\documentclass[reqno]{amsart}
\usepackage{amssymb, amsfonts}
\usepackage{enumerate}
\usepackage[usenames, dvipsnames]{color}
\usepackage{hyperref}

\usepackage{verbatim}

\numberwithin{equation}{section}

\newtheorem{theorem}{Theorem}[section]
\newtheorem{corollary}[theorem]{Corollary}
\newtheorem{lemma}[theorem]{Lemma}
\newtheorem{proposition}[theorem]{Proposition}

\newtheorem*{AssumpA'}{Assumption A$'$ $(\rho,\varepsilon)$}
\newtheorem*{AssumpA''}{Assumption A$''$ $(\rho,\varepsilon)$}

\theoremstyle{definition}
\newtheorem{remark}[theorem]{Remark}

\theoremstyle{definition}

\theoremstyle{definition}
\newtheorem{assumption}[theorem]{Assumption}

\makeatletter
\def\dashint{\operatorname%
{\,\,\text{\bf--}\kern-.98em\DOTSI\intop\ilimits@\!\!}}
\makeatother

\def\bC{\mathbb{C}}

\def\bR{\mathbb{R}}
\def\bZ{\mathbb{Z}}

\def\ff{\mathfrak{f}}
\def\fg{\mathfrak{g}}
\def\fh{\mathfrak{h}}

\def\cA{\mathcal{A}}
\def\cB{\mathcal{B}}

\def\cD{\mathcal{D}}

\def\cL{\mathcal{L}}
\def\cM{\mathcal{M}}

\def\cU{\mathcal{U}}

\begin{document}

\title[Stokes system]{Weighted $L_q$-estimates for stationary Stokes system with partially BMO coefficients}

\author[H. Dong]{Hongjie Dong}
\address[H. Dong]{Division of Applied Mathematics, Brown University, 182 George Street, Providence, RI 02912, USA}

\email{Hongjie\_Dong@brown.edu}

\thanks{H. Dong was partially supported by the NSF under agreement DMS-1056737 and DMS-1600593.}

\author[D. Kim]{Doyoon Kim}
\address[D. Kim]{Department of Mathematics, Korea University, 145 Anam-ro, Seongbuk-gu, Seoul, 02841, Republic of Korea}

\email{doyoon\_kim@korea.ac.kr}

\thanks{D. Kim was supported by Basic Science Research Program through the National Research Foundation of Korea (NRF) funded by the Ministry of Education (2016R1D1A1B03934369).}

\subjclass[2010]{35R05, 76N10, 76D07}

\keywords{Stokes system, Reifenberg flat domains, measurable coefficients, small mean oscillations, Muckenhoupt weights}

\begin{abstract}
We prove the unique solvability of solutions in Sobolev spaces to the stationary Stokes system on a bounded Reifenberg flat domain when the coefficients are partially BMO functions, i.e., locally they are merely measurable in one direction and have small mean oscillations in the other directions.
Using this result, we establish the unique solvability in Muckenhoupt type weighted Sobolev spaces for the system with partially BMO coefficients on a Reifenberg flat domain.
We also present weighted a priori $L_q$-estimates for the system when the domain is the whole Euclidean space or a half space.
\end{abstract}

\maketitle

\section{Introduction}

In this paper, we continue our study \cite{DK15S} on stationary Stokes system with rough coefficients, where we considered a system in Sobolev spaces whose coefficients are merely measurable in one direction, i.e., they can be very irregular (no regularity assumptions) in one direction.
The stationary Stokes system we consider contains a second-order divergence type operator with variable coefficients:
\begin{equation}
                                        \label{eq11.08}
\begin{cases}
\cL u + \nabla p = f + D_\alpha f_\alpha
\quad
&\text{in}\,\,\Omega,
\\
\operatorname{div} u = g
\quad
&\text{in}\,\,\Omega,
\end{cases}
\end{equation}
where $\Omega  \subseteq \bR^d$ and $\cL$ is defined by
\begin{equation}
							\label{eq0812_01}
\cL u = D_\alpha \left( A^{\alpha\beta} D_\beta u \right),
\quad A^{\alpha\beta} = [A^{\alpha\beta}_{ij}]_{i,j=1}^d
\end{equation}
for $\alpha,\beta =1,\ldots,d$. Throughout this paper, we use the Einstein summation convention on repeated indices.
The coefficients $A^{\alpha\beta}$, as functions of $x \in \bR^d$, are bounded and satisfy the strong ellipticity condition. See \eqref{eq1016_01}.

As mentioned in \cite{DK15S}, besides its mathematical interests, such a system is also partly motivated by the study of inhomogeneous fluids with density dependent viscosity (see, for instance, \cite{LS75, Li96, AGZ11}), as well as equations describing flows of shear thinning and shear thickening fluids with viscosity depending on pressure (see, for instance, \cite{FMR05, BMR07}). It also has a connection to the Navier-Stokes system in general Riemannian manifolds. See, for instance, \cite{DM04}. Since the coefficients are merely measurable in one direction, they may have jump discontinuities and hence, the system can be used to model, for example, the motion of two fluids with interfacial boundaries.

In \cite{DK15S}, we established a priori $L_q$-estimates, $q \in [2,\infty)$, for \eqref{eq11.08} when the domain $\Omega$ is the whole Euclidean space $\bR^d$ or a half space $
\bR^d_+$ under the assumption that $A^{\alpha\beta}$ are functions of only one variable with no regularity assumptions.
We also proved there an $L_q$-estimate, $q \in (1,\infty)$, and the unique solvability of \eqref{eq11.08} in Sobolev spaces when $\Omega$ is a bounded Lipschitz domain with a small Lipschitz constant.
In this case, the coefficients $A^{\alpha\beta}$ are assumed to be merely measurable in one direction, which is almost parallel to the normal direction near the boundary, and have small bounded mean oscillations (BMO) in the other directions.
This type of coefficients is called (variably) partially BMO coefficients and was first introduced in \cite{MR2540989}.
See Assumption \ref{assum0711_1}.
For other previous results on the Stokes system and discussions about the system with variable coefficients, we refer the reader to \cite{MR1313554, MR975121, MR641818, DK15S} and the references therein.

In this paper, we generalize the results of \cite{DK15S} in two respects.
First, we consider the stationary Stokes system defined on a more general domain, called a Reifenberg flat domain. See Theorem \ref{thm1}.
The solution spaces are standard Sobolev spaces (without weights) as in \cite{DK15S}.
In this study, we not only deal with more general domains, but also clearly identify the classes of coefficients as well as domains for the existence and uniqueness of solutions to the stationary Stokes system in Sobolev spaces.
As is shown, for instance, in \cite{MR1331981}, even the Poisson equation on a Lipschitz domain may not be solvable in $W_q^1(\Omega)$ with $q > 3$ ($q > 4$ if $d=2$) unless the boundary is sufficiently flat.
Likewise, if coefficients $A^{\alpha\beta}$ are in the class of partially BMO functions, the sizes of the mean oscillations of $A^{\alpha\beta}$ on small balls need to be sufficiently small.
Indeed, suppose that both quantities (the flatness and the mean oscillations) are bounded by a positive number $\rho$.
Then, in elliptic and parabolic cases with partially BMO coefficients on Reifenberg flat domains (see, for instance, \cite{DK15}),
there exists a unique solution to a given equation in a Sobolev space if $\rho$ is sufficiently small.
It is important that the size of $\rho$ is determined only by parameters such as the dimension, the ellipticity constant, and $q$ if solutions are to be found in a $L_q$-based Sobolev space.
In the case of the Stokes system, it is more involved to determine $\rho$ because of the divergence equation $\operatorname{div} u = g$.
Here, we clarify the dependence of $\rho$, which amounts to identifying possible coefficients and domains for our main results.
See Assumptions \ref{assum1004}, \ref{assum0711_1}, and Remark \ref{rem0116_1}.
It is worth noting that in a recent work \cite{CL15}, an a priori estimate of solutions was proved for the stationary Stokes system on a Reifenberg flat domain, and the solvability was mentioned for Lipschitz domains, when the coefficients have small mean oscillations with respect to {\em all} the variables.

Second, we extend the results in \cite{DK15S} to the framework of Sobolev spaces with weights (see Theorem \ref{thm3}), under the same regularity assumptions on the coefficients and the boundary of the domain as in the unweighted case (see Theorem \ref{thm1}).
We consider Muckenhoupt weights, $\omega \in A_q$, $q \in (1,\infty)$, which are defined on the same domain as the Stokes system.
See \eqref{eq0116_02}.
In a recent work \cite{BS16}, the authors studied the stationary Stokes system in Sobolev spaces having BMO coefficients as in \cite{CL15}.
They considered Muckenhoupt weights in $A_{q/2}$ with $q\in (2,\infty)$ defined in the whole Euclidean space.
Owing to the fact that $A_{q/2} \subsetneq A_q$ and considering the Hardy-Littlewood maximal function theorem with $A_q$ weights, our results are sharp in the sense that we allow $A_q$ weights in obtaining $L_q$-estimates for any $q\in (1,\infty)$.
Moreover, we consider weights defined on domains, which might be more general as the restriction of a Muckenhoupt weight in $\bR^d$ to a domain with an exterior measure condition, like a Reifenberg flat domain, results in a Muckenhoupt weight on the domain.
We remark that we also deal with the stationary Stokes system defined in the whole Euclidean space and on a half space using Muckenhoupt weights, while, as mentioned above, in \cite{DK15S} we only treated this case for $q \in [2,\infty)$ without weights.
See Theorem \ref{thm3}.

The regularity assumptions in this paper on coefficients and domains have also been considered in recent papers, \cite{MR2835999, DK15} for instance, on elliptic and parabolic equations/systems.
In particular, as far as coefficients are concerned, the assumption allowing coefficients to be merely measurable in one direction cannot be relaxed in view of the counterexamples about the unique solvability of elliptic equations in Sobolev spaces (see \cite{MR0159110, MR3266252}) when the coefficients are only measurable functions of two variables.
See \cite{DK15} and the references therein for a comprehensive study on elliptic and parabolic equations/systems in Sobolev space with Muckenhoupt weights.

To prove our main results, we take two different approaches to the unweighted and weighted cases.
When dealing with the stationary Stokes system in Sobolev spaces without weights, we use a level-set type of argument as used in \cite{MR2835999} to obtain a desired $L_q$-estimate.
In short, by measuring the level sets of a solution in $L_2$, we show that the solution is indeed in $L_q$, $q>2$, if the terms on the right-hand side of the system are in $L_q$.
Since the solution is not known to be in $L_q$ a priori, the main ingredient of the proof is a reverse H\"{o}lder's inequality, for which we utilize the $L_2$-estimate of the Stokes system accompanied by that of the divergence equation.
Thus, when determining the size of $\rho$ later (see Assumption \ref{assum0711_1}), we need to take into consideration the constant in the $L_2$-estimate for the divergence equation, which may carry some information about the domain of the system.
See the proof of Lemma \ref{lem0712_1} and Remark \ref{rem0118_1}.

For the proofs of the main results for Sobolev spaces with weights, we make use of the mean oscillation estimate approach presented, for instance, in \cite{MR2771670, MR2835999, DK15}.
As explained in \cite{MR2771670}, mean oscillation estimates are well suited to the perturbation argument for coefficients having small mean oscillations.
Moreover, as shown in \cite{DK15}, they are in an appropriate form to deal with $L_q$-estimates with Muckenhoupt weights via the Hardy-Littlewood maximal function theorem and the Fefferman-Stein theorem on sharp functions, which are valid for $L_q$ spaces with Muckenhoupt weights.
The mean oscillation estimates rely on Theorem \ref{thm1}, which is for the unweighted case. Hence, this cannot be considered as a simple consequence of our main results for the weighted case, in particular, Theorem \ref{thm2} with the weight $\omega \equiv 1$.

The remainder of this paper is organized as follows. We state the main results of this paper along with some notation description and assumptions in the following section.
In Section \ref{sec03}, we prove reverse H\"{o}lder's inequality for solutions in $L_2$ to the Stokes system with general coefficients.
In Section \ref{sec04}, we prove interior and boundary $L_\infty$ and H\"{o}lder estimates for derivatives of solutions when the system has coefficients measurable in one direction.
Using the results obtained in the previous sections, we prove the existence and uniqueness of solutions together with $L_q$-estimates to the Stokes system defined on a Reifenberg flat domain with partially BMO coefficients in Section \ref{sec05}.
The solution spaces here are Sobolev spaces without weights.
Finally, we devote Section \ref{sec06} to the proofs of Theorems \ref{thm3} and \ref{thm2}, which are for Sobolev spaces with Muckenhoupt weights.

\section{Main results}

Before we present our main results, we introduce some notation used throughout the paper and state the assumptions for our main theorems.
We fix a half space to be $\bR^d_+$, defined by
$$
\bR^d_+ = \{ x = (x_1, x') \in \bR^d: x_1 > 0,\, x' \in \bR^{d-1} \}.
$$
Let $B_r(x_0)$ be an Euclidean ball of radius $r$ in $\bR^d$ centered at $x_0\in \bR^d$, and let $B^+_r(x_0)$ be the half ball
$$
B_r^+(x_0) = B_r(x_0) \cap \bR^d_+.
$$
A ball in $\bR^{d-1}$ is denoted by
$$
B'_r(x') = \{ y' \in \bR^{d-1}: |x' - y'| < r\}.
$$
We use the abbreviations
$B_r := B_r(0)$, $B_r^+ := B_r^+(0)$ where $0\in \bR^d$, and $B_r' := B_r'(0)$ where $0 \in \bR^{d-1}$.
We also denote
$$
(f)_\cD = \frac{1}{|\cD|} \int_{\cD} f \, dx = \dashint_{\cD} f \, dx,
$$
$$
\Omega_r(x_0) = \Omega \cap B_r(x_0).
$$

Throughout the paper, the coefficients $A^{\alpha\beta}$ are assumed to be bounded and satisfy the strong ellipticity condition, i.e., there exists a constant $\delta \in (0,1)$ such that
\begin{equation}
							\label{eq1016_01}
|A^{\alpha\beta}| \le \delta^{-1},
\quad
\sum_{\alpha, \beta=1}^d \xi_\alpha \cdot A^{\alpha\beta} \xi_\beta \ge \delta \sum_{\alpha=1}^d |\xi_\alpha|^2
\end{equation}
for any $\xi_\alpha \in \bR^d$, $\alpha = 1, \ldots, d$.

We say that $(u,p)\in W_q^1(\Omega)^d\times L_q(\Omega)$ is a solution to \eqref{eq11.08} if we have 
$$
- \int_\Omega D_\alpha \psi \cdot A^{\alpha\beta} D_\beta u \, dx - \int_\Omega p \, \operatorname{div} \psi \, dx = \int_\Omega f \cdot \psi \, dx - \int_\Omega f_\alpha \cdot D_\alpha \psi \, dx
$$
for any $\psi = (\psi_1, \ldots, \psi_d) \in C_0^\infty(\Omega)^{d}$,
where
$$
\operatorname{div} \psi = D_1 \psi_1 + \ldots + D_d \psi_d
\quad
\text{and}
\quad
D_\alpha \psi = (D_\alpha \psi_1, D_\alpha \psi_2, \ldots, D_\alpha \psi_d).
$$

We assume that $\Omega$ is a Reifenberg flat domain in the following sense.

\begin{assumption}
							\label{assum1004}
There exists $R_0 \in (0,\infty)$ such that, for any $x_0 \in \partial \Omega$ and $0 < r \le R_0$,
there is a coordinate system depending on $x_0$ and $r$ such that in the new coordinate system we have
\begin{equation*}
%                                                    \label{eq1004_01}
\left\{ (y_1, y') : {x_0}_1 + \frac{r}{16} < y_1 \right\} \cap B_r(x_0) \subset \Omega_r(x_0)
\subset \left\{(y_1,y') : {x_0}_1 - \frac{r}{16} < y_1\right\} \cap B_r(x_0),
\end{equation*}
where ${x_0}_1$ is the first coordinate of $x_0$ in the new coordinate system.
\end{assumption}

Next, we state our assumption on the regularity of the coefficients $A^{\alpha\beta}$ combined with the flatness of $\partial\Omega$.
Let $\rho \in (0,1/16)$ be a small constant to be specified later.

\begin{assumption}[$\rho$]
							\label{assum0711_1}
There exists $R_1 \in (0,R_0]$ satisfying the following.

\noindent
(i) For $x_0 \in \Omega$ and $0 < r \le \min\{R_1, \operatorname{dist}(x_0, \partial\Omega)\}$, there is a coordinate system depending on $x_0$ and $r$ such that in this new coordinate system we have
\begin{equation}
							\label{eq0307_01}
\dashint_{B_r(x_0)} \Big| A^{\alpha\beta}(x_1,x') - \dashint_{B_r'(x_0')} A^{\alpha\beta}(x_1,z') \, dz' \Big| \, dx \le \rho.
\end{equation}

\noindent
(ii) For any $x_0 \in \partial \Omega$ and $0 < r \le R_1$,
there is a coordinate system depending on $x_0$ and $r$ such that in the new coordinate system \eqref{eq0307_01} holds, and
\begin{equation*}
                %                                    \label{eq1.38}
\{ (y_1, y') : {x_0}_1 + \rho r < y_1 \} \cap B_r(x_0) \subset \Omega_r(x_0)
\subset \{(y_1,y') : {x_0}_1 - \rho r < y_1\} \cap B_r(x_0),
\end{equation*}
where ${x_0}_1$ is the first coordinate of $x_0$ in the new coordinate system.
\end{assumption}

\begin{remark}
							\label{rem0116_1}
Clearly, Assumption \ref{assum0711_1} is stronger than Assumption \ref{assum1004}.
The reason we state these two assumptions separately is the following.
Owing to the role of the divergence equation in Stokes systems, in the main theorems below, $\rho$ is determined by a set of parameters including the boundary flatness and $R_0$ in Assumption \ref{assum1004}.
Without Assumption \ref{assum1004}, the size of $\rho$ needs to be chosen only using the information in Assumption \ref{assum0711_1}, which may lead to a circular reasoning.
For instance, even if $A^{\alpha\beta}$ are uniformly continuous, they may not satisfy \eqref{eq0307_01} for a pair of $R_1$ and $\rho$ if $\rho$ is given by $R_1$.
With two assumptions as above, uniformly continuous coefficients satisfy the condition \eqref{eq0307_01} for any given $\rho$ by choosing a sufficiently small $R_1$.
\end{remark}

Our first result is the $L_q$-estimate for the Stokes system.
\begin{theorem}
							\label{thm1}
Let $q ,q_1\in (1,\infty)$ satisfy $q_1 \ge  q d/(q+d)$, $K > 0$, and let $\Omega$ be bounded with $\operatorname{diam}\Omega \le K$.
Then, there exists a constant $\rho = \rho(d,\delta, R_0, K,q) \in (0,1/16)$ such that, under Assumptions \ref{assum1004} and \ref{assum0711_1} $(\rho)$, for $(u,p) \in  W_q^1(\Omega)^d \times L_q(\Omega)$
satisfying $(p)_\Omega = 0$ and
\begin{equation}
							\label{eq0307_04}
\begin{cases}
\cL u + \nabla p = f + D_\alpha f_\alpha
\quad
&\text{in}\,\,\Omega,
\\
\operatorname{div} u = g
\quad
&\text{in}\,\,\Omega,
\\
u = 0
\quad
&\text{on} \,\, \partial \Omega,
\end{cases}
\end{equation}
where $f \in L_{q_1}(\Omega)$, $f_\alpha, g \in L_q(\Omega)$, we have \begin{equation}
							\label{eq0328_02}
\|Du\|_{L_q(\Omega)} + \|p\|_{L_q(\Omega)} \le N \left( \|f\|_{L_{q_1}(\Omega)} + \|f_\alpha\|_{L_q(\Omega)} + \|g\|_{L_q(\Omega)} \right),
\end{equation}
where $N>0$ is a constant depending only on $d$, $\delta$, $R_0$, $R_1$, $K$, $q$, and $q_1$.
Moreover, for $f \in L_{q_1}(\Omega)$, $f_\alpha, g \in L_q(\Omega)$ with $(g)_\Omega = 0$, there exists a unique $(u,p) \in W_q^1(\Omega)^d \times L_q(\Omega)$ satisfying $(p)_\Omega = 0$ and \eqref{eq0307_04}.
\end{theorem}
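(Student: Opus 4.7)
The plan is to establish \eqref{eq0328_02} first as an a priori estimate via a level-set / good-$\lambda$ argument of Caffarelli--Peral type, and then to deduce existence and uniqueness by the method of continuity. As a base case I would settle $q=2$ by combining the Lax--Milgram lemma on the divergence-free subspace with the Bogovskii-type solvability of $\operatorname{div} v = g - (g)_\Omega$ in $W_2^1(\Omega)$ with zero boundary values, yielding $(u,p) \in W_2^1 \times L_2$ with $(p)_\Omega = 0$ satisfying an $L_2$-estimate whose constant depends on the norm of the Bogovskii operator on $\Omega$. It is through this constant that the global parameters $R_0$ from Assumption~\ref{assum1004} and the diameter bound $K$ enter the eventual choice of $\rho$.

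For general $q$, I would argue for $q > 2$ and then treat $q \in (1,2)$ by duality against the adjoint Stokes system, which has the same divergence structure. Given $(u,p) \in W_q^1 \times L_q$, the strategy is to control the distribution function of $\cM(|Du|^2 + |p|^2)$. On each ball $B_r(x_0)$, interior or resting against a nearly flat boundary patch, I would decompose $(u,p) = (v,\pi) + (w,\sigma)$, where $(v,\pi)$ solves the Stokes system with coefficients $\bar A^{\alpha\beta}(x_1)$ frozen in the tangential variables, vanishing Dirichlet data on the flat boundary piece, and the homogeneous right-hand side that $(u,p)$ would have if the coefficient perturbation and data were discarded. For $(v,\pi)$ the interior and boundary $L_\infty$/H\"older estimates for $Dv$ proved in Section~\ref{sec04} apply. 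For the perturbation $(w,\sigma)$, an $L_2$ energy estimate together with the reverse H\"older inequality of Section~\ref{sec03} absorbs the smallness of $\rho$ from Assumption~\ref{assum0711_1} as a gain factor. A Vitali covering then yields a good-$\lambda$ inequality for $\cM(|Du|^2 + |p|^2)$, whose integration against $\lambda^{q/2-1}$ produces \eqref{eq0328_02}.

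With the a priori estimate in hand, existence and uniqueness follow by the method of continuity applied to the family $A_t^{\alpha\beta} = (1-t)A^{\alpha\beta} + t\delta^{\alpha\beta}I$, $t \in [0,1]$: each $A_t^{\alpha\beta}$ satisfies Assumption~\ref{assum0711_1}$(\rho)$ with the same $\rho$ and the same constant in \eqref{eq0328_02}, and at $t = 1$ the constant-coefficient Stokes system is classically solvable in $W_q^1 \times L_q$ on a Reifenberg flat domain once $\rho$ is sufficiently small; uniqueness is immediate from the estimate applied to the difference.

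The hardest step, as flagged in Remark~\ref{rem0116_1} and the introduction, is the interplay between the pressure and the divergence constraint in the localization: to freeze coefficients on a small ball while preserving $\operatorname{div} u = g$ one must invoke a Bogovskii correction, whose norm depends on the geometry of the localized subdomain. Controlling this correction uniformly through the reverse H\"older step is precisely what forces $\rho$ to be calibrated against the constants coming from Assumption~\ref{assum1004} and from $K$, rather than from the purely local data of Assumption~\ref{assum0711_1}. This coupling is the main novelty over the analogous elliptic/parabolic story.
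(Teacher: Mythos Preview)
Your outline for the a priori estimate is essentially the paper's: the level-set argument for $q>2$ built on the decomposition into a frozen-coefficient piece (controlled by the $L_\infty$ estimates of Section~\ref{sec04}) and a small perturbation (controlled via the $L_2$ estimate plus the reverse H\"older inequality of Section~\ref{sec03}), followed by duality against the adjoint system for $q\in(1,2)$. Your remarks about where $R_0$ and $K$ enter through the Bogovskii constant are also exactly right.

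The difference, and a genuine gap in your proposal, is the existence argument. You invoke the method of continuity with endpoint $t=1$ the Laplacian Stokes system, asserting it is ``classically solvable in $W_q^1\times L_q$ on a Reifenberg flat domain once $\rho$ is sufficiently small.'' But $W_q^1\times L_q$ solvability of even the constant-coefficient Stokes system on a Reifenberg flat domain for $q\neq 2$ is not elementary; it requires precisely the machinery you are building (or the parallel results of \cite{CL15}), so the argument is circular or at best leans on an external result of the same depth as the theorem itself.

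The paper sidesteps this entirely. For $q>2$, the level-set argument is run not as an a priori estimate on an assumed $W_q^1\times L_q$ solution, but on the $W_2^1\times L_2$ solution provided by Lemma~\ref{lem0225_1}: one shows directly that this $L_2$ solution actually lies in $W_q^1\times L_q$ with the desired bound. The absorption step in the good-$\lambda$ inequality is legitimate because $\int_0^S|\cA(s)|s^{q-1}\,ds$ is finite for each $S$ simply since $|\Omega|<\infty$. Thus regularity and existence come in a single stroke, with no endpoint solvability needed. For $q\in(1,2)$, after the duality a priori estimate, existence is obtained by truncating $f_\alpha$ and $g$ to be bounded, solving in $W_2^1\times L_2\subset W_q^1\times L_q$ (bounded $\Omega$), and passing to the limit via weak compactness. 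No continuity method appears anywhere.
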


\begin{remark}
							\label{rem0918_1}
Theorem \ref{thm1} states that the choice of $\rho$ depends not only on $d$, $\delta$, and $q$, but also on $R_0$ and $K$.
Indeed, in the proof of Theorem \ref{thm1} the constant $\rho$ is determined by a set of parameters including $K_1$. The latter, in turn, is given by $d$, $R_0$, and $K$ through the $L_2$-estimate for divergence equations on John domains.
See Assumption \ref{assum0224_1} and Remark \ref{rem0229_2} below.
\end{remark}

We also consider the Stokes system in weighted spaces with Muckenhoupt weights. To this end, we introduce some additional notation. For any $q\in (1,\infty)$, let $A_q=A_q(\Omega)$ be the set of all nonnegative $L_{1,\text{loc}}$ functions $\omega$ on $\Omega$ such that
\begin{equation}
							\label{eq0116_02}
[\omega]_{A_q}:=\sup_{x_0\in \Omega,r>0}\left(\dashint_{\Omega_r(x_0)}\omega(x)\,dx\right)
\left(\dashint_{\Omega_r(x_0)}\big(\omega(x)\big)^{-1/(q-1)}\,dx\right)^{q-1}<\infty.
\end{equation}
We write $f \in L_{q,\omega}(\Omega)$ if
$$
\int_\Omega |f|^q \omega \, dx < \infty.
$$
We also use $\omega(\cdot)$ to denote the measure $\omega(dx)=\omega\,dx$, i.e., for $A \subset \Omega$,
\begin{equation*}
%							\label{eq0316_1}
\omega(A) = \int_A \omega(x) \, dx.
\end{equation*}
We use the following weighted Sobolev spaces,
$$
W_{q,\omega}^1(\Omega) = W_q^1(\Omega, \omega \, d x) = \{ u : u, Du \in L_{q,\omega}(\Omega) \}.
$$

The next theorem is a generalization of \cite[Theorem 2.1]{DK15S}, in which a unweighted $L_q$-estimate with $q\ge 2$ was obtained.

\begin{theorem}
							\label{thm3}
Let $q \in (1,\infty)$, $\Omega$ be either $\bR^d$ or $\bR^d_+$, $A^{\alpha\beta} = A^{\alpha\beta}(x_1)$, and $\omega\in A_q(\Omega)$.
If $(u,p) \in W_{q,\omega}^1(\Omega)^d \times L_{q,\omega}(\Omega)$ satisfies
\begin{equation*}
\begin{cases}
\cL u + \nabla p = D_\alpha f_\alpha
\quad
&\text{in}\,\,\Omega,
\\
\operatorname{div} u = g
\quad
&\text{in}\,\,\Omega,
\\
u = 0
\quad
&\text{on} \,\, \partial \Omega \quad \text{in case} \,\, \Omega = \bR^d_+,
\end{cases}
\end{equation*}
where $f_\alpha, g \in L_{q,\omega}(\Omega)$,
then we have
\begin{equation}
							\label{eq0307_03}
\|Du\|_{L_{q,\omega}(\Omega)} + \|p\|_{L_{q,\omega}(\Omega)} \le N \left( \|f_\alpha\|_{L_{q,\omega}(\Omega)} + \|g\|_{L_{q,\omega}(\Omega)} \right),
\end{equation}
where $N=N(d,\delta,q,[\omega]_{A_q})$.
\end{theorem}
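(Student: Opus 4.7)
The plan is to carry out the mean-oscillation / sharp-function scheme advertised in the introduction: the unweighted input is Theorem \ref{thm1}, the pointwise regularity input comes from the interior and boundary $L_\infty$/Hölder estimates of Section \ref{sec04}, and the passage to the weighted $L_q$ setting is made via the Fefferman--Stein sharp-function inequality together with the Hardy--Littlewood maximal-function theorem for Muckenhoupt weights. Since $\omega \in A_q$, by the open-endedness of the $A_q$ class I can pick $q_0 \in (1,q)$ with $\omega \in A_{q/q_0}$; all pointwise oscillation estimates below are performed at the level of $L_{q_0}$-averages, and only at the end are they upgraded to weighted $L_q$-norms.

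Fix $x_0 \in \overline{\Omega}$, $r>0$, and a large parameter $\kappa \ge 2$ to be chosen, and set $\hat B = B_{\kappa r}(x_0)\cap\Omega$. I decompose $(u,p) = (v,\pi) + (w,\theta)$, where $(v,\pi)$ solves the Stokes system on a sufficiently large bounded subdomain of $\Omega$ containing $\hat B$, with zero Dirichlet data, right-hand side $D_\alpha(1_{\hat B} f_\alpha)$, and divergence datum $1_{\hat B}(g - (g)_{\hat B})$ (the subtraction of the mean makes the datum compatible with the compatibility condition of Theorem \ref{thm1}). Applied at exponent $q_0$, Theorem \ref{thm1} gives
\begin{equation*}
\|Dv\|_{L_{q_0}(\Omega)} + \|\pi\|_{L_{q_0}(\Omega)} \le N\left(\|f_\alpha\|_{L_{q_0}(\hat B)} + \|g\|_{L_{q_0}(\hat B)}\right).
\end{equation*}
The remainder $(w,\theta) := (u-v,\,p-\pi)$ then solves on $\hat B$ (with zero Dirichlet data on $\partial\Omega\cap\hat B$ when $\Omega=\bR^d_+$) the homogeneous system $\cL w + \nabla\theta = 0$, $\operatorname{div} w = 0$ whose coefficients depend only on $x_1$. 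The interior/boundary estimates of Section \ref{sec04} then yield a decay estimate of the form
\begin{equation*}
\left(\dashint_{B_r(x_0)\cap\Omega} |Dw - a|^{q_0} + |\theta - b|^{q_0}\right)^{1/q_0} \le N\kappa^{-\mu}\left(\dashint_{\hat B} |Dw|^{q_0} + |\theta-c|^{q_0}\right)^{1/q_0}
\end{equation*}
for some $\mu>0$ and appropriate constants $a,b,c$ (the pressure being determined only up to additive constants).

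Combining the two displayed estimates via the triangle inequality yields a pointwise sharp-function bound of the form
\begin{equation*}
(Du)^{\sharp}(x_0) + p^{\sharp}(x_0) \le N\kappa^{-\mu}\,\cM\!\left(|Du|^{q_0}+|p|^{q_0}\right)(x_0)^{1/q_0} + N\kappa^{d/q_0}\,\cM\!\left(|f_\alpha|^{q_0}+|g|^{q_0}\right)(x_0)^{1/q_0},
\end{equation*}
where $\cM$ is the Hardy--Littlewood maximal operator. Taking $L_{q,\omega}$-norms, using the weighted Fefferman--Stein sharp-function inequality (valid since $\omega\in A_q$) to dominate $\|Du\|_{L_{q,\omega}}+\|p\|_{L_{q,\omega}}$ by the sharp-function side, applying the weighted Hardy--Littlewood maximal inequality on $L_{q/q_0,\omega}$ (valid since $\omega\in A_{q/q_0}$) to handle the $\cM$-terms, and finally choosing $\kappa$ large enough to absorb the $\kappa^{-\mu}$ term into the left-hand side, one arrives at \eqref{eq0307_03}.

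The main obstacle I anticipate is treating the pressure $p$ on an equal footing with $Du$. In the elliptic/parabolic analogue of \cite{DK15} only the sharp function of $Du$ has to be controlled, whereas the Stokes system couples $p$ to $Du$ through the divergence equation, and $p$ is defined only up to additive constants; the decomposition $p=\pi+\theta$ and the subtraction of means must be arranged so that this indeterminacy does not contaminate the estimates. A secondary technical point is that Theorem \ref{thm1} is stated only for bounded domains, so the auxiliary problem for $(v,\pi)$ must be solved on a sufficiently large bounded subdomain containing $\hat B$; the dependence of the constant in Theorem \ref{thm1} on the diameter is harmless here because $(v,\pi)$ enters only through its $L_{q_0}(\hat B)$-size, so any fixed choice of diameter works uniformly in $x_0$ and $r$.
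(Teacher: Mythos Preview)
Your overall architecture---choose $q_0<q$ with $\omega\in A_{q/q_0}$, decompose $(u,p)$ locally into a homogeneous part and a controlled inhomogeneous part, prove a mean-oscillation bound, then invoke weighted sharp-function and maximal inequalities with an absorption in $\kappa$---matches the paper's. But there is a real gap exactly at the step you flagged as the main obstacle and did not resolve.

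The claimed decay
\[
\Big(\dashint_{B_r(x_0)\cap\Omega} |Dw - a|^{q_0} + |\theta - b|^{q_0}\Big)^{1/q_0} \le N\kappa^{-\mu}\Big(\dashint_{\hat B} |Dw|^{q_0} + |\theta-c|^{q_0}\Big)^{1/q_0}
\]
does \emph{not} follow from Section~\ref{sec04}. Since the coefficients are merely measurable in $x_1$, the estimates there (Lemmas~\ref{lem0829_2}--\ref{lem0829_3}) give H\"older continuity only for $D_{x'}w$ and for the combination $W=A^{1\beta}D_\beta w+(\theta,0,\dots,0)^{\operatorname{tr}}$. The remaining derivatives $D_1 w_j$ (for $j\ge 2$) and the pressure $\theta$ are in general discontinuous: think of a scalar model $D_1(a(x_1)D_1 w)+\Delta_{x'}w=0$, where $a(x_1)D_1 w$ is smooth but $D_1 w$ inherits the jumps of $a^{-1}$. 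Consequently there is no exponent $\mu>0$ for which the oscillation of $Dw$ or of $\theta$ decays like $\kappa^{-\mu}$, and the sharp-function bound for $(Du)^\sharp+p^\sharp$ cannot be derived as written.

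The paper's resolution is to replace $(Du,p)$ by the auxiliary vector $\cU=(D_{x'}u,\operatorname{div} u,U_1,\dots,U_d)$ of \eqref{eq10.40}, built from exactly those pieces that \emph{do} enjoy H\"older control for the homogeneous part. The point is the two-sided pointwise comparison $N^{-1}(|Du|+|p|)\le |\cU|\le N(|Du|+|p|)$, so that an oscillation estimate for $\cU$ (Lemma~\ref{lem6.4} with $\rho=0$, $R_1=\infty$) still yields control of $\|Du\|_{L_{q,\omega}}+\|p\|_{L_{q,\omega}}$. Because the function whose oscillation is measured is not $|Du|+|p|$ itself, the standard Fefferman--Stein inequality is not directly applicable; the paper instead invokes Lemma~\ref{lem6.5}, which allows a cube-dependent surrogate $\ff^Q$ sandwiched between $N^{-1}(|Du|+|p|)$ and $N(|Du|+|p|)$. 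If you want to salvage your write-up, you must insert this $\cU$-substitution and either quote Lemma~\ref{lem6.5} or prove an equivalent version of the sharp-function inequality accommodating such surrogates.

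Two minor points: your remainder satisfies $\operatorname{div} w=(g)_{\hat B}$, a constant, not $0$ (this is harmless since Section~\ref{sec04} allows $\operatorname{div}=\ell$); and the use of Theorem~\ref{thm1} on a bounded auxiliary domain is handled in the paper by Lemmas~\ref{lem0918_1}--\ref{lem0918_2}, where a scaling argument makes the constant independent of $r$.
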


\begin{theorem}
							\label{thm2}
Let $q\in (1,\infty)$ and $K > 0$ be constants, $\Omega$ be bounded with $\operatorname{diam}\Omega \le K$, and $\omega\in A_q(\Omega)$.
Then, there exists a constant 
$$
\rho = \rho(d,\delta,R_0, K,q,[\omega]_{A_q}) \in (0,1/16)
$$
such that, under Assumptions \ref{assum1004} and \ref{assum0711_1} $(\rho)$, for $(u,p) \in  W_{q,\omega}^1(\Omega)^d \times L_{q,\omega}(\Omega)$
satisfying $(p)_\Omega = 0$ and
\begin{equation}
							\label{eq11.28}
\begin{cases}
\cL u + \nabla p = D_\alpha f_\alpha
\quad
&\text{in}\,\,\Omega,
\\
\operatorname{div} u = g
\quad
&\text{in}\,\,\Omega,
\\
u = 0
\quad
&\text{on} \,\, \partial \Omega,
\end{cases}
\end{equation}
where $f_\alpha, g \in L_{q,\omega}(\Omega)$, we have
\begin{equation}
							\label{eq11.32}
\|Du\|_{L_{q,\omega}(\Omega)} + \|p\|_{L_{q,\omega}(\Omega)} \le N \left( \|f_\alpha\|_{L_{q,\omega}(\Omega)} + \|g\|_{L_{q,\omega}(\Omega)} \right),
\end{equation}
where $N>0$ is a constant depending only on $d$, $\delta$, $R_0$, $R_1$, $K$, $q$, $[\omega]_{A_q}$, and $\omega(\Omega)$.
Moreover, for $f_\alpha, g \in L_{q,\omega}(\Omega)$ with $(g)_\Omega = 0$, there exists a unique $(u,p) \in W_{q,\omega}^1(\Omega)^d \times L_{q,\omega}(\Omega)$ satisfying $(p)_\Omega = 0$ and \eqref{eq11.28}.
\end{theorem}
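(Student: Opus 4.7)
The strategy for Theorem \ref{thm2} is to derive the a priori weighted estimate \eqref{eq11.32} by a mean-oscillation/maximal-function argument that reduces, after localization, to the unweighted solvability result of Theorem \ref{thm1}. Existence and uniqueness then follow by approximating $(f_\alpha, g)$ by smoother data, solving via Theorem \ref{thm1}, and passing to the weighted limit using the a priori estimate.

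First, for $(u,p) \in W_{q,\omega}^1(\Omega)^d \times L_{q,\omega}(\Omega)$ satisfying \eqref{eq11.28}, I would establish a pointwise mean-oscillation estimate at an arbitrary $x_0 \in \overline{\Omega}$, on scales $r \le R_1$ and with a small parameter $\kappa \in (0,1/4)$, of the schematic form
\begin{equation*}
\Bigl(\bigl|Du - (Du)_{\Omega_{\kappa r}(x_0)}\bigr| + \bigl|p - (p)_{\Omega_{\kappa r}(x_0)}\bigr|\Bigr)_{\Omega_{\kappa r}(x_0)} \le N\bigl(\kappa^{\alpha_0} + \kappa^{-d/\nu}\rho^{\alpha_1}\bigr)\bigl(\cM_\Omega(|Du|^\nu + |p|^\nu)(x_0)\bigr)^{1/\nu} + N\kappa^{-d/\nu}\bigl(\cM_\Omega(|f_\alpha|^\nu + |g|^\nu)(x_0)\bigr)^{1/\nu}
\end{equation*}
for some $\nu \in (1,q)$ and positive $\alpha_0, \alpha_1$. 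As in \cite{MR2771670, MR2835999, DK15}, this is produced by decomposing $(u,p) = (w,\pi) + (v,\fq)$ on $\Omega_r(x_0)$, where $(w,\pi)$ solves an auxiliary Dirichlet problem with right-hand sides carrying $f_\alpha - \overline{f}_\alpha$ and $g - \overline{g}$, for which Theorem \ref{thm1} (applied at exponent $\nu$ on a bounded domain of diameter $\sim r$) yields the $L_\nu$-control, while $(v,\fq)$ satisfies a homogeneous Stokes system with coefficients replaced by their tangential averages, for which the interior/boundary $L_\infty$ and H\"older estimates of Section \ref{sec04} produce the factor $\kappa^{\alpha_0}$. The Reifenberg flatness (Assumption \ref{assum1004}) is used to pass from the curved boundary to the half-ball model, and the partially BMO hypothesis \eqref{eq0307_01} supplies the factor $\rho^{\alpha_1}$ after comparing the frozen and original coefficients.

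With this oscillation estimate in hand, the weighted $L_q$-bound \eqref{eq11.32} follows by combining the Fefferman--Stein sharp function theorem with the Hardy--Littlewood maximal function theorem in $L_{q,\omega}$ (both valid for $A_q$ weights), then choosing first $\kappa$ small and next $\rho$ small enough to absorb the $\cM_\Omega(|Du|^\nu + |p|^\nu)$-term into the left-hand side. The residual lower-order contribution $\|u\|_{L_{q,\omega}} + \|p\|_{L_{q,\omega}}$ that appears is eliminated via Poincar\'e's inequality (using $u|_{\partial\Omega} = 0$), the zero-mean constraint $(p)_\Omega = 0$, and a standard duality/Bogovskii-type argument based on the solvability of the divergence equation on $\Omega$. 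The main obstacle will be producing a mean-oscillation bound that genuinely controls the pressure, since $p$ carries no derivative estimate; this forces one to carry the divergence constraint $\operatorname{div} u = g$ through the entire decomposition, in particular near the boundary, where $g - \overline{g}$ must be split into a piece absorbed into the divergence structure of an interior correction and a piece compatible with the Dirichlet auxiliary problem.

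For uniqueness, the estimate \eqref{eq11.32} applied to the difference of two solutions is immediate. For existence, approximate $(f_\alpha, g) \in L_{q,\omega}(\Omega)$ by bounded smooth functions $(f_\alpha^{(n)}, g^{(n)})$ with $(g^{(n)})_\Omega = 0$; since $\Omega$ is bounded, these lie in $L_{q_0}(\Omega)$ for any $q_0 > 1$, so Theorem \ref{thm1} furnishes unweighted solutions $(u^{(n)}, p^{(n)}) \in W_{q_0}^1(\Omega)^d \times L_{q_0}(\Omega)$. Since $A_q$ weights satisfy a reverse H\"older inequality and $\omega(\Omega) < \infty$, choosing $q_0$ large enough embeds these solutions into the weighted space, and the weighted a priori estimate applied to the differences $(u^{(n)} - u^{(m)}, p^{(n)} - p^{(m)})$ yields a Cauchy sequence whose limit is the desired weighted solution of \eqref{eq11.28}.
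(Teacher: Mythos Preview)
Your overall architecture---mean-oscillation estimate, weighted Fefferman--Stein and maximal function theorems, absorption by choosing $\kappa$ then $\rho$, and existence by approximation---matches the paper. But the displayed oscillation inequality for $(Du,p)$ is not attainable under the partially BMO hypothesis, and this is the crux you are missing. When the coefficients are merely measurable in $x_1$, the homogeneous piece $(v,\fq)$ satisfying $\cL_0 v+\nabla\fq=0$ with $\operatorname{div} v=\text{const}$ does \emph{not} enjoy H\"older regularity of $D_1 v$ or of $\fq$ separately; Section~\ref{sec04} only yields H\"older estimates for $D_{x'}v$ and for the combination $V=\bar A^{1\beta}D_\beta v+(\fq,0,\ldots,0)^{\operatorname{tr}}$. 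So your schematic bound with the factor $\kappa^{\alpha_0}$ on $|Du-(Du)_{\Omega_{\kappa r}}|+|p-(p)_{\Omega_{\kappa r}}|$ simply fails. The obstacle you flag (``$p$ carries no derivative estimate'') is real but misdiagnosed: it is not resolved by pushing the divergence constraint through the decomposition, but by replacing $(Du,p)$ with the vector $\cU=(D_{x'}u,\operatorname{div} u,U_1,\ldots,U_d)$ of \eqref{eq10.40}, which is pointwise comparable to $|Du|+|p|$ (see \eqref{eq10.54}, \eqref{eq0919_01}) and for which the H\"older estimate on the homogeneous part does hold.

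There is a second subtlety you do not address: the coordinate system---hence the quantity $\cU$ itself---changes with the point $x_0$ and the scale $r$ (it is chosen via Assumption~\ref{assum0711_1}). Consequently the ordinary Fefferman--Stein theorem does not apply, because there is no single function whose sharp function you are bounding. The paper's Lemma~\ref{lem6.5} is tailored to this: it allows a different $\ff^Q$ on each dyadic piece $Q$, subject only to the pointwise sandwich $\ff\le\ff^Q\le\fh$, and outputs an $L_{q,\omega}$ bound on $\ff$. Finally, the residual term that appears is $\||Du|+|p|\|_{L_1(\Omega)}$ (from the large-scale regime $r>R_1/\kappa$), and it is eliminated by invoking the \emph{unweighted} Theorem~\ref{thm1} at an exponent $q_0<q$ together with the embedding $L_{q,\omega}\hookrightarrow L_{q_0}$, not by Poincar\'e or Bogovskii as you suggest.
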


\begin{remark}
A mixed norm version of the results in Theorem \ref{thm2} can be easily derived from Theorem \ref{thm2} and a version of the Rubio de Francia extrapolation theorem.
See \cite{DK15}.
\end{remark}

\section{Reverse H\"{o}der's inequality}
							\label{sec03}

Note that in this section we impose no regularity assumptions on the coefficients $A^{\alpha\beta}$ of the elliptic operator $\cL$ in \eqref{eq0812_01}.

\begin{assumption}
							\label{assum0224_1}
There exists a constant $K_1 > 0$ such that, for any $g \in L_{2}(\Omega)$ with $\int_{\Omega} g \, dx = 0$, there exists $B g \in \mathring{W}_2^1(\Omega)^d$ satisfying
\begin{equation}
							\label{eq0711_00}
\operatorname{div} Bg = g
\quad
\text{in}
\,\,
\Omega,
\quad
\|D(Bg)\|_{L_2(\Omega)} \le K_1 \|g\|_{L_2(\Omega)}.	
\end{equation}
\end{assumption}

\begin{remark}
							\label{rem0229_1}
If $\Omega = B_R$ or $\Omega = B_R^+$, it follows from a scaling argument that the constant $K_1$ depends only on the dimension $d$.
The same result holds when $L_2$ is replaced by $L_q$ with $q \in (1,\infty)$, in which case the constant also depends on $q$.
\end{remark}

\begin{remark}
							\label{rem0229_2}
If $\Omega$ is a bounded Reifenberg flat domain that satisfies Assumption \ref{assum1004}, then the domain is also a John domain.
From the result in \cite{MR2263708}, we know that the domain $\Omega$ satisfies Assumption \ref{assum0224_1} with a constant $K_1$ depending only on $d$, $R_0$, and $\operatorname{diam}(\Omega)$.
Indeed, for a bounded domain $\Omega$, we have
$$
\text{Reifenberg flat domains} \subset \text{NTA domains} \subset \text{uniform domains} \subset \text{John domains},
$$
where the first inclusion is proved in \cite[Theorem 3.1]{MR1446617}, and the second and third inclusions follow from the definitions.
For the definition of NTA domains, see \cite[Definition 2.2]{MR1446617} and \cite[Definition 2.3]{MR2135732}.
For (different versions of) definitions of uniform domains (or 1-sided NTA domains), see \cite[Definition 2.2]{MR2135732} and \cite[Definitions 2.12 and 2.14]{AHMNT2014}.
In particular, the equivalence of the two different definitions of uniform domains is explained in the paragraph following \cite[Definition 2.14]{AHMNT2014}.
The inclusion of uniform domains into John domains follows readily if one uses the definitions in \cite[Definitions 2.1 and 2.2]{MR2135732}, where Definition 2.1 is that of John domains.
\end{remark}

Note that, in the following two lemmas, we only impose Assumption \ref{assum0224_1} on the domain.

\begin{lemma}
							\label{lem0225_1}
Let $q_1 \in (1,\infty)$ with $q_1 \ge 2d/(d+2)$, $\Omega \subset \bR^d$ be a bounded domain with $\operatorname{diam} \Omega \le K$ satisfying Assumption \ref{assum0224_1}, and $f \in L_{q_1}(\Omega)$, $f_\alpha, g \in L_2(\Omega)$ with $(g)_\Omega = 0$.
Then, there exists a unique $(u,p) \in W_2^{1}(\Omega)^d \times L_2(\Omega)$ with $(p)_\Omega = 0$ satisfying
$$
\begin{cases}
\cL u + \nabla p = f + D_\alpha f_\alpha
\quad
&\text{in}\,\,\Omega,
\\
\operatorname{div} u = g
\quad
&\text{in}\,\,\Omega,
\\
u = 0\quad &\text{on}\,\,\partial \Omega.
\end{cases}
$$
Moreover, we have
$$
\|Du\|_{L_2(\Omega)} + \|p\|_{L_2(\Omega)} \le N_1  \left( \|f_\alpha\|_{L_2(\Omega)} + \|g\|_{L_2(\Omega)} \right) + N_2 \|f\|_{L_{q_1}(\Omega)},
$$
where $N_1 = N_1(d,\delta, K_1)$ and $N_2 = N_2(d,\delta, q_1, K_1, K)$.
If $q_1=2$, $\Omega = B_R(x_0)$, $x_0 \in \bR^d$, or $\Omega = B_R(x_0) \cap \bR^d_+$, $x_0 \in \partial \bR^d_+$,
then we have
$$
\|Du\|_{L_2(\Omega)} + \|p\|_{L_2(\Omega)} \le N \left( R\|f\|_{L_2(\Omega)} + \|f_\alpha\|_{L_2(\Omega)} + \|g\|_{L_2(\Omega)} \right),
$$
where $N=N(d,\delta)$.
\end{lemma}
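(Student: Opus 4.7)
The plan is a three-step argument: reduce to the divergence-free case via the Bogovskii-type operator from Assumption \ref{assum0224_1}, solve the reduced problem by Lax--Milgram on the space of divergence-free $\mathring{W}_2^1$ fields, and recover the pressure through a duality argument again based on the Bogovskii operator.

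First, given $g$ with $(g)_\Omega = 0$, I set $u = v + Bg$, so that $\operatorname{div} v = 0$ and $v \in \mathring{W}_2^1(\Omega)^d$. The system becomes
\[
\cL v + \nabla p = f + D_\alpha \tilde f_\alpha, \qquad \tilde f_\alpha := f_\alpha - A^{\alpha\beta} D_\beta(Bg),
\]
with $\|\tilde f_\alpha\|_{L_2(\Omega)} \le \|f_\alpha\|_{L_2(\Omega)} + \delta^{-1} K_1 \|g\|_{L_2(\Omega)}$ by \eqref{eq1016_01} and \eqref{eq0711_00}. Testing against divergence-free $w \in \mathring{W}_2^1(\Omega)^d$ eliminates the pressure and leaves
\[
\int_\Omega A^{\alpha\beta} D_\beta v \cdot D_\alpha w\,dx = \int_\Omega f\cdot w\,dx - \int_\Omega \tilde f_\alpha \cdot D_\alpha w\,dx.
\]
The bilinear form on the left is coercive on $\{w\in \mathring{W}_2^1(\Omega)^d : \operatorname{div} w = 0\}$, equipped with the $\|Dw\|_{L_2(\Omega)}$ inner product, by \eqref{eq1016_01} and Poincar\'e. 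For the right-hand side, H\"older combined with the Sobolev embedding $\mathring{W}_2^1(\Omega) \hookrightarrow L_{q_1'}(\Omega)$, which holds precisely when $q_1' \le 2d/(d-2)$ (i.e.\ $q_1 \ge 2d/(d+2)$ if $d\ge 3$; any $q_1 > 1$ if $d=2$), gives $\bigl|\int_\Omega f\cdot w\,dx\bigr| \le C(d,q_1,K)\|f\|_{L_{q_1}(\Omega)}\|Dw\|_{L_2(\Omega)}$; the dependence of this constant on $K = \operatorname{diam}\Omega$ (through the embedding on a bounded domain of fixed diameter) is what brings $K$ into $N_2$. Lax--Milgram then supplies a unique divergence-free $v$ with $\|Dv\|_{L_2(\Omega)}$ controlled by $\|f\|_{L_{q_1}(\Omega)}$ and $\|\tilde f_\alpha\|_{L_2(\Omega)}$.

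To recover the pressure, $\cL v - f - D_\alpha \tilde f_\alpha$ annihilates all divergence-free test fields in $\mathring{W}_2^1(\Omega)^d$, so a standard de Rham-type result, valid on any bounded domain admitting the Bogovskii operator, produces $p \in L_2(\Omega)$ with $(p)_\Omega = 0$ and $\nabla p = f + D_\alpha \tilde f_\alpha - \cL v$ in $\cD'(\Omega)$. To bound $\|p\|_{L_2(\Omega)}$ I apply Assumption \ref{assum0224_1} to $p$ itself: take $\phi := Bp \in \mathring{W}_2^1(\Omega)^d$ with $\operatorname{div}\phi = p$ and $\|D\phi\|_{L_2(\Omega)} \le K_1\|p\|_{L_2(\Omega)}$. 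Integration by parts gives
\[
\|p\|_{L_2(\Omega)}^2 = -\int_\Omega f\cdot\phi\,dx + \int_\Omega \tilde f_\alpha\cdot D_\alpha\phi\,dx - \int_\Omega A^{\alpha\beta}D_\beta v\cdot D_\alpha \phi\,dx,
\]
and bounding each term via H\"older, Sobolev, and \eqref{eq1016_01}, then dividing by $\|p\|_{L_2(\Omega)}$, yields $\|p\|_{L_2(\Omega)} \le N_1(\|\tilde f_\alpha\|_{L_2(\Omega)} + \|Dv\|_{L_2(\Omega)}) + N_2\|f\|_{L_{q_1}(\Omega)}$ with $N_1=N_1(d,\delta,K_1)$ and $N_2=N_2(d,\delta,q_1,K_1,K)$. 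Combining with the $Dv$ bound and $\|Du\|_{L_2(\Omega)} \le \|Dv\|_{L_2(\Omega)} + K_1\|g\|_{L_2(\Omega)}$ completes the estimate. Uniqueness follows by taking $f = f_\alpha = g = 0$: then $v = u$ is divergence-free, Lax--Milgram gives $v = 0$, so $\nabla p = 0$, and $(p)_\Omega = 0$ forces $p = 0$.

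For the special case $\Omega = B_R(x_0)$ or $B_R^+(x_0)$ with $q_1 = 2$, Remark \ref{rem0229_1} gives $K_1 = K_1(d)$, while Poincar\'e on such domains yields $\|w\|_{L_2(\Omega)} \le C(d)R\|Dw\|_{L_2(\Omega)}$ for $w \in \mathring{W}_2^1(\Omega)^d$, so $\bigl|\int_\Omega f\cdot w\,dx\bigr| \le C(d)R\|f\|_{L_2(\Omega)}\|Dw\|_{L_2(\Omega)}$. Running the same argument then produces the claimed bound with $R\|f\|_{L_2(\Omega)}$ in place of the $L_{q_1}$-term and with constants depending only on $d$ and $\delta$. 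The step that needs the most care is the pressure recovery, where the $K_1$- and $K$-dependences of the constants must be tracked carefully to preserve the stated split between $N_1$ and $N_2$.
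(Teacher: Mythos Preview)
Your proof is correct and takes a genuinely different route from the paper's. The paper's argument is a two-line reduction: for $f \equiv 0$ it simply cites \cite[Lemma 3.1]{CL15}, and for general $f$ it solves $\Delta w = f 1_\Omega$ on a large ball $B_R \supset \Omega$ and rewrites $f = D_\alpha(D_\alpha w)$, absorbing $D_\alpha w$ into the $f_\alpha$ slot; the $W_{q_1}^2$-estimate for $w$ on $B_R$ together with Sobolev embedding is what introduces the dependence on $K$ and $q_1$ into $N_2$. Your approach, by contrast, is entirely self-contained: you run Lax--Milgram directly on the divergence-free subspace of $\mathring{W}_2^1(\Omega)^d$, handle the $f$-term through the embedding $\mathring{W}_2^1 \hookrightarrow L_{q_1'}$, and recover the pressure by testing against $Bp$. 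The paper's version is more economical in print but hides the mechanics inside a citation; yours makes the origin of every constant transparent, in particular the split between $N_1 = N_1(d,\delta,K_1)$ (ellipticity plus the Bogovskii bound) and $N_2$ (which additionally picks up $K$ through the Sobolev constant on a domain of diameter $K$), and it handles the ball/half-ball case with the explicit $R$-scaling in the same stroke. One minor point to tighten: your pressure step tacitly uses that $\operatorname{div}: \mathring{W}_2^1(\Omega)^d \to L_2^0(\Omega)$ is surjective with a bounded right inverse, so its adjoint has closed range; this is exactly Assumption \ref{assum0224_1} and is what upgrades the de Rham $p$ from a distribution to an $L_2$ function before you test against $\phi = Bp$.
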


\begin{proof}
If $f \equiv 0$, the lemma follows from, for instance, \cite[Lemma 3.1]{CL15}.
To have a non-zero $f$, we obtain $w \in W_{q_1}^2(B_R)$ satisfying the Laplace equation $\Delta w = f 1_{\Omega}$ in $B_R \supset \Omega$ with the zero boundary condition on $\partial B_R$. Then, we solve the above Stokes system by replacing the right-hand side $f + D_\alpha f_\alpha$ by $D_\alpha( D_\alpha w + f_\alpha)$, which makes $N_2$ depend on $K$ and $q_1$.
\end{proof}

The main objective of this section is to prove a reverse H\"older's inequality for $Du$ and $p$.

\begin{lemma}
							\label{lem0712_1}
Let $q_1\in (1,2)$ with $q_1\ge 2d/(d+2)$, and $\Omega \subset \bR^d$ be a bounded domain with $\operatorname{diam} \Omega \le K$ satisfying Assumption \ref{assum1004} with a constant $K_1$ in the estimate \eqref{eq0711_00}.
Suppose that $(u,p) \in W_2^1(\Omega)^d \times L_2(\Omega)$ satisfies
\begin{equation}
							\label{eq0225_01}
\begin{cases}
\cL u + \nabla p = D_\alpha f_\alpha
\quad
&\text{in}\,\,\Omega,
\\
\operatorname{div} u = g
\quad
&\text{in}\,\,\Omega,
\quad
\\
u=0 \quad
&\text{on}\,\,\partial\Omega,
\end{cases}
\end{equation}
where $f_\alpha, g \in L_2(\Omega)$.
Then we have the following.
\begin{enumerate}
\item[(i)] For $x_0 \in \partial\Omega$ and $r \in (0, R_0]$,
\begin{equation}
							\label{eq0707_01}
\begin{aligned}
&(|Du|^2)_{\Omega_{r/2}(x_0)}^{1/2}
+(|p|^2)_{\Omega_{r/2}(x_0)}^{1/2}
\\
&\le N(|f_\alpha|^2)_{\Omega_{r}(x_0)}^{1/2}
+N(|g|^2)_{\Omega_{r}(x_0)}^{1/2}
+N(|Du|^{q_1})_{\Omega_{r}(x_0)}^{1/q_1}
+N(|p|^{q_1})_{\Omega_{r}(x_0)}^{1/{q_1}},
\end{aligned}
\end{equation}
where $N=N(d,\delta, q_1, K_1)$.

\item[(ii)] The same estimate as in \eqref{eq0707_01} holds for $x_0 \in \Omega$ if $r > 0$ and $r \le  \operatorname{dist}(x_0,\Omega)$.
Recall that in this case $\Omega_r(x_0) = B_r(x_0)$.
\end{enumerate}
\end{lemma}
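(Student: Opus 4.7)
I would treat the boundary case (i); the interior case (ii) is a strictly simpler variant, in which $u$ is replaced by $u - (u)_{B_r(x_0)}$ and the Bogovskii operator is applied on $B_r(x_0)$, whose constant depends only on $d$ by Remark \ref{rem0229_1}. The overall plan is to derive separate bounds for $p$ and for $Du$ by testing the Stokes system against divergence-free vector fields produced by the Bogovskii operator, and then to upgrade the $L^2$ norm of $u/r$ on the right to an $L^{q_1}$ norm of $Du$ via the Sobolev--Poincar\'e inequality. A preliminary observation: by Assumption \ref{assum1004}, the set $\Omega_r(x_0)$ is sandwiched between two half-balls (with a universal flatness constant $1/16$) and is thus a John domain, so by scaling the Bogovskii operator on $\Omega_r(x_0)$ is bounded from $L_2$ to $\mathring{W}_2^1$ with a constant depending only on $d$, which I absorb into $K_1$.

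For the pressure, I shift $p$ so that $(p)_{\Omega_r(x_0)} = 0$, pick a standard cutoff $\eta$ supported in $B_r(x_0)$ with $\eta \equiv 1$ on $B_{r/2}(x_0)$ and $|\nabla \eta| \le C/r$, and use Bogovskii to produce $\xi \in \mathring{W}_2^1(\Omega_r(x_0))^d$ with $\operatorname{div}\xi = \eta^2 p - c$ (where the constant $c$ enforces zero-mean compatibility) and $\|D\xi\|_{L_2} \le C\|\eta^2 p\|_{L_2}$. Testing the weak form against $\xi$, the normalization $(p)_{\Omega_r(x_0)} = 0$ kills the $c$-contribution, so the pressure term collapses to $\int \eta^2 p^2$; combined with Cauchy--Schwarz on the remaining terms and the obvious $\|\eta^2 p\|_{L_2} \le (\int \eta^2 p^2)^{1/2}$, this yields
\[
\Bigl(\int \eta^2 p^2\Bigr)^{1/2} \le C\bigl(\|f_\alpha\|_{L_2(\Omega_r(x_0))} + \|Du\|_{L_2(\Omega_r(x_0))}\bigr).
\]

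For the velocity, I apply Bogovskii again to produce $w \in \mathring{W}_2^1(\Omega_r(x_0))^d$ solving $\operatorname{div} w = \operatorname{div}(\eta^2 u) = 2\eta\, u \cdot \nabla\eta + \eta^2 g$ (compatibility holds since $\eta^2 u$ vanishes on $\partial \Omega_r(x_0)$), with $\|Dw\|_{L_2} \le C(\|u/r\|_{L_2(\Omega_r(x_0))} + \|g\|_{L_2(\Omega_r(x_0))})$. The test function $\psi := \eta^2 u - w$ is then divergence free and zero on $\partial \Omega$. Testing, using ellipticity, and applying Young's inequality yields, for any small $\lambda > 0$, a Caccioppoli-type bound
\[
\int \eta^2 |Du|^2 \le \lambda \int_{\Omega_r(x_0)} |Du|^2 + C\bigl(\|u/r\|_{L_2(\Omega_r(x_0))}^2 + \|f_\alpha\|_{L_2(\Omega_r(x_0))}^2 + \|g\|_{L_2(\Omega_r(x_0))}^2\bigr),
\]
with $C$ depending on $\lambda$, $\delta$, $K_1$. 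The $\lambda$-term arises from the cross product $\int D_\alpha w \cdot A^{\alpha\beta} D_\beta u$, which cannot be localized to $\{\eta = 1\}$ since $w$'s support fills $\Omega_r(x_0)$; this is the main technical obstacle, because the offending term exceeds the left-hand side and cannot be absorbed on fixed radii. I resolve it by running the argument on two nested radii $r/2 \le \rho_1 < \rho_2 \le r$ with the cutoff adapted to the pair and then invoking the standard absorption lemma (if $\phi(\rho_1) \le \theta \phi(\rho_2) + M$ for all $r/2 \le \rho_1 < \rho_2 \le r$ with a fixed $\theta < 1$, then $\phi(r/2) \le C M$) to eliminate the $\lambda$-term, ending with
\[
\|Du\|_{L_2(\Omega_{r/2}(x_0))}^2 \le C\bigl(\|u/r\|_{L_2(\Omega_r(x_0))}^2 + \|f_\alpha\|_{L_2(\Omega_r(x_0))}^2 + \|g\|_{L_2(\Omega_r(x_0))}^2\bigr).
\]

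To finish, since $u = 0$ on $\partial \Omega$, I extend $u$ by zero to $B_r(x_0)$ and invoke the Sobolev--Poincar\'e inequality on $B_r(x_0)$; the hypothesis $q_1 \ge 2d/(d+2)$ is precisely what forces the Sobolev exponent $q_1^\ast \ge 2$, yielding after scaling $(|u/r|^2)^{1/2}_{\Omega_r(x_0)} \le C(|Du|^{q_1})^{1/q_1}_{\Omega_r(x_0)}$. Substituting into the velocity bound gives the $(|Du|^2)^{1/2}_{\Omega_{r/2}(x_0)}$ portion of \eqref{eq0707_01}; combining with the pressure bound and using Jensen's inequality $|(p)_{\Omega_r(x_0)}| \le (|p|^{q_1})^{1/q_1}_{\Omega_r(x_0)}$ to undo the initial shift of $p$ completes the proof.
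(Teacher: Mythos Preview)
Your approach is genuinely different from the paper's and, modulo one fixable gap, sound. The paper does not test directly: it sets $\hat u=\eta_r u$, $\hat p=\eta_r p$, writes the Stokes system they satisfy on all of $\Omega$ with right-hand side $D_\alpha\hat f_\alpha+\hat f$, converts the non-divergence source $\hat f$ (which carries $Du$ and $p$ at order $1/r$) into divergence form by solving $\operatorname{div} w^i=\hat f^i$ on $B_r$ in $L_{q_1}$ and promoting $w$ to $L_2$ via Sobolev--Poincar\'e, and then invokes the global $L_2$ solvability of Lemma~\ref{lem0225_1} on $\Omega$. No iteration is needed, and the constant $K_1$ enters through that global step. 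Your route---testing with divergence-free functions built by Bogovskii on $\Omega_r(x_0)$, a Caccioppoli inequality for $Du$ closed by the absorption lemma, and a separate duality bound for $p$---is more hands-on and in fact only uses the local Bogovskii constant on $\Omega_r(x_0)$ (your claim that this constant depends only on $d$ needs Assumption~\ref{assum1004} at every scale $\le r$, not just at scale $r$, so that the rescaled $\Omega_r(x_0)$ is uniformly John; this is available).

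The gap is in the final combination. Your pressure bound reads
\[
\Bigl(\int\eta^2 p^2\Bigr)^{1/2}\le C\bigl(\|Du\|_{L_2(\Omega_r(x_0))}+\|f_\alpha\|_{L_2(\Omega_r(x_0))}\bigr),
\]
with the \emph{full} $L_2$ norm of $Du$ on $\Omega_r(x_0)$ on the right. Your velocity estimate, after iteration, only controls $\|Du\|_{L_2(\Omega_{r/2}(x_0))}$; it says nothing about $\|Du\|_{L_2(\Omega_r(x_0))}$. Since $q_1<2$, the term $(|Du|^2)_{\Omega_r}^{1/2}$ is not dominated by $(|Du|^{q_1})_{\Omega_r}^{1/q_1}$, so ``combining'' as written does not yield \eqref{eq0707_01}. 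The fix is to nest the scales: run the velocity argument between $r/2$ and $r$, then apply the pressure argument with a cutoff adapted to $r/4$ and $r/2$, so that the $\|Du\|_{L_2(\Omega_{r/2})}$ appearing on the right of the pressure bound is already controlled by the velocity step. This produces \eqref{eq0707_01} with $\Omega_{r/4}(x_0)$ in place of $\Omega_{r/2}(x_0)$, which is harmless for the application in Lemma~\ref{lem0715_1}.
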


\begin{remark}
							\label{rem0118_1}
Since Assumption \ref{assum1004} is enforced on $\partial\Omega$, as noted in Remarks \ref{rem0918_1} and \ref{rem0229_2}, one can write $d$, $R_0$, and $\operatorname{diam} \Omega$ instead of $K_1$ in the dependency statements of the constants $N$ in Lemma \ref{lem0712_1}.
However, we keep $K_1$ to show that it comes from the estimate of the divergence equation.
\end{remark}

\begin{proof}[Proof of Lemma \ref{lem0712_1}]
Take a cutoff $\eta\in C_0^\infty(B_1)$ satisfying $\eta=1$ on $B_{1/2}$. Denote $\eta_r(x)=\eta(x/r)$.

{\em Case (i)}.
For any $x_0 \in \partial \Omega$ and $r \in (0, R_0]$,
set
$$
\hat u:=\eta_r(\cdot - x_0) u,\quad \hat p:=\eta_r(\cdot - x_0) p.
$$
Without loss of generality, we assume that $x_0 = 0$.
Then, it follows from \eqref{eq0225_01} that $(\hat{u}, \hat{p})$ satisfies
\begin{equation}
							\label{eq0628_02}
\begin{cases}
\cL \hat u + \nabla \hat p = D_\alpha \hat f_\alpha+\hat f
\quad
&\text{in}\,\,\Omega,
\\
\operatorname{div} \hat u = \eta_r g + \nabla \eta_r \cdot u
\quad
&\text{in}\,\,\Omega,
\\
\hat u = 0
\quad
&\text{on} \,\, \partial \Omega,
\end{cases}
\end{equation}
where
\begin{align*}
\hat f_\alpha&=\eta_r f_\alpha+A^{\alpha\beta}uD_\beta \eta_r,\\
\hat f&=-D_\alpha\eta_r f_\alpha+A^{\alpha\beta}D_\beta u D_\alpha\eta_r + \nabla \eta_r p.
\end{align*}
We can extend $\hat f$ to $B_r$, so that $(\hat f)_{B_r}=0$ and $\|\hat f\|_{L_2(B_r)}$ is comparable to $\|\hat f\|_{L_2(\Omega_r)}$.
Indeed, by Assumption \ref{assum1004}, we have
$$
N|B_r| \le |B_r\setminus\Omega_r| \le |B_r|,
$$
where $N=N(d) \in (0,1)$.
Now we set
$$
c := - \frac{1}{|B_r\setminus \Omega_r|} \int_{\Omega_r} \hat{f}(x) \, dx
$$
and define the extension of $\hat{f}$, still denoted by $\hat{f}$, to be
$$
\hat{f}(x) =
\left\{
\begin{aligned}
\hat{f}(x)
\quad &\text{if} \quad x \in \Omega_r,
\\
c \quad &\text{if} \quad x \in B_r\setminus \Omega_r.
\end{aligned}
\right.
$$
Then, $(\hat{f})_{B_r} = 0$ and
$$
\|\hat{f}\|_{L_2(B_r)} \le N(d) \|\hat{f}\|_{L_2(\Omega_r)}.
$$

Let $w^i = (w^i_\alpha)_{\alpha=1}^d = (w^i_1, \ldots, w^i_d) \in \mathring W^1_{q_1}(B_r)^d$, $i=1,\ldots,d$, be a solution (see Remark \ref{rem0229_1}) to
$$
\operatorname{div} w^i = \hat f^i\quad \text{in}\,\,B_r
$$
satisfying
\begin{equation}
                            \label{eq0628_01}
\|Dw^i\|_{L_{q_1}(B_r)}\le N(d,q_1)\|\hat f^i\|_{L_{q_1}(B_r)}
\le N(d,q_1)\|\hat f^i\|_{L_{q_1}(\Omega_r)}.
\end{equation}
By the Sobolev-Poincar\'e inequality and \eqref{eq0628_01}, we get
\begin{align}
                                            \label{eq0628_03}
\|w^i\|_{L_2(B_r)}&\le Nr^{1-d(1/{q_1}-1/2)}\|Dw^i\|_{L_{q_1}(B_r)}\nonumber\\
&\le N(d,q_1)r^{1-d(1/{q_1}-1/2)}\|\hat f^i\|_{L_{q_1}(\Omega_r)}.
\end{align}
We extend $w^i$, $i=1,\ldots,d$, to be zero outside $B_r$.
Then, it follows from \eqref{eq0628_02} that $(\hat{u}, \hat{p} - (\hat{p})_\Omega)$ satisfies
\begin{equation}
							\label{eq0711_01}
\begin{cases}
\cL \hat u + \nabla (\hat p - (\hat{p})_\Omega) = D_\alpha (\hat f_\alpha+ w_\alpha)
\quad
&\text{in}\,\,\Omega,
\\
\operatorname{div} \hat u = \eta_r g + \nabla \eta_r \cdot u
\quad
&\text{in}\,\,\Omega,
\\
\hat u = 0
\quad
&\text{on} \,\, \partial \Omega,
\end{cases}
\end{equation}
where $w_\alpha = (w^1_\alpha, \ldots, w^d_\alpha)^{\operatorname{tr}}$, $\alpha = 1, \ldots, d$.
Using integration by parts, we check that
$$
\int_\Omega \eta_r g+\nabla \eta_r\cdot u=0.
$$
Now we apply the $W^1_2$-estimate in Lemma \ref{lem0225_1} to \eqref{eq0711_01} and use \eqref{eq0628_03} to get
\begin{align*}
&\|D\hat u\|_{L_2(\Omega_r)}+\|\hat p-(\hat p)_\Omega\|_{L_2(\Omega_r)}
\le N_1 \|\hat f_\alpha\|_{L_2(\Omega_r)}\\
&\quad+
N_2 \, r^{1-d(1/{q_1}-1/2)}\|\hat f\|_{L_{q_1}(\Omega_r)}+
N_1\|\eta_r g-\nabla \eta_r\cdot u\|_{L_2(\Omega_r)},
\end{align*}
where $N_1 = N(d,\delta,K_1)$ and $N_2=N(d,\delta, q_1, K_1)$, which implies that
\begin{align*}
&\|D\hat u\|_{L_2(\Omega_r)}+\|\hat p\|_{L_2(\Omega_r)}
%\\&
\le N_1 \|\hat f_\alpha\|_{L_2(\Omega_r)}+
N_2 \, r^{1-d(1/{q_1}-1/2)}\|\hat f\|_{L_{q_1}(\Omega_r)}
\\
&\quad+ N_1\|\eta_r g-\nabla \eta_r\cdot u\|_{L_2(\Omega_r)}+\| p\|_{L_1(\Omega_r)}|\Omega|^{-1}|\Omega_r|^{1/2}.
\end{align*}
Since $u$ vanishes on $\partial \Omega$, by the boundary Sobolev-Poincar\'e inequality,
$$
r^{-1}(|u|^2)_{\Omega_r}^{1/2}\le N(d) (|Du|^{q_1})_{\Omega_r}^{1/{q_1}}.
$$
Combining the above two inequalities and using H\"older's inequality, we get
\begin{align*}
%                    \label{eq8.55}
&(|Du|^2)_{\Omega_{r/2}}^{1/2}
+(|p|^2)_{\Omega_{r/2}}^{1/2}\nonumber\\
&\le N_1(|f_\alpha|^2)_{\Omega_{r}}^{1/2}
+N_1(|g|^2)_{\Omega_{r}}^{1/2}
+N_1 r^{-1}(|u|^2)_{\Omega_{r}}^{1/2}\nonumber\\
&+ N_2(|f_\alpha|^{q_1})_{\Omega_{r}}^{1/{q_1}} +N_2 (|Du|^{q_1})_{\Omega_{r}}^{1/{q_1}}
+N_2 (|p|^{q_1})_{\Omega_{r}}^{1/{q_1}}
+ \frac{|\Omega_r|}{|\Omega|} (|p|^{q_1})_{\Omega_{r}}^{1/{q_1}} \nonumber\\
&\le N(|f_\alpha|^2)_{\Omega_{r}}^{1/2}
+N(|g|^2)_{\Omega_{r}}^{1/2}
+N(|Du|^{q_1})_{\Omega_{r}}^{1/{q_1}}
+N(|p|^{q_1})_{\Omega_{r}}^{1/{q_1}},
\end{align*}
where the last $N$ depends only on $d$, $\delta$, $q_1$, and $K_1$.
This proves \eqref{eq0707_01}.

{\em Case (ii).} For $x_0 \in \Omega$ and $r > 0$ such that $B_r(x_0)\subset \Omega$, we set
$$
\hat u:=\eta_r(\cdot-x_0)\big(u-(u)_{B_r(x_0)}\big),
\quad \hat p:=\eta_r(\cdot-x_0)p.
$$
Again, we assume that $x_0 = 0$.
Then, $(\hat{u}, \hat{p})$ satisfies
\begin{equation}
							\label{eq0711_02}
\begin{cases}
\cL \hat u + \nabla \hat p = D_\alpha \hat f_\alpha+\hat f
\quad
&\text{in}\,\,\Omega,
\\
\operatorname{div} \hat u = \eta_r g + \nabla \eta_r \cdot \left(u - (u)_{B_r}\right)
\quad
&\text{in}\,\,\Omega,
\\
\hat u = 0
\quad
&\text{on} \,\, \partial \Omega,
\end{cases}
\end{equation}
where
\begin{align*}
\hat f_\alpha&=\eta_r f_\alpha+A^{\alpha\beta}(u-(u)_{B_r}\big)
D_\beta \eta_r,\\
\hat f&=-(D_\alpha\eta_r)f_\alpha+A_{\alpha\beta}D_\beta u D_\alpha\eta_r+\nabla \eta_r p.
\end{align*}
Note that $(\hat f)_{B_r}=0$ by the weak formulation of the equation and the fact that $\eta_r$ is supported on $B_r\subset \Omega$.

Let $w^i = (w^i_\alpha)_{\alpha=1}^d = (w^i_1, \ldots, w^i_d) \in \mathring W^1_{q_1}(B_r)^d$, $i=1,\ldots,d$, be a solution (see Remark \ref{rem0229_1}) to
$$
\operatorname{div} w^i = \hat f^i \quad \text{in}\,\,B_r
$$
satisfying
\begin{equation}
                            \label{eq0711_03}
\|Dw^i\|_{L_{q_1}(B_r)}\le N(d,q_1)\|\hat f^i\|_{L_{q_1}(B_r)}.
\end{equation}
By the Sobolev-Poincar\'e inequality and \eqref{eq0711_03}, we get
\begin{align}
                                            \label{eq0711_04}
\|w^i\|_{L_2(B_r)}&\le Nr^{1-d(1/{q_1}-1/2)}\|Dw^i\|_{L_{q_1}(B_r)}\nonumber\\
&\le N(d,{q_1})r^{1-d(1/{q_1}-1/2)}\|\hat f^i\|_{L_{q_1}(B_r)}.
\end{align}
We extend $w^i$, $i=1,\ldots,d$, to be zero outside $B_r$.
Then, it follows from \eqref{eq0711_02} that $(\hat{u}, \hat{p} - (\hat{p})_\Omega)$ satisfies
\begin{equation}
							\label{eq0711_05}
\begin{cases}
\cL \hat u + \nabla (\hat p - (\hat{p})_\Omega) = D_\alpha (\hat f_\alpha+ w_\alpha)
\quad
&\text{in}\,\,\Omega,
\\
\operatorname{div} \hat u = \eta_r g + \nabla \eta_r \cdot \left(u - (u)_{B_r}\right)
\quad
&\text{in}\,\,\Omega,
\\
\hat u = 0
\quad
&\text{on} \,\, \partial \Omega,
\end{cases}
\end{equation}
where $w_\alpha = (w^1_\alpha, \ldots, w^d_\alpha)^{\operatorname{tr}}$, $\alpha = 1, \ldots, d$.
Now, we apply the $W^1_2$-estimate in Lemma \ref{lem0225_1} to \eqref{eq0711_05} and use \eqref{eq0711_04} to get
\begin{align*}
&\|D\hat u\|_{L_2(B_r)}+\|\hat p-(\hat p)_\Omega\|_{L_2(B_r)}
\le N_1 \|\hat f_\alpha\|_{L_2(B_r)}\\
&\quad+
N_2 \, r^{1-d(1/{q_1}-1/2)}\|\hat f\|_{L_{q_1}(B_r)}+
N_1\|\eta_r g-\nabla \eta_r\cdot \left(u - (u)_{B_r}\right)\|_{L_2(B_r)},
\end{align*}
where $N_1 = N(d,\delta,K_1)$ and $N_2=N(d,\delta, q_1, K_1)$. This implies that
\begin{align*}
&\|D\hat u\|_{L_2(B_r)}+\|\hat p\|_{L_2(B_r)}
%\\&
\le N_1 \|\hat f_\alpha\|_{L_2(B_r)}+
N_2 \, r^{1-d(1/{q_1}-1/2)}\|\hat f\|_{L_{q_1}(B_r)}
\\
&\quad+ N_1\|\eta_r g-\nabla \eta_r\cdot \left(u - (u)_{B_r}\right)\|_{L_2(B_r)}+\| p\|_{L_1(B_r)}|\Omega|^{-1}|B_r|^{1/2}.
\end{align*}
The Sobolev-Poincar\'e inequality shows that
$$
r^{-1}(|u-(u)_{B_r}|^2)_{B_r}^{1/2}\le N(d) (|Du|^{q_1})_{B_r}^{1/{q_1}}.
$$
Combining the above two inequalities and using H\"older's inequality, we get
\begin{align*}
%                    \label{eq8.55}
&(|Du|^2)_{B_{r/2}}^{1/2}
+(|p|^2)_{B_{r/2}}^{1/2}\nonumber\\
&\le N_1(|f_\alpha|^2)_{B_r}^{1/2}
+N_1(|g|^2)_{B_r}^{1/2}
+N_1 r^{-1}(|u - (u)_{B_r}|^2)_{B_r}^{1/2}\nonumber\\
&\quad + N_2(|f_\alpha|^{q_1})_{B_r}^{1/{q_1}} +N_2 (|Du|^{q_1})_{B_r}^{1/{q_1}}
+N_2 (|p|^{q_1})_{B_r}^{1/{q_1}}
+ \frac{|B_r|}{|\Omega|} (|p|^{q_1})_{B_r}^{1/{q_1}} \nonumber\\
&\le N(|f_\alpha|^2)_{B_r}^{1/2}
+N(|g|^2)_{B_r}^{1/2}
+N(|Du|^{q_1})_{B_r}^{1/{q_1}}
+N(|p|^{q_1})_{B_r}^{1/{q_1}},
\end{align*}
where the last $N$ depends only on $d$, $\delta$, $q_1$, and $K_1$.
This proves \eqref{eq0707_01} when $B_r(x_0) \subset \Omega$.
\end{proof}

The following proposition can be found, for instance, in \cite[Ch. V]{MR717034}.

\begin{proposition}
							\label{prop0713_1}
Let $1< q_0 < q_1$, $\Phi \ge 0$ in a $d$-dimensional cube $Q$, and $\Psi \in L_{q_1}(Q)$.
Suppose that
$$
\dashint_{B_r(x_0)} \Phi^{q_0} \, dx \le N_0 \left( \dashint_{B_{8r}(x_0)} \Phi \right)^{q_0} + N_0 \dashint_{B_{8r}(x_0)} \Psi^{q_0} \, dx + \theta \dashint_{B_{8r}(x_0)} \Phi^{q_0} \, dx
$$
for every $x_0 \in Q$ and $r \in (0,R_2]$ such that $B_{8r}(x_0) \subset Q$, where $R_2$ and $\theta$ are constants with $R_2 > 0$ and $\theta \in [0,1)$.
Then, $\Phi \in L_{\tilde{q},\operatorname{loc}}(Q)$ for $\tilde{q} \in [q_0, q_0+\varepsilon)$ and
$$
\left( \dashint_{B_r(x_0)} \Phi^{\tilde{q}} \, dx \right)^{1/\tilde{q}} \le N \left( \dashint_{B_{8r}(x_0)} \Phi^{q_0} \right)^{1/q_0} + N \left(\dashint_{B_{8r}(x_0)} \Psi^{\tilde{q}} \, dx \right)^{1/\tilde{q}}
$$
for all $B_{8r}(x_0) \subset Q$, $r < R_2$,
where $N$ and $\varepsilon$ depend only on $d$, $q_0$, $q_1$, $\theta$, and $N_0$, and $\varepsilon$ satisfies $0 < \varepsilon < q_1 - q_0$.
\end{proposition}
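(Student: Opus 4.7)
My plan follows the classical Gehring self-improvement argument in \cite[Ch.~V]{MR717034} and rests on three ingredients: a Calder\'on--Zygmund stopping-time decomposition, the hypothesis combined with a truncation of $\Phi$ and $\Psi$, and a final integration in the level parameter.

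Fix $B_{8r}(x_0)\subset Q$ with $r<R_2$ and set $t_0 := \lambda\bigl(\dashint_{B_{8r}(x_0)}\Phi^{q_0}\,dx\bigr)^{1/q_0}$ with $\lambda=\lambda(d,q_0)$ large. For $t>t_0$ and $x\in B_r(x_0)\cap\{\Phi>t\}$, Lebesgue differentiation together with the choice of $\lambda$ guarantees the existence of a largest radius $r_x$ with $(\Phi^{q_0})_{B_{r_x}(x)}=t^{q_0}$ and $B_{8r_x}(x)\subset B_{8r}(x_0)$; maximality then yields $(\Phi^{q_0})_{B_{8r_x}(x)}\le t^{q_0}$. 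A Vitali covering extracts a disjoint subfamily $\{B_{r_j}(x_j)\}$ whose $5$-fold dilates cover $\{\Phi>t\}\cap B_r(x_0)$ up to measure zero and whose $8$-fold dilates have bounded overlap.

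On each $B_{r_j}$, the hypothesis combined with $(\Phi^{q_0})_{B_{8r_j}}\le t^{q_0}$ yields
$$
(1-\theta)\,t^{q_0}\le N_0\bigl((\Phi)_{B_{8r_j}}\bigr)^{q_0}+N_0\,(\Psi^{q_0})_{B_{8r_j}}.
$$
Truncate $\Phi=\Phi\chi_{\{\Phi\le\eta t\}}+\Phi\chi_{\{\Phi>\eta t\}}$ (and similarly $\Psi$) with $\eta\in(0,1)$, apply H\"older (using $q_0>1$) to the large-part averages, and choose $\eta=\eta(d,q_0,N_0,\theta)$ so that the $(\eta t)^{q_0}$ contributions absorb into the LHS. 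Multiplying by $|B_{8r_j}|$, summing over $j$ with bounded overlap, and using $F(t):=\int_{\{\Phi>t\}\cap B_r(x_0)}\Phi^{q_0}\,dx\le t^{q_0}\sum_j|B_{8r_j}|$, one obtains the distribution-function estimate
$$
F(t)\le C\!\!\int_{\{\Phi>\eta t\}\cap B_{8r}(x_0)}\!\!\!\Phi^{q_0}\,dx+C\!\!\int_{\{\Psi>\eta t\}\cap B_{8r}(x_0)}\!\!\!\Psi^{q_0}\,dx.
$$

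To finish, multiply by $t^{\varepsilon-1}$, integrate over $t\in(t_0,\infty)$, apply Fubini and the layer-cake identity to rewrite the $t$-integrals as spatial integrals of $\Phi^{q_0+\varepsilon}$ and $\Psi^{q_0+\varepsilon}$, and change variables $s=\eta t$. After a preliminary truncation of $\Phi$ at a level $L$ (to make the resulting integrals finite and permit rearrangement) and an iteration along a dyadic sequence of radii that converts the crude bound $\int_{B_r}\Phi^{q_0+\varepsilon}\le A\int_{B_{8r}}\Phi^{q_0+\varepsilon}+\ldots$ into the form stated in the proposition, one obtains the claimed $L^{q_0+\varepsilon}$ estimate; passing $L\to\infty$ completes the proof. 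The restriction $\varepsilon<q_1-q_0$ enters here so that $\Psi^{q_0+\varepsilon}$ is integrable by the hypothesis $\Psi\in L^{q_1}$.

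The main obstacle is the asymmetry between the radii $r$ and $8r$ in the hypothesis combined with the absorbable term $\theta\dashint_{B_{8r}}\Phi^{q_0}$ on the enlarged ball. The stopping criterion is chosen precisely so that the selected balls satisfy $(\Phi^{q_0})_{B_{r_j}}=t^{q_0}$ \emph{and} $(\Phi^{q_0})_{B_{8r_j}}\le t^{q_0}$, which is exactly what allows the $\theta$-term to be controlled by the same $t^{q_0}$ as the LHS and absorbed with a $1-\theta>0$ loss. The secondary subtlety is the absorption of the $\Phi^{q_0+\varepsilon}$ term after integration, which is handled by the standard Giaquinta truncation-plus-iteration device.
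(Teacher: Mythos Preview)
The paper does not supply its own proof of this proposition; it simply cites \cite[Ch.~V]{MR717034} (Giaquinta), which is exactly the source whose argument you are outlining. Your sketch of the Gehring-type self-improvement argument is the standard one and matches the cited reference, so there is nothing to compare.
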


\begin{lemma}[Reverse H\"{o}lder's inequality]
							\label{lem0715_1}
Let $\Omega \subset \bR^d$ be a bounded domain with $\operatorname{diam} \Omega \le K$ satisfying Assumption \ref{assum1004} with a constant $K_1$ in the estimate \eqref{eq0711_00}.
Suppose that $(u,p) \in \mathring{W}_2^1(\Omega)^d \times L_2(\Omega)$ satisfies \eqref{eq0225_01},
where $f_\alpha, g \in L_{q_1}(\Omega)$, $q_1 > 2$, and $(g)_\Omega=0$.
Then, there exist constants $\tilde{q} \in (2,q_1)$ and $N>0$, depending only on $d$, $\delta$, $K_1$, and $q_1$, such that
\begin{multline*}
\left(|D \bar{u}|^{\tilde{q}} \right)^{1/\tilde{q}}_{B_r(x_0)}
+ \left(|\bar{p}|^{\tilde{q}} \right)^{1/\tilde{q}}_{B_r(x_0)}
\\
\le N\left(|D \bar{u}|^2 \right)^{1/2}_{B_{8r}(x_0)} + N \left(| \bar{p}|^2 \right)^{1/2}_{B_{8r}(x_0)}
+ N \left(|\bar{f}_\alpha|^{\tilde{q}} \right)^{1/\tilde{q}}_{B_{8r}(x_0)} + \left(|\bar{g}|^{\tilde{q}} \right)^{1/\tilde{q}}_{B_{8r}(x_0)}
\end{multline*}
for any $x_0 \in \bR^d$ and $r \in (0, R_0/8]$, where
$\bar{u}$, $\bar{p}$, $\bar{f}_\alpha$, and $\bar{g}$ are the extensions of $u$, $p$, $f_\alpha$, and $g$ to $\bR^d$ so that they are zero on $\bR^d \setminus \Omega$.
\end{lemma}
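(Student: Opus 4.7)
The strategy is to combine the two-sided reverse H\"older-type bound of Lemma \ref{lem0712_1} with the Giaquinta--Modica self-improvement principle stated in Proposition \ref{prop0713_1}. After extending $u$, $p$, $f_\alpha$, and $g$ by zero outside $\Omega$, the goal is to show that, for a fixed exponent $q_* \in [2d/(d+2), 2)$ (for definiteness $q_*=2d/(d+2)$), the function $\Phi:=(|D\bar u|+|\bar p|)^{q_*}$ satisfies the hypothesis of Proposition \ref{prop0713_1} on a cube $Q\supset\bar\Omega$ with $q_0:=2/q_*>1$ and $\Psi:=(|\bar f_\alpha|+|\bar g|)^{q_*}$.

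First I would establish the following uniform inequality: for every $x_0\in Q$ and every $r$ with $B_{8r}(x_0)\subset Q$ and $8r\le R_0$,
\begin{equation*}
\Bigl(\dashint_{B_r(x_0)}(|D\bar u|+|\bar p|)^2\Bigr)^{1/2}
\le N\Bigl(\dashint_{B_{8r}(x_0)}(|D\bar u|+|\bar p|)^{q_*}\Bigr)^{1/q_*}
+N\Bigl(\dashint_{B_{8r}(x_0)}(|\bar f_\alpha|+|\bar g|)^{2}\Bigr)^{1/2}.
\end{equation*}
To verify this I split into three cases. In case (a) $B_{8r}(x_0)\cap\Omega=\emptyset$, where the inequality is trivial. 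In case (b) $B_{2r}(x_0)\subset\Omega$, and I apply the interior part Lemma \ref{lem0712_1}(ii) at $x_0$ with radius $2r$. In case (c) there exists $z\in B_{2r}(x_0)\setminus\Omega$; then I select $y_0\in\partial\Omega$ with $|x_0-y_0|\le 2r$, note $B_r(x_0)\subset B_{3r}(y_0)$ and $B_{6r}(y_0)\subset B_{8r}(x_0)$, and apply the boundary part Lemma \ref{lem0712_1}(i) at $y_0$ with radius $6r\le R_0$. In case (c), Assumption \ref{assum1004} delivers $|\Omega_\rho(y_0)|\ge c(d)|B_\rho(y_0)|$ for $\rho\le R_0$, which is what converts every average over $\Omega_\rho(y_0)$ into the corresponding average over $B_\rho(y_0)$ at the cost of a purely dimensional constant, and the resulting constant $N$ depends only on $d$, $\delta$, $q_*$, and $K_1$.

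Next I raise the displayed inequality to the $q_0$-th power, obtaining
\begin{equation*}
\dashint_{B_r(x_0)}\Phi^{q_0}\,dx \le N_0\Bigl(\dashint_{B_{8r}(x_0)}\Phi\Bigr)^{q_0}+N_0\dashint_{B_{8r}(x_0)}\Psi^{q_0}\,dx,
\end{equation*}
which is exactly the hypothesis of Proposition \ref{prop0713_1} with $\theta=0$. Since $|\bar f_\alpha|,|\bar g|\in L_{q_1}(Q)$ and $q_1>2$, we have $\Psi\in L_{q_1/q_*}(Q)$ with $q_1/q_*>q_0$, so Proposition \ref{prop0713_1} yields some $\tilde q_0\in(q_0,q_1/q_*)$ and a corresponding integrability improvement for $\Phi$. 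Setting $\tilde q:=q_*\tilde q_0\in(2,q_1)$ and reversing the substitution then gives precisely the stated reverse H\"older inequality on balls $B_r(x_0)$ with $r\in(0,R_0/8]$ (after possibly enlarging $Q$ so that $B_{8r}(x_0)\subset Q$ remains admissible).

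The main technical obstacle is the boundary case (c): one must sandwich $B_r(x_0)$ between concentric balls centered at a genuine boundary point $y_0\in\partial\Omega$ in such a way that the geometric flatness in Assumption \ref{assum1004} cleanly converts averages over $\Omega_\rho(y_0)$ into averages over $B_\rho(y_0)$ without destroying the reverse H\"older structure. Once that is handled, the remaining steps are a rescaling of Lemma \ref{lem0712_1} (to pass from radii $r/2,r$ to the radii $r,8r$ required by Proposition \ref{prop0713_1}) and a direct invocation of Proposition \ref{prop0713_1}.
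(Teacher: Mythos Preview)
Your proposal is correct and mirrors the paper's proof: derive the local reverse-H\"older inequality via a three-case geometric split (interior / near-boundary / exterior) using Lemma~\ref{lem0712_1}, then feed it into Proposition~\ref{prop0713_1}; the paper uses radii $r/2,\,r,\,4r$ and splits on whether $B_r(x_0)$ lies in $\Omega$, meets $\partial\Omega$, or lies in $\bR^d\setminus\Omega$, but this is only a relabeling of your $r,\,2r,\,8r$ scheme. Two cosmetic points: in your case~(c) the selection of $y_0\in\partial\Omega$ with $|x_0-y_0|\le 2r$ requires also $B_{2r}(x_0)\cap\bar\Omega\neq\emptyset$ (when this fails the left-hand side vanishes and the estimate is trivial, so nothing is lost), and the endpoint choice $q_*=2d/(d+2)$ equals $1$ when $d=2$, so take $q_*$ strictly inside $(2d/(d+2),2)$ as the paper does.
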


\begin{proof}
We fix a constant $q_1\in ( 2d/(d+2),2)$.
We first prove that
\begin{multline}
							\label{eq0713_01}
(|D\bar{u}|^2)_{B_{r/2}(x_0)}^{1/2}
+(|\bar{p}|^2)_{B_{r/2}(x_0)}^{1/2}
\\
\le N(|\bar{f}_\alpha|^2)_{B_{4r}(x_0)}^{1/2} + N(|\bar{g}|^2)_{B_{4r}(x_0)}^{1/2} + N(|D\bar{u}|^{q_1})_{B_{4r}(x_0)}^{1/{q_1}} + N(|\bar{p}|^{q_1})_{B_{4r}(x_0)}^{1/{q_1}}	
\end{multline}
for any $x_0 \in \bR^d$ and $r \in (0,R_0/4]$, where $N=N(d,\delta, K_1)$.
Once this is proved, we apply Proposition \ref{prop0713_1} to obtain the desired estimate.

To prove \eqref{eq0713_01}, we
consider the following three cases:
$$
B_r(x_0) \subset \Omega,
\quad
B_r(x_0) \cap \partial \Omega \neq \emptyset,
\quad
B_r(x_0) \subset \bR^d \setminus \Omega.
$$

{\em Case 1}: $B_r(x_0) \subset \Omega$. In this case, by the second assertion of Lemma \ref{lem0712_1}, we have
\begin{equation*}
%							\label{eq0707_01}
\begin{aligned}
&(|D\bar{u}|^2)_{B_{r/2}(x_0)}^{1/2}
+(|\bar{p}|^2)_{B_{r/2}(x_0)}^{1/2} = (|D u|^2)_{B_{r/2}(x_0)}^{1/2}
+(|p|^2)_{B_{r/2}(x_0)}^{1/2}
\\
&\le N(|f_\alpha|^2)_{B_r(x_0)}^{1/2} + N(|g|^2)_{B_r(x_0)}^{1/2} + N(|Du|^{q_1})_{B_r(x_0)}^{1/{q_1}} + N(|p|^{q_1})_{B_r(x_0)}^{1/{q_1}}
\\
&\le N(|\bar{f}_\alpha|^2)_{B_{4r}(x_0)}^{1/2} + N(|\bar{g}|^2)_{B_{4r}(x_0)}^{1/2} + N(|D\bar{u}|^{q_1})_{B_{4r}(x_0)}^{1/{q_1}} + N(|\bar{p}|^{q_1})_{B_{4r}(x_0)}^{1/{q_1}},
\end{aligned}
\end{equation*}
where $N=N(d,\delta, q_1, K_1)$.

{\em Case 2}: $B_r(x_0) \cap \partial \Omega \neq \emptyset$.
Take $y_0 \in  \partial \Omega$ such that $|x_0 - y_0| = \operatorname{dist}(x_0, \partial\Omega) \le r$.
Then, we see that
$$
B_{r/2}(x_0) \subset B_{3r/2}(y_0) \subset B_{3r}(y_0) \subset B_{4r}(x_0).
$$
Recall that  $r \le R_0/8$.
By the first assertion of Lemma \ref{lem0712_1} and the fact that $|B_{3r}(y_0)| \le N(d) |\Omega_{3r}(y_0)|$, we have
\begin{equation*}
%							\label{eq0707_01}
\begin{aligned}
&(|D\bar{u}|^2)_{B_{r/2}(x_0)}^{1/2}
+(|\bar{p}|^2)_{B_{r/2}(x_0)}^{1/2} \le N (|D \bar{u}|^2)_{B_{3r/2}(y_0)}^{1/2}
+N (|\bar{p}|^2)_{B_{3r/2}(y_0)}^{1/2}
\\
&\le N (|D u|^2)_{\Omega_{3r/2}(y_0)}^{1/2}
+N (|p|^2)_{\Omega_{3r/2}(y_0)}^{1/2}
\\
&\le N(|f_\alpha|^2)_{\Omega_{3r}(y_0)}^{1/2} + N(|g|^2)_{\Omega_{3r}(y_0)}^{1/2} + N(|Du|^{q_1})_{\Omega_{3r}(y_0)}^{1/{q_1}} + N(|p|^{q_1})_{\Omega_{3r}(y_0)}^{1/{q_1}}
\\
&\le N(|\bar{f}_\alpha|^2)_{B_{4r}(x_0)}^{1/2} + N(|\bar{g}|^2)_{B_{4r}(x_0)}^{1/2} + N(|D\bar{u}|^{q_1})_{B_{4r}(x_0)}^{1/{q_1}} + N(|\bar{p}|^{q_1})_{B_{4r}(x_0)}^{1/{q_1}},
\end{aligned}
\end{equation*}
where $N=N(d,\delta, K_1)$.

{\em Case 3}: $B_r(x_0) \subset \bR^d \setminus \Omega$.
By the definitions of $\bar{u}$ and $\bar{p}$, the inequality \eqref{eq0713_01} trivially holds.
\end{proof}

\section{$L_\infty$ and H\"{o}lder estimates}
							\label{sec04}

In this section, we prove $L_\infty$ and H\"older estimates of certain linear combinations of $Du$ and $p$, which are crucial in proving our main results.
We set
\begin{equation*}
				%			\label{eq0328_03}
\cL_0 u = D_\alpha \left( A^{\alpha\beta}(x_1) D_\beta u \right),
\end{equation*}
where $A^{\alpha\beta}(x_1) = [A^{\alpha\beta}_{ij}(x_1)]_{i,j=1}^d$ are functions of only $x_1 \in \bR$.
Note that we do not impose any regularity assumptions on $A^{\alpha\beta}(x_1)$.
In this case, if a sufficiently smooth $(u,p)$ satisfies $\cL_0 u + \nabla p = 0$ in $\Omega \subset \bR^d$, we see that
\begin{equation}
							\label{eq0127_01}
D_1 \left( A^{1\beta} D_\beta u +
\left[
\begin{matrix}
p
\\
0
\\
\vdots
\\
0
\end{matrix}
\right]
\right)
= - \sum_{\alpha \ne 1} A^{\alpha\beta} D_{\alpha\beta} u -
\left[
\begin{matrix}
0
\\
D_2 p
\\
\vdots
\\
D_d p
\end{matrix}
\right]
\end{equation}
in $\Omega$.
Set $U = \left(U_1, U_2, \ldots, U_d\right)^{\operatorname{tr}}$, where
\begin{equation}
							\label{eq0829_01}
U_1 = \sum_{j=1}^d \sum_{\beta=1}^d A_{1j}^{1\beta} D_\beta u_j + p,
\quad
U_i = \sum_{j=1}^d \sum_{\beta=1}^d A_{ij}^{1\beta} D_\beta u_j,
\quad
i = 2, \ldots, d.
\end{equation}
That is,
$$
U = A^{1\beta}D_\beta u + (p,0,\ldots,0)^{\operatorname{tr}}.
$$

Throughout the paper, we write $D D_{x'}^k u$, $k=0,1, \ldots$, to denote $D^\vartheta u$,
where $\vartheta$ is a multi-index such that $\vartheta = (\vartheta_1, \ldots, \vartheta_d)$ with $\vartheta_1 = 0, 1$ and $|\vartheta| = k+1$.
As usual, we denote by $C^\tau(\Omega)$, $\tau \in (0,1)$, the H\"{o}lder space equipped with the norm
$$
\|u\|_{C^{\tau}(\Omega)} = \|u\|_{L_\infty(\Omega)} + [u]_{C^{\tau}(\Omega)},
$$
where $[u]_{C^\tau(\Omega)}$ is the H\"{o}lder semi-norm of $u$ defined by
$$
[u]_{C^\tau(\Omega)} = \sup_{\substack{x,y \in \Omega \\ x \ne y}} \frac{|u(x) - u(y)|}{|x-y|^\tau}.
$$

First, we prove the following lemma, a version of which was proved in \cite{DK15S} for the case $r = 1$ and $R=2$.
The objective is to ensure that the right-hand sides of the inequalities depend only on $(R-r)$, instead of $r$ or $R$.

\begin{lemma}
							\label{lem0829_2}
Let $0 < r < R$ and let $\ell$ be a constant.
\begin{enumerate}
\item[(i)] If $(u,p) \in W_2^1(B_R)^d \times L_2(B_R)$ satisfies
\begin{equation}
							\label{eq0229_01}
\left\{
\begin{aligned}
 \cL_0 u + \nabla p &= 0
\quad
\text{in}\,\,B_R,
\\
\operatorname{div} u &= \ell
\quad
\text{in}\,\,B_R,
\end{aligned}
\right.
\end{equation}
then 
\begin{equation}
							\label{eq0905_06}
\|Du\|_{L_\infty(B_r)}
\le N (R-r)^{-d/2}\|Du\|_{L_2(B_R)},	
\end{equation}
\begin{equation}
							\label{eq0905_08}
\|p\|_{L_\infty(B_r)}
\\
\le N (R-r)^{-d/2}\|Du\|_{L_2(B_R)} + N (R-r)^{-d/2}\|p\|_{L_2(B_R)},
\end{equation}
\begin{equation}
							\label{eq0902_04}
\left[D_{x'}u \right]_{C^{1/2}(B_r)} + \left[U\right]_{C^{1/2}(B_r)}
\le N (R-r)^{-(d+1)/2} \|Du\|_{L_2(B_R)},
\end{equation}
where $N=N(d,\delta)$.

\item[(ii)] If $(u,p) \in W_2^1(B_R^+)^d \times L_2(B_R^+)$ satisfies
\begin{equation}
							\label{eq0229_02}
\left\{
\begin{aligned}
\cL_0 u + \nabla p &= 0
\quad
\text{in}\,\,B_R^+,
\\
\operatorname{div} u &= \ell
\quad
\text{in}\,\,B_R^+,
\\
u &= 0
\quad
\text{on}\,\, B_R \cap \partial \bR^d_+,
\end{aligned}
\right.
\end{equation}
then 
\begin{equation}
							\label{eq0905_02}
\|Du\|_{L_\infty(B_r^+)}
\le N (R-r)^{-d/2}\|Du\|_{L_2(B_R^+)},
\end{equation}
\begin{equation}
							\label{eq0905_01}
\|p\|_{L_\infty(B_r^+)} \le N (R-r)^{-d/2}\|Du\|_{L_2(B_R^+)} + N (R-r)^{-d/2} \|p\|_{L_2(B_R^+)},
\end{equation}
\begin{equation}
							\label{eq0902_05}
\left[D_{x'}u \right]_{C^{1/2}(B_r^+)} + \left[U\right]_{C^{1/2}(B_r^+)}
\le N (R-r)^{-(d+1)/2} \|Du\|_{L_2(B_R^+)},
\end{equation}
where $N=N(d,\delta)$.
\end{enumerate}
\end{lemma}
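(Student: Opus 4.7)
The strategy is to reduce everything to the base case $r=1$, $R=2$, which is essentially the version established in \cite{DK15S}, and let the precise powers of $(R-r)$ emerge from a scaling argument combined with a simple covering of $B_r$ (resp.\ $B_r^+$).

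Set $s:=(R-r)/2$. For each fixed $x_0\in B_r$, define
$$
\tilde u(y):=u(x_0+sy),\qquad \tilde p(y):=s\,p(x_0+sy),\qquad y\in B_2,
$$
with rescaled coefficients $\tilde A^{\alpha\beta}(y_1):=A^{\alpha\beta}((x_0)_1+sy_1)$, which still depend only on $y_1$. A direct computation shows that $(\tilde u,\tilde p)$ solves the analogue of \eqref{eq0229_01} in $B_2$ with divergence $s\ell$, and $|x_0|\le r$ gives $B_{2s}(x_0)\subset B_R$. The base case from \cite{DK15S} then supplies
$$
\|D\tilde u\|_{L_\infty(B_1)}\le N\|D\tilde u\|_{L_2(B_2)},\qquad
\|\tilde p-(\tilde p)_{B_2}\|_{L_\infty(B_1)}\le N\|D\tilde u\|_{L_2(B_2)},
$$
$$
[D_{x'}\tilde u]_{C^{1/2}(B_1)}+[\tilde U]_{C^{1/2}(B_1)}\le N\|D\tilde u\|_{L_2(B_2)},
$$
with $N=N(d,\delta)$.

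Undoing the scaling through the identities $\|D\tilde u\|_{L_\infty(B_1)}=s\|Du\|_{L_\infty(B_s(x_0))}$, $\|D\tilde u\|_{L_2(B_2)}=s^{1-d/2}\|Du\|_{L_2(B_{2s}(x_0))}$, and the analogous ones $\tilde p(y)=sp(x_0+sy)$, $D_{x'}\tilde u=sD_{x'}u(x_0+s\cdot)$, $\tilde U=sU(x_0+s\cdot)$, one obtains at every $x_0\in B_r$ the local estimates
$$
\|Du\|_{L_\infty(B_s(x_0))}\le N(R-r)^{-d/2}\|Du\|_{L_2(B_R)},
$$
$$
\|p\|_{L_\infty(B_s(x_0))}\le N(R-r)^{-d/2}\bigl(\|Du\|_{L_2(B_R)}+\|p\|_{L_2(B_R)}\bigr),
$$
$$
[D_{x'}u]_{C^{1/2}(B_s(x_0))}+[U]_{C^{1/2}(B_s(x_0))}\le N(R-r)^{-(d+1)/2}\|Du\|_{L_2(B_R)},
$$
where the shift $(\tilde p)_{B_2}$ is absorbed via $|(\tilde p)_{B_2}|\le\|\tilde p\|_{L_2(B_2)}$. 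Taking $\sup_{x_0\in B_r}$ of the first two yields \eqref{eq0905_06} and \eqref{eq0905_08}. For \eqref{eq0902_04}, given $x\neq y$ in $B_r$, split according to whether $|x-y|<s$ or $|x-y|\ge s$: in the former case both points lie in $B_s(x)\subset B_{2s}(x)\subset B_R$ and the local H\"older estimate applies; in the latter case, use the $L_\infty$ bounds just obtained and divide by $|x-y|^{1/2}\ge s^{1/2}$. Both subcases produce the same constant $N(R-r)^{-(d+1)/2}$.

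The half-space case (ii) is handled identically. For $x_0\in B_r^+$ with $(x_0)_1\ge s$, the interior argument runs verbatim; for $(x_0)_1<s$, center the scaling at $\bar x_0:=(0,x_0')\in\partial\bR^d_+$, note that $|\bar x_0|\le r$ gives $B_{2s}^+(\bar x_0)\subset B_R^+$ and $B_s(x_0)\cap\bR^d_+\subset B_{2s}^+(\bar x_0)$, and apply the boundary base case from \cite{DK15S} (the condition $u=0$ on $\{y_1=0\}$ is preserved by translation in $x'$ and scaling). There is no genuine obstacle beyond bookkeeping; the one point to be careful about is the absence of a $\|p\|_{L_2}$ term on the right of \eqref{eq0902_04} and \eqref{eq0902_05}, which is justified by shift-invariance of the H\"older semi-norm together with the pressure control $\|p-(p)_{B_R}\|_{L_2(B_R)}\le N\|Du\|_{L_2(B_R)}$ obtained from $\nabla p=-\cL_0 u$ and the Bogovskii-type inversion of the divergence (already embedded in the base case in \cite{DK15S}).
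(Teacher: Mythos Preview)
Your proposal is correct and follows essentially the same approach as the paper: reduce to the base case $r=1$, $R=2$ (derived from \cite[Lemmas 4.3--4.4]{DK15S} together with the divergence relation and the ellipticity of $[A^{11}_{ij}]_{i,j=2}^d$), then scale by $s=(R-r)/2$ around each point, cover $B_r$ (resp.\ $B_r^+$), and for the H\"older estimates split according to whether $|x-y|$ is small or large compared to $s$; in the half-space case split further according to the distance of $x$ to $\partial\bR^d_+$. The paper uses slightly different thresholds ($(R-r)/8$, $(R-r)/4$) and is a bit more explicit in deriving the full $\|Du\|_{L_\infty}$ bound from the partial estimate in \cite{DK15S}, but the argument is the same.
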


\begin{proof}
We first prove \eqref{eq0905_06} and \eqref{eq0905_08} when $r=1$ and $R=2$.
By Lemma 4.3 of \cite{DK15S}, we have
\begin{equation}
                        \label{eq0905_03}
\|D_{x'}u\|_{L_\infty(B_1)}+\sum_{i=2}^d\|U_i\|_{L_\infty(B_1)}
+[U_1]_{C^{1/2}(B_1)}
\le N \|Du\|_{L_2(B_2)},
\end{equation}
where $N=N(d,\delta)$.
Then using the relation
$$
\operatorname{div} u = \ell,
$$
we obtain 
\begin{align}
                \label{eq0905_04}
&\|D_1u_1\|_{L_\infty(B_1)} \le |\ell| + N\|Du\|_{L_2(B_2)}\nonumber\\
&\le N\|\operatorname{div} u\|_{L_2(B_2)}+N\|Du\|_{L_2(B_2)}\le N\|Du\|_{L_2(B_2)}.
\end{align}
From \eqref{eq0829_01}, we have
\begin{equation*}
				%			\label{eq0915_01}
\sum_{j=2}^d A_{ij}^{11} D_1 u_j=U_i-\sum_{j=1}^d \sum_{\beta=2}^dA_{ij}^{1\beta} D_\beta u_j-A^{11}_{i1}D_1 u_1,
\quad
i=2,\ldots,d.
\end{equation*}
By the ellipticity condition, $[A^{11}_{ij}]_{i,j=2}^d$ is nondegenerate. Hence, using \eqref{eq0905_03} and \eqref{eq0905_04}, we deduce that
\begin{equation}
                \label{eq0905_05}
\|D_1u_j\|_{L_\infty(B_1)} \le N\|Du\|_{L_2(B_2)}
\end{equation}
for $j=2,\ldots,d$.
To estimate the $L_\infty$-norm of $p$, we use the interpolation inequality
$$
\|U_1\|_{L_\infty(B_1)}\le N\|U_1\|_{L_2(B_1)}+N[U_1]_{C^{1/2}(B_1)},
$$
the definition of $U_1$, and \eqref{eq0905_03} to obtain
$$
\|U_1\|_{L_\infty(B_1)}\le N\|Du\|_{L_2(B_2)}+N\|p\|_{L_2(B_1)}.
$$
This inequality, along with the definition of $U_1$ and \eqref{eq0905_03}, \eqref{eq0905_04}, and \eqref{eq0905_05} gives
$$
\|p\|_{L_\infty(B_1)}\le N\|Du\|_{L_2(B_2)}+N\|p\|_{L_2(B_1)}.
$$
From this, \eqref{eq0905_03}, \eqref{eq0905_04}, and \eqref{eq0905_05}, we conclude that \eqref{eq0905_06} and \eqref{eq0905_08} are satisfied when $r=1$ and $R=2$.

To prove \eqref{eq0905_06} for general $0 < r < R$, we use a scaling argument.
For $x \in B_r$, we see that $B_{(R-r)/2}(x) \subset B_{R-r}(x) \subset B_R$.
Then, by using the following scaling with a translation
\begin{equation}
							\label{eq0905_07}
\hat{u}(y) = 2(R-r)^{-1} u\left( \frac{R-r}{2}y + x \right), \quad \hat{p}(y) = p\left( \frac{R-r}{2}y + x\right)
\end{equation}
and \eqref{eq0905_06} for $r=1$ and $R=2$, we have
\begin{multline*}
\|Du\|_{L_\infty(B_{(R-r)/2}(x))}
\le N (R-r)^{-d/2}\|Du\|_{L_2(B_{R-r}(x))}
\\
\le N (R-r)^{-d/2}\|Du\|_{L_2(B_R)},
\end{multline*}
which implies \eqref{eq0905_06} for general $0<r<R$.
Similarly, we obtain \eqref{eq0905_08} for general $0<r<R$.

For the proof of \eqref{eq0902_04}, see below that of \eqref{eq0902_05}.

Now, we consider \eqref{eq0905_02} and \eqref{eq0905_01}.
By using Lemma 4.4 in \cite{DK15S} and following the same steps as in the proof of \eqref{eq0905_06} and \eqref{eq0905_08} for the case $r=1$ and $R=2$, we obtain \eqref{eq0905_02} and \eqref{eq0905_01} for $r=1$ and $R=2$.
For general $0<r<R$, let $x = (x_1,x') \in B_r^+$.
We consider two cases: $x_1 \ge (R-r)/2$ and $x_1 < (R-r)/2$.

If $x_1 \ge (R-r)/2$, then $B_{(R-r)/2}(x) \subset B_R^+$.
By \eqref{eq0905_06} we have
\begin{multline}
							\label{eq0902_02}
\|D u\|_{L_\infty(B_{(R-r)/4}(x))}
%\\
\le N(R-r)^{-d/2}\|Du\|_{L_2(B_{(R-r)/2}(x))}
\\
\le N(R-r)^{-d/2}\|Du\|_{L_2(B_R^+)}.
\end{multline}

If $x_1 < (R-r)/2$, it follows that
$$
x \in B_{(R-r)/2}^+(0,x') \subset B_{R-r}^+(0,x') \subset B_R^+.
$$
Then, we use \eqref{eq0905_02} for $r=1$ and $R=2$ and a scaling as in \eqref{eq0905_07} to obtain
\begin{multline}
							\label{eq0902_03}
\|Du\|_{L_\infty(B_{(R-r)/2}^+(0,x'))}
%\\
\le N(R-r)^{-d/2}\|Du\|_{L_2(B_{R-r}^+(0,x'))}
\\
\le N(R-r)^{-d/2}\|Du\|_{L_2(B_R^+)}.
\end{multline}
From \eqref{eq0902_02} and \eqref{eq0902_03}, we arrive at \eqref{eq0905_02}.
We similarly prove \eqref{eq0905_01} for general $0<r<R$.

To prove \eqref{eq0902_05}, let $x,y \in B_r^+$.
We consider two cases: $|x-y| \ge (R-r)/8$ and $|x-y| < (R-r)/8$.

If $|x-y| \ge (R-r)/8$, from the estimate \eqref{eq0905_02} it follows that
\begin{equation}
							\label{eq0905_09}
\frac{|D_{x'}u(x) - D_{x'}u(y)|}{|x-y|^{1/2}} + \frac{|U_i(x) - U_i(y)|}{|x-y|^{1/2}}
\le N(R-r)^{-(d+1)/2}\|Du\|_{L_2(B_R^+)},
\end{equation}
where $i=2,\ldots,d$.
For $U_1$, we see that
$$
\frac{|U_1(x)-U_1(y)|}{|x-y|^{1/2}} \le N (R-r)^{-1/2} \|U_1 - c\|_{L_\infty(B_r^+)}
$$
for any constant $c$, where
$$
\|U_1 - c\|_{L_\infty(B_r^+)} \le N\|Du\|_{L_\infty(B_r^+)} + \|p - c\|_{L_\infty(B_r^+)}.
$$
Since the system \eqref{eq0229_02} is satisfied by $(u, p-c)$ in place of $(u,p)$, from \eqref{eq0905_01} with $(u,p-c)$ we have
$$
\|p - c\|_{L_\infty(B_r^+)} \le N(R-r)^{-d/2}\|Du\|_{L_2(B_R^+)} + N(R-r)^{-d/2}\|p-c\|_{L_2(B_R^+)}.
$$
On the other hand, from Lemma 3.5 in \cite{DK15S} with $c = (p)_{B_R^+}$, it follows that
$$
\|p-c\|_{L_2(B_R^+)} \le N\|Du\|_{L_2(B_R^+)},
$$
where $N=N(d,\delta)$.
Combining the last three inequalities with \eqref{eq0905_02}, we obtain 
\begin{equation}
							\label{eq0905_10}
\frac{|U_1(x)-U_1(y)|}{|x-y|^{1/2}} \le N(R-r)^{-(d+1)/2}\|Du\|_{L_2(B_R^+)},
\end{equation}
provided that $|x-y| \ge (R-r)/8$.

If $|x-y| < (R-r)/8$, we consider two sub-cases: $x_1 \ge (R-r)/4$ and $x_1 < (R-r)/4$.
If $x_1 \ge (R-r)/4$, then
$$
y \in B_{(R-r)/8}(x) \subset B_{(R-r)/4}(x) \subset B_R^+.
$$
Thus, by \cite[Lemma 4.3]{DK15S} with a scaling and translation it follows that
\begin{multline}
							\label{eq0905_11}
[D_{x'}u]_{C^{1/2}(B_{(R-r)/8}(x))} + [U]_{C^{1/2}(B_{(R-r)/8}(x))}
\\
\le N(R-r)^{-(d+1)/2}\|Du\|_{L_2(B_{(R-r)/4}(x))} \le N(R-r)^{-(d+1)/2}\|Du\|_{L_2(B_R^+)}.
\end{multline}
If $x_1< (R-r)/4$, then
$$
x,y \in B_{(R-r)/2}^+ (0,x') \subset B_{R-r}^+(0,x') \subset B_R^+.
$$
In this case, we use \cite[Lemma 4.4]{DK15S} with a scaling and translation to get
\begin{multline}
							\label{eq0905_12}
[D_{x'}u]_{C^{1/2}(B_{(R-r)/2}^+(0,x'))} + [U]_{C^{1/2}(B_{(R-r)/2}^+(0,x'))}
\\
\le N(R-r)^{-(d+1)/2}\|Du\|_{L_2(B_{R-r}^+(0,x'))} \le N(R-r)^{-(d+1)/2}\|Du\|_{L_2(B_R^+)}.
\end{multline}
From \eqref{eq0905_09}, \eqref{eq0905_10}, \eqref{eq0905_11}, and \eqref{eq0905_12}, we conclude \eqref{eq0902_05}.
The lemma is proved.
\end{proof}

The lemma below shows that the estimates in Lemma \ref{lem0829_2} also hold under the assumption that $(u,p) \in W_q^1(B_R)^d \times L_q(B_R)$ or $(u,p) \in W_q^1(B_R^+)^d \times L_q(B_R^+)$, where $q \in (1,\infty)$.

\begin{lemma}
							\label{lem0829_3}
Let $0<r<R$ and $1 < q < \infty$, and let $\ell$ be a constant.
\begin{enumerate}
\item[(i)] Let $(u,p) \in W_q^1(B_R)^d \times L_q(B_R)$ satisfy \eqref{eq0229_01}.
Then we have
\begin{align*}
\|D u\|_{L_\infty(B_r)}
&\le N(R-r)^{-d} \|Du\|_{L_1(B_R)},
\\
\|p\|_{L_\infty(B_r)} &\le N(R-r)^{-d}\|Du\|_{L_1(B_R)} + N(R-r)^{-d}\|p\|_{L_1(B_R)},
\end{align*}
$$
\left[D_{x'}u \right]_{C^{1/2}(B_r)} + \left[U\right]_{C^{1/2}(B_r)}
\le N (R-r)^{-d-1/2} \|Du\|_{L_1(B_R)},	
$$
where $N=N(d,\delta)$.

\item[(ii)] Let $(u,p) \in W_q^1(B_R^+)^d \times L_q(B_R^+)$ satisfy \eqref{eq0229_02}.
The same inequalities as in (i) hold with $B_r^+$ and $B_R^+$ in place of $B_r$ and $B_R$, respectively.
\end{enumerate}
\end{lemma}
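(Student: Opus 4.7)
The plan is to deduce Lemma \ref{lem0829_3} from Lemma \ref{lem0829_2} in two steps: first upgrade the local integrability of $(u,p)$ from $W_q^1 \times L_q$ to $W_2^1 \times L_2$ on strictly smaller concentric balls (so that Lemma \ref{lem0829_2} is applicable), then use H\"older's inequality and a standard Young-plus-iteration argument to trade the $L_2$-norm on the right-hand side for an $L_1$-norm, with the exponent of $(R-r)^{-1}$ increasing from $d/2$ to $d$.

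\emph{Step 1 (local $L_2$ regularity).} For $q \ge 2$ there is nothing to do, since $W_q^1(B_R) \subset W_2^1(B_R)$ and $L_q(B_R) \subset L_2(B_R)$ on the bounded ball, so Lemma \ref{lem0829_2} applies directly. For $1 < q < 2$, I would show that $(u,p)$ belongs to $W_2^1(B_{R'})^d \times L_2(B_{R'})$ for every $R' < R$. Since $A^{\alpha\beta}$ depends only on $x_1$, the system is translation invariant in $x'$, so mollification in the $x'$-variables produces solutions of the same homogeneous system that are $C^\infty$ in $x'$; combining this smoothness with the algebraic relation \eqref{eq0127_01} (and $\operatorname{div} u = \ell$) lets one control $x_1$-derivatives in terms of tangential derivatives and $p$, producing local $L_\infty$ bounds uniformly in the mollification parameter, and in particular yielding the desired $L_2$ integrability on $B_{R'}$. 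The same argument works in the half-ball setting, using the homogeneous Dirichlet condition on the flat part of $\partial B_R^+$.

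\emph{Step 2 (absorption from $L_2$ to $L_1$).} Set $M(r) := \|Du\|_{L_\infty(B_r)}$, which is finite on $(0,R)$ by Step 1 and Lemma \ref{lem0829_2}(i). For $0 < r < \rho < R$, the estimate \eqref{eq0905_06} gives
\[
M(r) \le N(\rho-r)^{-d/2}\|Du\|_{L_2(B_\rho)},
\]
while H\"older's inequality yields $\|Du\|_{L_2(B_\rho)}^2 \le M(\rho)\|Du\|_{L_1(B_R)}$. Young's inequality then delivers the absorption bound
\[
M(r) \le \tfrac{1}{2}\, M(\rho) + N(\rho-r)^{-d}\|Du\|_{L_1(B_R)}, \qquad 0 < r < \rho < R,
\]
and a standard iteration lemma (see, e.g., \cite[Ch.~V]{MR717034}) produces $M(r) \le N(R-r)^{-d}\|Du\|_{L_1(B_R)}$. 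The $L_\infty$-bound on $p$ is obtained by applying the very same iteration to the sum $\|p\|_{L_\infty(B_r)} + \|Du\|_{L_\infty(B_r)}$, starting from \eqref{eq0905_08} in place of \eqref{eq0905_06}. The H\"older-seminorm estimate follows identically from \eqref{eq0902_04}, noting that the exponent of $(\rho-r)^{-1}$ there is $(d+1)/2$, so after the same Young/iteration step one gets $(R-r)^{-d-1/2}\|Du\|_{L_1(B_R)}$ on the right-hand side. Part (ii) is handled verbatim, with $B_r,B_\rho,B_R$ replaced by $B_r^+,B_\rho^+,B_R^+$ and Lemma \ref{lem0829_2}(ii) used in place of (i).

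\emph{Main obstacle.} The only genuinely delicate step is the bootstrap in Step 1 when $q<2$: because the coefficients $A^{\alpha\beta}(x_1)$ are merely measurable, one cannot differentiate the equation in $x_1$, and the upgrade must proceed through mollification in $x'$ combined with the algebraic information supplied by the system itself (and the divergence constraint). Once local $L_2$ integrability is in hand, Steps 2 reduces to the standard absorption machinery and the rest of the proof is routine.
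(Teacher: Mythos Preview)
Your approach is essentially the one taken in the paper: mollify in $x'$ to reach the $W_2^1\times L_2$ regime where Lemma~\ref{lem0829_2} applies, then iterate to replace the $L_2$-norm on the right by an $L_1$-norm. Two remarks, one on organization and one on a small slip.

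\textbf{Organization.} Your Step~1 claims ``local $L_\infty$ bounds uniformly in the mollification parameter'' \emph{before} Step~2, but that uniformity is not available at that stage: the anisotropic Sobolev argument only gives $(u^{(\varepsilon)},p^{(\varepsilon)})\in W_2^1\times L_2$ qualitatively, with $L_2$-bounds that blow up as $\varepsilon\to 0$ (they involve high tangential derivatives). The paper therefore carries out the iteration directly on the mollified pair $(u^{(\varepsilon)},p^{(\varepsilon)})$---this is where the $L_1$-norm, which \emph{is} stable under mollification, enters on the right---and only afterwards lets $\varepsilon\to 0$. So Steps~1 and~2 must be interleaved rather than run sequentially on $(u,p)$ itself. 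Once you reorganize in this way, your $L_\infty$-based absorption lemma and the paper's $L_2$-based geometric-series summation are equivalent variants of the same standard trick; either works.

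\textbf{The H\"older seminorm.} The $C^{1/2}$ estimate does not follow by ``the same Young/iteration step,'' since the left side of \eqref{eq0902_04} is a seminorm of $D_{x'}u$ and $U$, not $M(\rho)$, so there is nothing to absorb. Instead, once the $L_\infty$-bound $M(r)\le N(R-r)^{-d}\|Du\|_{L_1(B_R)}$ is in hand, a single application of \eqref{eq0902_04} on the pair $r<(R+r)/2$, combined with $\|Du\|_{L_2(B_{(R+r)/2})}\le M((R+r)/2)^{1/2}\|Du\|_{L_1(B_R)}^{1/2}$, gives the stated exponent $-d-1/2$ without further iteration.
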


\begin{proof}
We use an iteration argument. It is clear that we can assume $q \in (1,2)$.
We only deal with the second assertion.
Let
$$
r_0 = (R+2r)/3, \quad r_1  = (2R+r)/3,
$$
and take a non-negative infinitely differentiable function $\varphi$ defined on $\bR^{d-1}$ such that it has a compact support in $B_1' = \{x'\in \bR^{d-1}: |x'| < 1\}$ and $\int_{\bR^{d-1}} \varphi \, dx'= 1$.
For a sufficiently small $\varepsilon \in (0,(R-r)/3)$, set
$$
u^{(\varepsilon)}(x_1,x') = \int_{\bR^{d-1}} u(x_1,x'-y')\varphi_\varepsilon(y') \, dy',
$$
$$
p^{(\varepsilon)}(x_1,x') = \int_{\bR^{d-1}} p(x_1,x'-y')\varphi_\varepsilon(y') \, dy',
$$
where $(x_1,x') \in B_{r_1}^+$ and $\varphi_\varepsilon(y') = \varepsilon^{1-d}\varepsilon(y'/\varepsilon)$.
That is, they are the mollifications of $(u,p)$ with respect to the variables $x' \in \bR^{d-1}$.
Note that
$$
D(u^{(\varepsilon)}) = (Du)^{(\varepsilon)},
\quad
U^{(\varepsilon)} = A^{1\beta} D_\beta u^{(\varepsilon)} + (p^{(\varepsilon)},0,\ldots,0)^{\operatorname{tr}}
$$
in $B_{r_1}^+$.
Since the coefficients of $\cL_0$ are functions of only $x_1$, we see that $(u^{(\varepsilon)}, p^{(\varepsilon)})$ satisfies the equation in \eqref{eq0229_02} with $B_{r_1}^+$ in place of $B_R^+$.
We first claim that
\begin{equation}
							\label{eq0913_02}
(u^{(\varepsilon)}, p^{(\varepsilon)}) \in W_2^1(B_{r_1}^+)^d \times L_2(B_{r_1}^+).
\end{equation}
Note that, for $i=0,1$ and for any non-negative integer $k$,
\begin{equation}
							\label{eq0825_01}
D_1^i D_{x'}^k u^{(\varepsilon)}, \,\, D_{x'}^k p^{(\varepsilon)} \in L_q(B_{r_1}^+).	
\end{equation}
From this and an anisotropic Sobolev embedding theorem (see, for instance, \cite[Lemma 3.5]{MR2800569}) it follows that
\begin{equation}
							\label{eq0825_02}
u^{(\varepsilon)}, \,\, D_{x'} u^{(\varepsilon)} \in L_2(B_{r_1}^+).	
\end{equation}
This together with the relation $\operatorname{div} u^{(\varepsilon)} = \ell$ in $B_{r_1}^+$ shows that
\begin{equation}
							\label{eq0825_03}
D_1 u_1^{(\varepsilon)} \in L_2(B_{r_1}^+).
\end{equation}
Next, by the definition of $U_i$, $i=1,\ldots,d$, we see that, for any non-negative integer $k$,
$$
D_{x'}^k U_i^{(\varepsilon)} \in L_q(B_{r_1}^+),
\quad
i=1,\ldots,d.
$$
From \eqref{eq0825_01} and the relation \eqref{eq0127_01} we also see that for any non-negative integer $k$,
$$
D_1 D_{x'}^k U_i^{(\varepsilon)} \in L_q(B_{r_1}^+),
\quad
i=1,\ldots,d.
$$
Again by the anisotropic Sobolev embedding theorem, we obtain that
\begin{equation}
							\label{eq0913_01}
U^{(\varepsilon)}_i \in L_2(B_{r_1}^+),
\quad
i=1,\ldots,d.
\end{equation}
The definition of $U_i$, $i=2,\ldots,d$, shows that
$$
\sum_{j=2}^d A_{ij}^{11} D_1u_j^{(\varepsilon)} = U_i^{(\varepsilon)} - \sum_{j=1}^d\sum_{\beta=2}^d A_{ij}^{1\beta} D_{\beta}u_j^{(\varepsilon)} - A_{i1}^{11} D_1 u_1^{(\varepsilon)},
\quad
i=2,\ldots, d.
$$
By the ellipticity of $A^{\alpha\beta}$, \eqref{eq0825_02}, \eqref{eq0825_03}, and \eqref{eq0913_01}, we have
$$
D_1u_j^{(\varepsilon)} \in L_2(B_{r_1}^+),
\quad
j=2,\ldots,d.
$$
Finally, from the definition of $U_1$ and the fact that $U_1^{(\varepsilon)}, Du^{(\varepsilon)} \in L_2(B_{r_1}^+)$ shown above, we conclude that
$$
p^{(\varepsilon)} \in L_2(B_{r_1}^+).
$$
Hence, \eqref{eq0913_02} is proved.

Set
$$
\Gamma_0 = r_0,
\quad
\Gamma_m = r_0 + (r_1-r_0)\sum_{k=1}^m\frac{1}{2^k},
\quad
m=1,2,\ldots,
$$
and denote $\Omega_m = B_{\Gamma_m}^+$.
By applying Lemma \ref{lem0829_2} to $(u^{(\varepsilon)}, p^{(\varepsilon)})$ which satisfies \eqref{eq0913_02} and \eqref{eq0229_02} in $\Omega_{m+1}$, we have
$$
\|D u^{(\varepsilon)} \|_{L_\infty(\Omega_m)} \le N_0 \left(\frac{2^m}{r_1-r_0}\right)^{d/2} \|Du^{(\varepsilon)}\|_{L_2(\Omega_{m+1})},
\quad
m=0,1,2,\ldots,
$$
where $N_0$ is the constant in Lemma \ref{lem0829_2} that depends only on $d$ and $\delta$.
On the other hand, for any $ \varepsilon_0 > 0$ we have
\begin{multline*}
				%			\label{eq0914_01}	
\|D u^{(\varepsilon)}\|_{L_2(\Omega_m)}
\le \frac{\varepsilon_0}{N_0}\left(\frac{r_1-r_0}{2^m}\right)^{d/2} \|D u^{(\varepsilon)}\|_{L_\infty(\Omega_m)}
\\
+ \frac{N_0}{4\varepsilon_0} \left(\frac{2^m}{r_1-r_0}\right)^{d/2}\|D u^{(\varepsilon)}\|_{L_1(\Omega_m)}.
\end{multline*}
By combining the above two inequalities, it follows that
\begin{equation*}
\|Du^{(\varepsilon)}\|_{L_2(\Omega_m)} \le
 \varepsilon_0 \|Du^{(\varepsilon)}\|_{L_2(\Omega_{m+1})} + \frac{N}{\varepsilon_0} \left(\frac{2^m}{r_1-r_0}\right)^{d/2} \|Du^{(\varepsilon)}\|_{L_1(\Omega_{m})},
\end{equation*}
where $N=N(d,\delta)$.
We multiply both sides by $\varepsilon_0^m$ and make summations with respect to $m=0,1,\ldots$ to get
\begin{align*}
&\|Du^{(\varepsilon)}\|_{L_2(B_{r_0}^+)} + \sum_{m=1}^\infty \varepsilon_0^m \|Du^{(\varepsilon)}\|_{L_2(\Omega_m)} \\
&\le \sum_{m=1}^\infty \varepsilon_0^m \|Du^{(\varepsilon)}\|_{L_\infty(\Omega_m)}+ N (r_1-r_0)^{-d/2} \|Du^{(\varepsilon)}\|_{L_1(B_{r_1}^+)} \varepsilon_0^{-1} \sum_{m=0}^\infty (\varepsilon_0 2^{d/2})^m.
\end{align*}
Take a sufficiently small $\varepsilon_0 > 0$ so that $\varepsilon_0 2^{d/2} < 1$.
Then we see that the summations above are all finite and, by removing the same terms from both sides of the inequality, we get
\begin{equation}
							\label{eq0914_02}
\|Du^{(\varepsilon)}\|_{L_2(B_{r_0}^+)} \le N(r_1-r_0)^{-d/2} \|Du^{(\varepsilon)}\|_{L_1(B_{r_1}^+)},
\end{equation}
where $N=N(d,\delta)$.
Similarly, we also obtain
\begin{equation*}
%							\label{eq0914_04}
\|p^{(\varepsilon)}\|_{L_2(B_{r_0}^+)} \le N(R-r)^{-d/2} \|Du^{(\varepsilon)}\|_{L_1(B_{r_1}^+)}
+ N(R-r)^{-d/2} \|p^{(\varepsilon)}\|_{L_1(B_{r_1}^+)}.
\end{equation*}
This together with \eqref{eq0914_02} and Lemma \ref{lem0829_2} applied to $(u^{(\varepsilon)},p^{(\varepsilon)}$) implies the desired inequalities in the lemma with $u^{(\varepsilon)}$, $p^{(\varepsilon)}$,
and $U^{(\varepsilon)}$ in place of $u$, $p$,
and $U$.
To finish the proof, we let $\varepsilon \to 0$.
Indeed, we check that
$Du^{(\varepsilon)} \to Du$ and $p^{(\varepsilon)} \to p$ in $L_1(B_{r_1}^+)$.
Then, from the estimates for $(u^{(\varepsilon)},p^{(\varepsilon)})$, it follows that $\{Du^{(\varepsilon)}\}$ and $\{p^{(\varepsilon)}\}$ are Cauchy sequences in $L_\infty(B_r^+)$, and $\{D_{x'}u^{(\varepsilon)}\}$ and $\{U^{(\varepsilon)}\}$ are Cauchy sequences in $C^{1/2}(B_r^+)$.
By letting $\varepsilon \to 0$, we finally obtain the $L_\infty$-estimates for $Du$ and $p$, and the H\"{o}lder semi-norm estimates for $D_{x'}u$ and $U$.
\end{proof}

\section{$L_q$-estimates for Stokes system}
							\label{sec05}

Let $\cL$ be the elliptic operator from \eqref{eq0812_01} and $\Omega$ be a bounded Reifenberg flat domain satisfying Assumption \ref{assum1004}.
We consider
\begin{equation*}
%							\label{eq10.49}
\begin{cases}
\cL u + \nabla p = D_\alpha f_\alpha
\quad
&\text{in}\,\,\Omega,
\\
\operatorname{div} u = g
\quad
&\text{in}\,\,\Omega,
\\
u = 0
\quad
&\text{on} \,\, \partial \Omega,
\end{cases}
\end{equation*}
where $f_\alpha,g\in L_q(\Omega)$, $(g)_\Omega=0$, and $q \in (1,\infty)$.

\begin{proposition}
							\label{prop0719_1}
Let $\rho \in (0,1/120)$, $q \in (2,\infty)$, and $\Omega$ be a bounded domain with $\operatorname{diam}\Omega \le K$. Suppose that $(\cL, \Omega$) satisfies Assumptions \ref{assum1004} and \ref{assum0711_1} ($\rho$) with a constant $K_1$ in the estimate \eqref{eq0711_00}, and $(u,p) \in W_2^1(\Omega)^d \times L_2(\Omega)$ satisfies
\begin{equation}
							\label{eq10.49}
\begin{cases}
\cL u + \nabla p = D_\alpha f_\alpha
\quad
&\text{in}\,\,\Omega,
\\
\operatorname{div} u = g
\quad
&\text{in}\,\,\Omega,
\\
u = 0
\quad
&\text{on} \,\, \partial \Omega,
\end{cases}
\end{equation}
where $f_\alpha, g\in L_q(\Omega)$ and $(g)_\Omega=0$.
Then, we have the following.
\begin{enumerate}
\item[(i)] For $x_0 \in \Omega$ and $r \in (0, R_1/8]$ such that $B_r(x_0) \subset \Omega$,
there exist
$$
(w,p_1), (v,p_2) \in W_2^1(B_r(x_0))^d \times L_2(B_r(x_0))
$$
such that $(u,p) = (w,p_1) + (v,p_2)$ in $B_r(x_0)$ and
\begin{align}
							\label{eq0715_01}
&( |Dw|^2 + |p_1|^2 )_{B_r(x_0)}^{\frac{1}{2}} \nonumber\\
&\le N \rho^{\frac{1}{2\nu}} ( |Du|^2 + |p|^2 )^{\frac{1}{2}}_{\Omega_{8r}(x_0)}+N(|f_\alpha|^{2\mu} + |g|^{2\mu})_{\Omega_{8r}(x_0)}^{\frac{1}{2\mu}},\\
							\label{eq0715_02}
&\|Dv\|_{L_\infty(B_{r/2})} + \|p_2\|_{L_\infty(B_{r/2})}\nonumber
\\
&\le N (\rho^{\frac 1 {2\nu}}+1)( |Du|^2 + |p|^2 )^{\frac{1}{2}}_{\Omega_{8r}(x_0)} + N (|f_\alpha|^{2\mu} + |g|^{2\mu})_{\Omega_{8r}(x_0)}^{\frac{1}{2\mu}},
\end{align}
where $N>0$ and $\mu,\nu>1$ are constants depending only on $d$, $\delta$, $K_1$, and $q$, and $\mu$ satisfies $2\mu<q$ and $1/\mu + 1/\nu = 1$.

\item[(ii)] For $x_0 \in \partial \Omega$ and $r \in (0,R_1]$, there exist
$$
(w,p_1), (v,p_2) \in W_2^1(\Omega_{r/10}(x_0))^d \times L_2(\Omega_{r/10}(x_0))
$$
such that $(u,p) = (w,p_1) + (v,p_2)$ in $\Omega_{r/10}(x_0)$ and
\begin{align}
							\label{eq0718_01}
&( |Dw|^2 + |p_1|^2 )_{\Omega_{r/10}(x_0)}^{\frac{1}{2}} \nonumber\\
&\le N \rho^{\frac{1}{2\nu}} ( |Du|^2 + |p|^2 )^{\frac{1}{2}}_{\Omega_r(x_0)} + N (|f_\alpha|^{2\mu} + |g|^{2\mu})_{\Omega_r(x_0)}^{\frac{1}{2\mu}},\\
							\label{eq0718_02}
&\|Dv\|_{L_\infty(\Omega_{r/20}(x_0))} + \|p_2\|_{L_\infty(\Omega_{r/20}(x_0))}\nonumber
\\
&\le N (\rho^{\frac{1}{2\nu}}+1) ( |Du|^2 + |p|^2 )^{\frac{1}{2}}_{\Omega_r(x_0)} + N (|f_\alpha|^{2\mu} + |g|^{2\mu})_{\Omega_r(x_0)}^{\frac{1}{2\mu}},	
\end{align}
where $N>0$ and $\mu,\nu>1$ are constants depending only on $d$, $\delta$, $K_1$, and $q$, and $\mu$ satisfies $2\mu<q$ and $1/\mu + 1/\nu = 1$.
\end{enumerate}
\end{proposition}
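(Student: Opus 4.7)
The plan is a Campanato-style decomposition: let $(v,p_2)$ be a coefficient-frozen piece eligible for the $L_\infty$-estimates of Lemma \ref{lem0829_3}, and let $(w,p_1) := (u,p) - (v,p_2)$ absorb the perturbation, which is small in $L_2$ thanks to the smallness of $\dashint_{B_r}|A-\bar{A}|$ combined with the Gehring higher integrability of $Du$ provided by Lemma \ref{lem0715_1}.

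For the interior case, in the coordinate system from Assumption \ref{assum0711_1}(i) I set
$$\bar{A}^{\alpha\beta}(x_1) := \dashint_{B_r'(x_0')} A^{\alpha\beta}(x_1,z')\,dz', \qquad \bar{\cL} v := D_\alpha\bigl(\bar{A}^{\alpha\beta} D_\beta v\bigr),$$
and let $(v,p_2)\in W_2^1(B_r(x_0))^d\times L_2(B_r(x_0))$ solve $\bar{\cL} v+\nabla p_2=0$, $\operatorname{div} v=(g)_{B_r(x_0)}$ in $B_r(x_0)$ with $v-u\in\mathring{W}_2^1(B_r(x_0))^d$; existence follows by applying Lemma \ref{lem0225_1} to $\tilde{v}:=v-u$ on the ball, where the divergence-equation constant is universal. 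Normalize $p_2$ so that $(p_1)_{B_r(x_0)}=0$. A direct subtraction then yields
$$\cL w+\nabla p_1 = D_\alpha\bigl(f_\alpha+(\bar{A}^{\alpha\beta}-A^{\alpha\beta})D_\beta v\bigr),\quad \operatorname{div} w = g-(g)_{B_r(x_0)},\quad w|_{\partial B_r(x_0)}=0,$$
so Lemma \ref{lem0225_1} bounds $\|Dw\|_{L_2(B_r)}+\|p_1\|_{L_2(B_r)}$ by $N(\|f_\alpha\|_{L_2(B_r)}+\|g\|_{L_2(B_r)}+\|(A-\bar{A})Dv\|_{L_2(B_r)})$. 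Fixing the Gehring exponent $2\mu=\tilde{q}\in(2,q)$ from Lemma \ref{lem0715_1} applied to $(u,p)$, and setting $\nu=\mu/(\mu-1)$, Hölder's inequality together with $|A-\bar{A}|\le 2\delta^{-1}$ and $\dashint_{B_r}|A-\bar{A}|\le\rho$ yields
$$\bigl(|(A-\bar{A})Dv|^2\bigr)^{1/2}_{B_r}\le N\rho^{1/(2\nu)}\bigl(|Dv|^{2\mu}\bigr)^{1/(2\mu)}_{B_r}.$$
Writing $Dv=Du+D\tilde{v}$ and invoking the $L_{2\mu}$-regularity for the $x_1$-only Stokes system on $B_r$ (obtained by cutoff localization from Theorem 2.1 of \cite{DK15S}) controls $\|D\tilde{v}\|_{L_{2\mu}(B_r)}$ by $\|Du\|_{L_{2\mu}(B_r)}+\|g\|_{L_{2\mu}(B_r)}$; the reverse Hölder inequality (Lemma \ref{lem0715_1}) on $(u,p)$ converts these $L_{2\mu}$-norms on $B_r$ into the $L_2$-average of $(Du,p)$ and the $L_{2\mu}$-average of $(f_\alpha,g)$ on $\Omega_{8r}(x_0)$ that appear in \eqref{eq0715_01}. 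For \eqref{eq0715_02}, I apply Lemma \ref{lem0829_3}(i) to $(v,p_2)$ with radii $r$ and $r/2$ and control the $L_2$-norms of $Dv,p_2$ on $B_r$ via $v=u+\tilde{v}$, $p_2=p-p_1$, together with the $(w,p_1)$-estimate just proved.

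For the boundary case, I take the coordinate system from Assumption \ref{assum0711_1}(ii) at $x_0=0$ with radius $r$, set $\sigma=\rho r$, and use the shifted half-ball $\hat{D}:=B_{r/9}(\sigma e_1)\cap\{y_1>\sigma\}\subset\Omega$. Let $\bar{u}$ denote the zero-extension of $u$ to $\bR^d$; define $(v,p_2)$ on $\hat{D}$ as the solution of the analogous frozen Stokes system with $v-\bar{u}\in\mathring{W}_2^1(\hat{D})^d$, and extend $v:=\bar{u}$ to $\Omega_{r/10}(0)\setminus\hat{D}$. The rest of the argument parallels the interior case, now using Lemma \ref{lem0829_3}(ii) on a true half-ball for the $L_\infty$-estimate of $(v,p_2)$.

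The main obstacle is the bookkeeping in the boundary case: the thin strip $\Omega_{r/10}(0)\setminus\hat{D}$, of measure $O(\rho|B_r|)$, may carry nonzero $\bar{u}$ uncompensated by $v$, producing interface terms that contribute an $O(\rho^{1/2})$ factor which must be absorbed into the $\rho^{1/(2\nu)}$ term in \eqref{eq0718_01}; this requires $\nu\ge 1$ and is ensured by choosing $\mu$ close to $1$. A secondary technical point is the $L_{2\mu}$-regularity of the $x_1$-only Stokes system on the ball and on the half-ball used in the Hölder step, which I would derive from Theorem 2.1 of \cite{DK15S} via a standard cutoff and extension argument.
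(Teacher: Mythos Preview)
Your interior decomposition reverses the order used in the paper, and that reversal costs you an extra regularity ingredient. The paper solves the \emph{Dirichlet} problem for $(w,p_1)$ first, with right-hand side $D_\alpha\bigl(f_\alpha+(\bar A^{\alpha\beta}-A^{\alpha\beta})D_\beta u\bigr)$ and divergence $g-(g)_{B_r}$, then sets $(v,p_2)=(u,p)-(w,p_1)$. In this order the perturbation term involves $D_\beta u$, and the needed $L_{2\mu}$ bound on $Du$ comes straight from the Gehring lemma (Lemma~\ref{lem0715_1}). In your order the perturbation involves $D_\beta v$, so you must bootstrap an $L_{2\mu}$ bound on $D\tilde v$ for the auxiliary Dirichlet problem on $B_r$; the route you sketch (``cutoff localization from Theorem~2.1 of \cite{DK15S}'') does not give a Dirichlet $L_{2\mu}$ estimate on a ball, and the natural source for that estimate in this paper (Lemma~\ref{lem0918_1}) is proved \emph{using} Theorem~\ref{thm1}, hence is circular here. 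One can rescue the step by invoking the $L_q$ solvability on Lipschitz domains from \cite{DK15S}, but the paper's order simply avoids the issue.

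The boundary case has a genuine gap. You set $v-\bar u\in\mathring W_2^1(\hat D)$ with $\hat D=B_{r/9}(\sigma e_1)\cap\{x_1>\sigma\}$, $\sigma=\rho r$. On the flat portion $\{x_1=\sigma\}$ of $\partial\hat D$ you then have $v=\bar u=u$, which is generically \emph{nonzero} since $\{x_1=\rho r\}\cap B_{r/9}$ lies inside $\Omega$. Lemma~\ref{lem0829_3}(ii) requires $v=0$ on the flat boundary, so you cannot apply it to obtain \eqref{eq0718_02}. The ``thin strip'' bookkeeping you mention is a separate, minor issue; the missing Dirichlet condition is the real obstruction. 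The paper handles this with a different mechanism: it multiplies $u$ by a cutoff $\chi(x_1)$ vanishing for $x_1\le\rho r$, so that $\hat u=\chi u$ and hence $v=\hat u-\hat w$ vanish on $\{x_1=\rho r\}$. The price is extra terms $\chi' u$ and $D((\chi-1)u)$, controlled via Hardy's inequality (using $u=0$ on $\partial\Omega$ and $|\chi'|\le N(x_1-\phi(x'))^{-1}$) and the smallness of $|\{\rho r<x_1<2\rho r\}|$; this is where the factor $\rho^{1/(2\nu)}$ actually originates in \eqref{eq0718_01}.
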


\begin{proof}

{\em Case (i).}
Without loss of generality, we assume that $x_0 = 0$. By Assumption \ref{assum0711_1}, there is a coordinate system such that
\begin{equation}
							\label{eq14.07}
\dashint_{B_{r}} \Big| A^{\alpha\beta}(x_1,x') - \bar A^{\alpha\beta}(x_1)\Big| \, dx \le \rho,
\end{equation}
where
$$
\bar A^{\alpha\beta}(x_1)=\dashint_{B_{r}'} A^{\alpha\beta}(x_1,x') \, dx'.
$$
Let $\cL_0$ be the elliptic operator with the coefficients $\bar A^{\alpha\beta}$.

Recall that for $B_r$, Assumption \ref{assum0224_1} is satisfied with $K_1$ depending only on $d$. See Remark \ref{rem0229_1}. By Lemma \ref{lem0225_1}, there exists a unique solution $(w,p_1) \in W_2^1(B_r)^d \times L_2(B_r)$ with $(p_1)_{B_r} = 0$ satisfying
$$
\begin{cases}
\cL_0 w + \nabla p_1 = D_\alpha \big(f_\alpha
+(\bar A^{\alpha\beta}-A^{\alpha\beta})D_\beta u)
\quad
&\text{in}\,\,B_{r},
\\
\operatorname{div} w = g - (g)_{B_r}
\quad
&\text{in}\,\,B_r,
\\
w = 0\quad &\text{on}\,\,\partial B_r,
\end{cases}
$$
and
\begin{align*}
&\||Dw| + |p_1|\|_{L_2(B_r)} \le N \|f_\alpha+(\bar A-A)D u)\|_{L_2(B_r)} + N \|g-(g)_{B_r}\|_{L_2(B_r)}\\
&\le N \|f_\alpha\|_{L_2(B_r)}+ N \|\bar A-A\|_{L_{2\nu}(B_r)}\|D u\|_{L_{2\mu}(B_r)} + N \|g\|_{L_2(B_r)},
\end{align*}
where $N=N(\delta,d)$, $1/\mu + 1/\nu = 1$, $2\mu=\tilde q \in (2,\infty)$ is a number from Lemma \ref{lem0715_1} that depends only on $d$, $\delta$, $K_1$, and $q$. We used H\"older's inequality in the last inequality.
This together with \eqref{eq14.07}, the boundedness of $A^{ij}$ by $\delta^{-1}$, and Lemma \ref{lem0715_1} implies
\eqref{eq0715_01}.
Here, we use the fact that $r \le R_1/8 \le R_0/8$ and inequalities such as $(|D\bar{u}|^2)_{B_{8r}}^{1/2} \le (|Du|^2)_{\Omega_{8r}}^{1/2}$, where $\bar{u}$ is the zero extension of $u$ outside $\Omega$.

Now, set $(v,p_2) = (u,p) - (w,p_1)$, which satisfies
$$
\begin{cases}
\cL_0 v + \nabla p_2 = 0
\quad
&\text{in}\,\,B_r,
\\
\operatorname{div} v = (g)_{B_r}
\quad
&\text{in}\,\,B_r.
\end{cases}
$$
Then, by Lemma \ref{lem0829_2} (i) with $r=r/2$ and $R=r$ we have
$$
\||Dv|+|p_2|\|_{L_\infty(B_{r/2})} \le N (|Dv|^2)_{B_r}^{1/2} + N (|p_2|^2)_{B_r}^{1/2},
$$
where $N=N(\delta,d)$.
From this, the fact that $(v,p_2) = (u-w,p-p_1)$, and \eqref{eq0715_01}, we obtain \eqref{eq0715_02}.

{\em Case (ii).} Again, we assume that $x_0 = 0$.
Owing to Assumption \ref{assum0711_1}, we can find an orthogonal transformation to obtain
$$
 \{(x_1, x'):\rho r< x_1\}\cap B_r
 \subset\Omega\cap B_r
 \subset \{(x_1, x'):-\rho r<x_1\}\cap B_r
$$
as well as \eqref{eq14.07}.

Take a smooth function $\chi$ on $\bR$ such that
$$
\chi(x_1)\equiv 0\quad\text{for}\,\,x_1\le \rho r,
\quad \chi(x_1)\equiv 1\quad\text{for}\,\,x_1\ge  2\rho r,\quad
\text{and}
\quad
|\chi'| \le 2(\rho r)^{-1}.
$$
Let $\hat u=\chi u$, which vanishes on $B_r\cap \{x_1\le \rho r\}$. We see that $(\hat u,p)$ satisfies
\begin{equation}
                                    \label{eq17.23b}
\begin{cases}
\cL_0 \hat u+\nabla p = D_\alpha \big(f_\alpha+h_\alpha\big)
\quad
&\text{in}\,\,B_r\cap\{x_1>\rho r\},
\\
\operatorname{div}  \hat u = \chi g+\chi' u_1
\quad
&\text{in}\,\,B_r\cap\{x_1>\rho r\},
\\
\hat u = 0
\quad
&\text{on} \,\, B_r\cap \{x_1=\rho r\},
\end{cases}
\end{equation}
where
$$
h_\alpha= (\bar A^{\alpha\beta}-A^{\alpha\beta})D_\beta u + \bar A^{\alpha\beta}D_\beta((\chi-1) u).
$$
For $\tau \in [0,\infty)$, denote
$$
\widetilde{B}_r^+(\tau,0) = B_r(\tau,0) \cap \{x_1 > \tau\},
$$
where $0 \in \bR^{d-1}$.
Since $\rho\in (0,1/120)$, we have
\begin{equation}
							\label{eq0716_01}
\Omega_{r/20} \subset \Omega_{7r/120}(\rho r,0) \subset \Omega_{7r/60}(\rho r,0) \subset \Omega_{r/8} \subset \Omega_r
\end{equation}
and
\begin{equation}
							\label{eq0722_01}
\Omega_{r/10} \subset \Omega_{7r/60}(\rho r, 0),
\quad
\widetilde{B}^+_{7r/120}(\rho r,0)
\subset \widetilde{B}^+_{7r/60}(\rho r,0) \subset \Omega_{r/8} \cap \{x_1 > \rho r\}.
\end{equation}
By Lemma \ref{lem0225_1}, there is a unique solution
$$
(\hat w,\hat{p}_1)\in W^1_2(\widetilde{B}^+_{7r/60}(\rho r,0))^d \times L_2(\widetilde{B}^+_{7r/60}(\rho r,0))
$$
satisfying $(\hat{p}_1)_{\widetilde{B}^+_{7r/60}(\rho r,0)}=0$ and
\begin{equation}
							\label{eq0714_03}
\begin{cases}
\cL_0 \hat w+\nabla \hat{p}_1 = D_\alpha (f_\alpha+h_\alpha)
\quad
&\text{in}\,\,\widetilde{B}^+_{7r/60}(\rho r,0),
\\
\operatorname{div} \hat w = \chi g+\chi' u_1-\big(\chi g+\chi' u_1\big)_{\widetilde{B}^+_{7r/60}(\rho r,0)}
\quad
&\text{in}\,\,\widetilde{B}^+_{7r/60}(\rho r,0),
\\
\hat w = 0
\quad
&\text{on} \,\, \partial \widetilde{B}^+_{7r/60}(\rho r,0).
\end{cases}
\end{equation}
Moreover, it holds that
\begin{align}
                                \label{eq3.01}
&\|D\hat w\|_{L_2} + \|\hat{p}_1\|_{L_2} \le N\big(\|f_\alpha\|_{L_2}
+\|h_\alpha\|_{L_2}
+\|\chi g+\chi' u\|_{L_2}\big)\nonumber\\
&\le N\big(\|f_\alpha\|_{L_2}
+\|(\bar A-A)Du\|_{L_2}+\|D((\chi-1)u)\|_{L_2}
+\|g\|_{L_2}+\|\chi' u\|_{L_2}\big),
\end{align}
where $\|\cdot\|_{L_2}=\|\cdot\|_{L_2(\widetilde{B}^+_{7r/60}(\rho r,0))}$ and $N = N(d,\delta)$.
Since $\chi-1$ is supported on $\{x_1\le 2\rho r\}$, H\"older's inequality and \eqref{eq14.07} imply that
\begin{equation}
							\label{eq0328_06}
\|(\chi-1)Du\|_{L_2(\widetilde{B}^+_{7r/60}(\rho r,0))} \le N\rho^{\frac 1 {2\nu}}r^{\frac d {2\nu}}
\|Du\|_{L_{2\mu}(\Omega_{r/8})},
\end{equation}
and
\begin{equation}
							\label{eq14.28}
\|(\bar A-A)Du\|_{L_2(\widetilde{B}^+_{7r/60}(\rho r,0))} \le N\rho^{\frac 1 {2\nu}}r^{\frac d {2\nu}}
\|Du\|_{L_{2\mu}(\Omega_{r/8})},
\end{equation}
where $1/\mu + 1/\nu = 1$ and $2\mu=\tilde q \in (2,\infty)$ is a number from Lemma \ref{lem0715_1} depending only on $d$, $K_1$, and $q$.
Using H\"older's inequality again with the same $\mu$ and $\nu$, along with the fact that $\chi'$ is supported on $\{\rho r \le x_1 \le 2\rho r\}$, we have
\begin{align}
							\label{eq0328_07}
\|\chi' u\|_{L_2(\widetilde{B}^+_{7r/60}(\rho r,0))}
&\le N \rho^{\frac{1}{2\nu}} r^{\frac{d}{2\nu}}
\|\chi' u\|_{L_{2\mu}(\widetilde{B}^+_{7r/60}(\rho r,0))}\nonumber\\
&\le N \rho^{\frac{1}{2\nu}} r^{\frac{d}{2\nu}} \|Du\|_{L_{2\mu}(\Omega_{r/8})},
\end{align}
where the last inequality follows from Hardy's inequality, using the boundary condition $u = 0$ on $\partial\Omega$ and the observation that
$$
|\chi'| \le N(x_1 - \phi(x'))^{-1}
$$
for $(x_1,x') \in \Omega_r$. Here, $\phi(x')$ is the largest number such that $(\phi(x'),x') \in \partial \Omega$.
Inserting \eqref{eq0328_06}, \eqref{eq14.28}, and \eqref{eq0328_07} into \eqref{eq3.01} gives
\begin{multline}
							\label{eq21.52h}
\|D\hat{w}\|_{L_2(\tilde{B}^+_{7r/60}(\rho r,0))} + \|\hat{p}_1\|_{L_2(\tilde{B}^+_{7r/60}(\rho r,0))}
\\
\le N \rho^{\frac{1}{2\nu}} r^{\frac{d}{2\nu}} \|Du\|_{L_{2\mu}(\Omega_{r/8})}
+ N \|f_\alpha\|_{L_2(\Omega_{r/8})} + N \|g\|_{L_2(\Omega_{r/8})},
\end{multline}
where $N=N(d,\delta)$.
We extend $\hat w$ to be zero  in $\Omega_{7r/60}(\rho r,0)\cap \{x_1<\rho r\}$ so that $\hat w\in W^1_2(\Omega_{7r/60}(\rho r,0))$, and we set
$$
w=\hat w+(1-\chi)u.
$$
We also set
$$
p_1 =
\begin{cases}
\hat{p}_1 \quad &\text{in} \,\, \tilde{B}^+_{7r/60}(\rho r,0),
\\
p \quad &\text{in} \,\,\Omega_{7r/60}(\rho r,0)\cap \{x_1<\rho r\}.
\end{cases}
$$
By the same reasoning as in \eqref{eq0328_06} and \eqref{eq0328_07}, using \eqref{eq0716_01} we have
$$
\|D\left((1-\chi) u\right)\|_{L_2(\Omega_{7r/60}(\rho r,0))} \le N \rho^{\frac{1}{2\nu}} r^{\frac{d}{2 \nu}} \|Du\|_{L_{2\mu}(\Omega_{r/8})}
$$
and
$$
\|p_1\|_{L_2(\Omega_{7r/60}(\rho r,0))} \le \|\hat{p}_1\|_{L_2(\tilde{B}^+_{7r/60}(\rho r,0))} + N\rho^{\frac{1}{2\nu}} r^{\frac{d}{2\mu}} \|p\|_{L_{2\mu}(\Omega_{r/8})},
$$
where $N=N(d)$.
From these inequalities and \eqref{eq21.52h}, we deduce that
$$
\|Dw\|_{L_2(\Omega_{7r/60}(\rho r,0))} + \|p_1\|_{L_2(\Omega_{7r/60}(\rho r,0))} \le N \|f_\alpha\|_{L_2(\Omega_{r/8})} + N \|g\|_{L_2(\Omega_{r/8})}
$$
$$
+ N \rho^{\frac 1 {2\nu}} r^{\frac{d}{2\nu}} \left(\|Du\|_{L_{2\mu}(\Omega_{r/8})} + \|p\|_{L_{2\mu}(\Omega_{r/8})}\right),
$$
where $N=N(d,\delta)$.
This combined with Lemma \ref{lem0715_1} and the inequality $7r/60 \le 7R_1/60 \le R_0/8$ shows that
\begin{equation}
            \label{eq18.34h}
(|Dw|^2 + |p_1|^2)_{\Omega_{7r/60}(\rho r,0)}^{\frac 1 2}
\le N \rho^{\frac 1 {2\nu}}(|Du|^2 + |p|^2)_{\Omega_r}^{\frac 1  2}+
N(|f_\alpha|^{2\mu}+|g|^{2\mu})_{\Omega_r}^{\frac 1 {2\mu}},
\end{equation}
where $(N,\mu)=(N,\mu)(d,\delta, K_1, q)$, $1/\mu+1/\nu = 1$.
Using this inequality, \eqref{eq0722_01}, and the fact that $|\Omega_{r/10}| \ge N(d) |\Omega_{7r/60}(\rho r,0)|$ if $\rho < 1/120$, we obtain \eqref{eq0718_01}.

Next, we define $v= u- w$ $(=\chi u-\hat w)$ and $p_2=p-p_1$ in $\Omega_{7r/60}(\rho r,0)$.
In particular,
$$
p_2 = 0
\quad
\text{in} \,\,\Omega_{7r/60}(\rho r, 0) \cap \{x_1 < \rho r\}.
$$
From \eqref{eq17.23b} and \eqref{eq0714_03}, we see that $(v,p_2)$ satisfies
\begin{equation*}
\begin{cases}
\cL_0 v+\nabla p_2 = 0
\quad
&\text{in}\,\,\widetilde{B}^+_{7r/60}(\rho r,0),
\\
\operatorname{div} v = \big(\chi g+\chi' u_1\big)_{\widetilde{B}^+_{7r/60}(\rho r,0)}
\quad
&\text{in}\,\,\widetilde{B}^+_{7r/60}(\rho r,0),
\\
v = 0
\quad
&\text{on} \,\, B_{7r/60}(\rho r,0)\cap \{x_1=\rho r\}.
\end{cases}
\end{equation*}
Denote
$$
\cD_1=\Omega_{r/20}\cap \{x_1\le \rho r\},
\,\,
\cD_2=\Omega_{r/20} \cap \{x_1> \rho r\},
\,\,\text{and}\,\,
\cD_3=\widetilde{B}^+_{7r/120}(\rho r,0).
$$
Note that, by \eqref{eq0716_01}, $\cD_2\subset \cD_3$, and $(v,p_2)=(0,0)$ in $\cD_1$.
By applying Lemma \ref{lem0829_2} (ii) with $r= 7r/120$ and $R=7r/60$,
we get
\begin{align*}
&\||Dv| + |p_2|\|_{L_\infty(\Omega_{r/20})} = \||Dv| + |p_2|\|_{L_\infty(\cD_2)}
\le \||Dv| + |p_2|\|_{L_\infty(\cD_3)}\\
&\le N r^{-d/2}\|Dv\|_{L_2(\widetilde{B}^+_{7r/60}(\rho r,0))} + N r^{-d/2} \|p_2\|_{L_2(\widetilde{B}^+_{7r/60}(\rho r,0))}.
\end{align*}
This together with the equality $(u,p) = (w,p_1) + (v,p_2)$ in $\Omega_{7r/60}(\rho r,0)$ and \eqref{eq18.34h} gives
\begin{align*}
\||Dv| + |p_2|\|_{L_\infty(\Omega_{r/20})} &\le N \left(|Du|^2+|p|^2 \right)_{\Omega_{7r/60}(\rho r, 0)}^{\frac 1 2} + N \left(|Dw|^2+|p_1|^2\right)_{\Omega_{7r/60}(\rho r, 0)}^{\frac 1 2}
\\
&\le N(\rho^{\frac 1 {2\nu}}+1) (|Du|^2 + |p|^2)_{\Omega_r}^{\frac 1  2}+
N(|f_\alpha|^{2\mu}+|g|^{2\mu})_{\Omega_r}^{\frac 1 {2\mu}}.
\end{align*}
Hence, the inequality \eqref{eq0718_02} is proved.
\end{proof}

\begin{corollary}
							\label{cor0719_1}
Let $\rho \in (0,1/320)$, $q \in (2,\infty)$, and $\Omega$ be a bounded domain with $\operatorname{diam} \Omega \le K$. Suppose that $(\cL, \Omega)$ satisfies Assumptions \ref{assum1004} and \ref{assum0711_1} ($\rho$) with a constant $K_1$ in the estimate \eqref{eq0711_00}, and $(u,p) \in W_2^1(\Omega)^d \times L_2(\Omega)$ satisfies
\begin{equation*}
\begin{cases}
\cL u + \nabla p = D_\alpha f_\alpha
\quad
&\text{in}\,\,\Omega,
\\
\operatorname{div} u = g
\quad
&\text{in}\,\,\Omega,
\\
u = 0
\quad
&\text{on} \,\, \partial \Omega,
\end{cases}
\end{equation*}
where $f_\alpha, g\in L_q(\Omega)$ and $(g)_\Omega=0$.
Then, for any $x_0 \in \bar{\Omega}$ and $r \in (0,R_1]$, there exist
$$
(w,p_1), (v,p_2) \in W_2^1(\Omega_{r/40}(x_0))^d \times L_2(\Omega_{r/40}(x_0))
$$
such that $(u,p) = (w,p_1) + (v,p_2)$ in $\Omega_{r/40}(x_0)$ and
$$
( |Dw|^2 + |p_1|^2 )_{\Omega_{r/40}(x_0)}^{\frac{1}{2}} \le N \rho^{\frac{1}{2\nu}} ( |Du|^2 + |p|^2 )^{\frac{1}{2}}_{\Omega_r(x_0)} + N (|f_\alpha|^{2\mu} + |g|^{2\mu})_{\Omega_r(x_0)}^{\frac{1}{2\mu}},
$$
\begin{multline*}
\|Dv\|_{L_\infty(\Omega_{r/80}(x_0))} + \|p_2\|_{L_\infty(\Omega_{r/80}(x_0))}
\\
\le N (\rho^{\frac{1}{2\nu}}+1) ( |Du|^2 + |p|^2 )^{\frac{1}{2}}_{\Omega_r(x_0)} + N (|f_\alpha|^{2\mu} + |g|^{2\mu})_{\Omega_r(x_0)}^{\frac{1}{2\mu}},	
\end{multline*}
where $N>0$ and $\mu,\nu>1$ are constants depending only on $d$, $\delta$, $K_1$, and $q$, and $\mu$ satisfies $2\mu<q$ and $1/\mu + 1/\nu = 1$.
\end{corollary}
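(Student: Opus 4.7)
The plan is to derive Corollary \ref{cor0719_1} directly from Proposition \ref{prop0719_1} by a case split on the distance $d_0 := \operatorname{dist}(x_0,\partial\Omega)$, with the cutoff chosen at $r/40$ so that the scale ratios in the conclusion match both parts of the proposition.

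\textbf{Interior case.} If $B_{r/40}(x_0) \subset \Omega$, I would apply Proposition \ref{prop0719_1}(i) at $x_0$ with radius $r/40$, which is admissible since $r/40 \le R_1/8$. This produces a decomposition $(u,p)=(w,p_1)+(v,p_2)$ on $B_{r/40}(x_0)=\Omega_{r/40}(x_0)$, with the $L_\infty$ bound on $(v,p_2)$ holding on $B_{r/80}(x_0)=\Omega_{r/80}(x_0)$, and with right-hand sides averaged over $\Omega_{r/5}(x_0)\subset\Omega_r(x_0)$. Passing from averages over $\Omega_{r/5}(x_0)$ to averages over $\Omega_r(x_0)$ costs at most a dimensional constant because a Reifenberg flat domain (Assumption \ref{assum1004}) satisfies $|\Omega_s(x_0)|\ge c(d)\,s^d$ for $s\in(0,R_0]$.

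\textbf{Boundary case.} Otherwise I would pick $y_0\in\partial\Omega$ with $|x_0-y_0|<r/40$ and apply Proposition \ref{prop0719_1}(ii) at $y_0$ with radius $r'=3r/4\le R_1$. A direct computation using $|x_0-y_0|<r/40$ verifies the three containments
\begin{equation*}
\Omega_{r/40}(x_0)\subset \Omega_{3r/40}(y_0),\quad
\Omega_{r/80}(x_0)\subset \Omega_{3r/80}(y_0),\quad
\Omega_{3r/4}(y_0)\subset \Omega_r(x_0),
\end{equation*}
so the decomposition furnished by Proposition \ref{prop0719_1}(ii) restricts to the required subdomains, with the right-hand sides (averages over $\Omega_{3r/4}(y_0)$) dominated by averages over $\Omega_r(x_0)$ up to a dimensional constant from the same volume comparability.

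There is essentially no analytic content beyond Proposition \ref{prop0719_1}; the proof is pure bookkeeping with the radius fractions. The only thing to check is that the threshold $r/40$ on $d_0$ and the boundary-case radius $r'=3r/4$ are jointly consistent with all three inclusions above (in particular $r'\ge 20(r/80+d_0)$, which is the binding constraint) as well as with $r'\le R_1$. The main (and only) obstacle is to nail down this arithmetic, which dictates the exact ratios $1/40$ and $1/80$ appearing in the statement. The hypothesis $\rho<1/320$ is comfortably stronger than the $\rho<1/120$ needed to invoke Proposition \ref{prop0719_1}(ii), and the exponents $\mu,\nu$ and the dependence of $N$ are inherited unchanged.
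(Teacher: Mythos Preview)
Your proposal is correct and follows essentially the same approach as the paper's proof: the same case split at radius $r/40$, the same application of Proposition~\ref{prop0719_1}(i) with radius $r/40$ in the interior case, and the same application of Proposition~\ref{prop0719_1}(ii) at the nearest boundary point $y_0$ with radius $3r/4$ in the boundary case, together with the same chain of inclusions \eqref{eq0719_03} and volume comparisons \eqref{eq0719_04}. Your explicit identification of the binding constraint $r'\ge 20(r/80+d_0)$ is a nice touch that the paper leaves implicit.
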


\begin{proof}
Without loss of generality, we may assume that $x_0 = 0$.
We consider the following two cases.
$$
B_{r/40} \subset \Omega,
\quad
B_{r/40} \cap \partial \Omega \neq \emptyset.
$$
If $B_{r/40} \subset \Omega$, then by Proposition \ref{prop0719_1} (i) with the fact that $r/40 \le R_1/8$, there exist
$$
(w,p_1), (v,p_2) \in W_2^1(B_{r/40})^d \times L_2(B_{r/40})
$$
such that $(u,p) = (w,p_1) + (v,p_2)$ in $B_{r/40}(x_0)$ and
$$
( |Dw|^2 + |p_1|^2 )_{B_{r/40}(x_0)}^{\frac{1}{2}}
\le N \rho^{\frac{1}{2\nu}} ( |Du|^2 + |p|^2 )^{\frac{1}{2}}_{\Omega_{r/5}(x_0)}+N(|f_\alpha|^{2\mu} + |g|^{2\mu})_{\Omega_{r/5}(x_0)}^{\frac{1}{2\mu}},
$$
\begin{multline*}
\|Dv\|_{L_\infty(B_{r/80})} + \|p_2\|_{L_\infty(B_{r/80})}
\\
\le N (\rho^{\frac 1 {2\nu}}+1)( |Du|^2 + |p|^2 )^{\frac{1}{2}}_{\Omega_{r/5}(x_0)} + N (|f_\alpha|^{2\mu} + |g|^{2\mu})_{\Omega_{r/5}(x_0)}^{\frac{1}{2\mu}},
\end{multline*}
where $N=N(d,\delta)$.
These and an inequality similar to those in \eqref{eq0719_04} below show the desired inequalities above.

If $B_{r/40} \cap \partial \Omega \neq \emptyset$,
we find $y_0 \in \partial \Omega$ such that $\operatorname{dist}(0,\partial\Omega) = |y_0|$. Then
\begin{equation}
							\label{eq0719_03}
\Omega_{r/40}\subset \Omega_{3r/40}(y_0),\quad
\Omega_{r/80} \subset \Omega_{3r/80}(y_0) \subset \Omega_{3r/4}(y_0) \subset \Omega_r,	
\end{equation}
and
\begin{equation}
							\label{eq0719_04}
\frac{|\Omega_{3r/40}(y_0)|}{|\Omega_{r/40}|} \le N(d),
\quad
\frac{|\Omega_r|}{|\Omega_{3r/4}(y_0)|} \le N(d).	
\end{equation}
From Proposition \ref{prop0719_1} (ii) with $3r/4$ in place of $r$ it follows that
there exist
$$
(w,p_1), (v,p_2) \in W_2^1(\Omega_{3r/40}(y_0))^d \times L_2(\Omega_{3r/40}(y_0))
$$
such that $(u,p) = (w,p_1) + (v,p_2)$ in $\Omega_{3r/40}(y_0)$ and
\begin{multline*}
( |Dw|^2 + |p_1|^2 )_{\Omega_{3r/40}(y_0)}^{\frac{1}{2}} \\
\le N \rho^{\frac{1}{2\nu}} ( |Du|^2 + |p|^2 )^{\frac{1}{2}}_{\Omega_{3r/4}(y_0)} + N (|f_\alpha|^{2\mu} + |g|^{2\mu})_{\Omega_{3r/4}(y_0)}^{\frac{1}{2\mu}},
\end{multline*}
\begin{multline*}
\|Dv\|_{L_\infty(\Omega_{3r/80}(y_0))} + \|p_2\|_{L_\infty(\Omega_{3r/80}(y_0))}
\\
\le N (\rho^{\frac{1}{2\nu}}+1) ( |Du|^2 + |p|^2 )^{\frac{1}{2}}_{\Omega_{3r/4}(y_0)} + N (|f_\alpha|^{2\mu} + |g|^{2\mu})_{\Omega_{3r/4}(y_0)}^{\frac{1}{2\mu}},	
\end{multline*}
where $(N,\mu)=(N,\mu)(d,\delta,K_1,q)$, $1/\mu + 1/\nu = 1$.
From the above inequalities, \eqref{eq0719_03}, and \eqref{eq0719_04}, we obtain the desired conclusion.
\end{proof}

The maximal function of $f$ in $\bR^d$ is defined by
\begin{equation*}
\cM f(x) = \sup_{x_0\in \bR^d, B_r(x_0) \ni x} \dashint_{B_r(x_0)} |f(y)| \, dy.
\end{equation*}
Throughout this paper, by $\cM (f)$ we mean $\cM (fI_\Omega)$ if $f$ is defined on $\Omega \subset \bR^d$.
For $q\in (2,\infty)$, denote
\begin{align*}
\cA(s)& = \{ x \in \Omega: \left(\cM(|Du|^2 + |p|^2)(x)\right)^{1/2} > s\},\\
\cB(s)& = \{ x \in \Omega: \rho^{-1/(2\nu)}\left( \cM( |f_\alpha|^{2\mu} + |g|^{2\mu} )(x)\right)^{1/(2\mu)}\\
 &\quad \quad+ \left( \cM( |Du|^2 + |p|^2 )(x) \right)^{1/2} > s\},
\end{align*}
where $\mu,\nu \in (1,\infty)$ are from Corollary \ref{cor0719_1} satisfying $2\mu<q$ and $1/\mu + 1/\nu = 1$.

\begin{lemma}
                                \label{lem4.5}
Let $\rho \in (0,1/960)$, $q \in (2,\infty)$, and $\Omega$ be a bounded domain with $\operatorname{diam} \Omega \le K$. Suppose that $(\cL, \Omega)$ satisfies Assumptions \ref{assum1004} and \ref{assum0711_1} ($\rho$) with a constant $K_1$ in the estimate \eqref{eq0711_00}, and $(u,p) \in W_2^1(\Omega)^d \times L_2(\Omega)$ satisfies
\begin{equation*}
\begin{cases}
\cL u + \nabla p = D_\alpha f_\alpha
\quad
&\text{in}\,\,\Omega,
\\
\operatorname{div} u = g
\quad
&\text{in}\,\,\Omega,
\\
u = 0
\quad
&\text{on} \,\, \partial \Omega,
\end{cases}
\end{equation*}
where $f_\alpha, g\in L_q(\Omega)$ and $(g)_\Omega=0$.
Then, there exists a constant $\kappa = \kappa(d,\delta,K_1,q) > 1$ such that the following holds:
For $x_0 \in \bar{\Omega}$, if
\begin{equation}
							\label{eq0720_08}
|\Omega_{R/240}(x_0) \cap \cA(\kappa s)| > \rho^{\frac{1}{\nu}}|\Omega_{R/240}(x_0)|,
\quad
R \in (0,R_1],
\quad
s > 0,
\end{equation}
then we have
$$
\Omega_{R/240}(x_0) \subset \cB(s).
$$
\end{lemma}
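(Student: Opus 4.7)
The plan is to argue by contrapositive: assume that there exists $y \in \Omega_{R/240}(x_0)$ with $y \notin \cB(s)$, and derive that $|\Omega_{R/240}(x_0) \cap \cA(\kappa s)| \le \rho^{1/\nu} |\Omega_{R/240}(x_0)|$, contradicting \eqref{eq0720_08}. The hypothesis $y \notin \cB(s)$ gives two pieces of information: $\cM(|Du|^2+|p|^2)(y) \le s^2$, and $\cM(|f_\alpha|^{2\mu}+|g|^{2\mu})(y) \le \rho^{\mu/\nu} s^{2\mu}$. Since $y \in \Omega_{R/240}(x_0)$, the ball $B_R(x_0)$ is contained in $B_{2R}(y)$, so together with the measure-lower-bound on $\Omega_R(x_0)$ coming from Assumption \ref{assum1004}, averages over $\Omega_R(x_0)$ are controlled by the maximal function at $y$, giving $(|Du|^2+|p|^2)_{\Omega_R(x_0)} \le N s^2$ and $(|f_\alpha|^{2\mu}+|g|^{2\mu})_{\Omega_R(x_0)} \le N \rho^{\mu/\nu} s^{2\mu}$.

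Next, I invoke Corollary \ref{cor0719_1} at the point $x_0$ with radius $R \le R_1$ to decompose $(u,p) = (w,p_1) + (v,p_2)$ on $\Omega_{R/40}(x_0)$. Combining the two estimates of the corollary with the averages above, I obtain
\begin{equation*}
(|Dw|^2+|p_1|^2)_{\Omega_{R/40}(x_0)}^{1/2} \le N \rho^{1/(2\nu)} s,
\quad
\|Dv\|_{L_\infty(\Omega_{R/80}(x_0))} + \|p_2\|_{L_\infty(\Omega_{R/80}(x_0))} \le N s.
\end{equation*}
The key pointwise estimate is then, for $z \in \Omega_{R/240}(x_0)$,
\begin{equation*}
\cM(|Du|^2+|p|^2)(z) \le N_1 s^2 + N_2\, \cM\!\left((|Dw|^2+|p_1|^2)\chi_{\Omega_{R/40}(x_0)}\right)(z),
\end{equation*}
obtained by splitting the supremum in the definition of $\cM$ at $z$ into balls $B_r(z_0) \ni z$ with $r \le R/240$ and $r > R/240$. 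For small $r$, one checks geometrically that $B_r(z_0) \subset B_{R/80}(x_0)$ so $B_r(z_0) \cap \Omega \subset \Omega_{R/80}(x_0)$; splitting $|Du|^2+|p|^2 \le 2(|Dv|^2+|p_2|^2)+2(|Dw|^2+|p_1|^2)$ and using the $L_\infty$-bound on $v,p_2$ produces the $N_1 s^2$ term together with the truncated maximal function of $(|Dw|^2+|p_1|^2)$. For $r > R/240$, one shows that $B_r(z_0) \subset B_{4r}(y)$, so the average is bounded by $4^d \cM(|Du|^2+|p|^2)(y) \le 4^d s^2$, which is absorbed into $N_1 s^2$.

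Now choose $\kappa$ so that $\kappa^2 \ge 2 N_1$; then $z \in \cA(\kappa s) \cap \Omega_{R/240}(x_0)$ forces $\cM((|Dw|^2+|p_1|^2)\chi_{\Omega_{R/40}(x_0)})(z) > \kappa^2 s^2/(2N_2)$. By the weak $(1,1)$ estimate for the Hardy–Littlewood maximal function and the bound on $(|Dw|^2+|p_1|^2)_{\Omega_{R/40}(x_0)}$,
\begin{equation*}
|\Omega_{R/240}(x_0) \cap \cA(\kappa s)| \le \frac{N}{\kappa^2 s^2}\int_{\Omega_{R/40}(x_0)} (|Dw|^2+|p_1|^2) \le \frac{N \rho^{1/\nu}}{\kappa^2}\, |\Omega_{R/40}(x_0)|.
\end{equation*}
Using Assumption \ref{assum1004} (which gives $|\Omega_{R/40}(x_0)| \le C_d\, |\Omega_{R/240}(x_0)|$), this becomes $C_d N \rho^{1/\nu} \kappa^{-2} |\Omega_{R/240}(x_0)|$. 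Enlarging $\kappa$ once more so that $C_d N/\kappa^2 \le 1$ yields the desired contradiction with \eqref{eq0720_08}. The delicate point — and the step where the choice of the radii $R/40$, $R/80$, $R/240$ is used — is the geometric splitting in the pointwise estimate for $\cM(|Du|^2+|p|^2)(z)$; the rest is a routine application of Corollary \ref{cor0719_1} and the weak-type bound.
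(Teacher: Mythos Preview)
Your proof is correct and follows essentially the same approach as the paper's: a contrapositive argument using Corollary~\ref{cor0719_1} to decompose $(u,p)$, the $L_\infty$-bound on $(v,p_2)$ for small balls, the maximal function bound at the ``good'' point for large balls, and the weak $(1,1)$ inequality to control the measure of $\cA(\kappa s)$. The only cosmetic difference is that you center the decomposition at $x_0$ while the paper centers it at the good point (their $z_0$, your $y$); both choices work, and the geometric verifications are symmetric.
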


\begin{proof}
By scaling and translating the coordinates, we may assume that $s = 1$ and $x_0 = 0$.
We prove by contradiction.
Suppose that there exists $z_0 \in \Omega_{R/240}$ such that $z_0 \notin \cB(1)$. That is, $z_0 \in \Omega_{R/240}$ and
\begin{equation}
							\label{eq0719_01}
\rho^{-1/(2\nu)}\left( \cM( |f_\alpha|^{2\mu} + |g|^{2\mu} )(z_0)\right)^{1/(2\mu)} + \left( \cM( |Du|^2 + |p|^2 )(z_0) \right)^{1/2} \le 1.
\end{equation}
This implies that, for all $r > 0$,
\begin{equation}
							\label{eq0720_01}
\left( |f_\alpha|^{2\mu} I_\Omega + |g|^{2\mu}I_\Omega  \right)_{B_r(z_0)}^{1/\mu} \le \rho^{1/\nu}
\end{equation}
and
\begin{equation}
							\label{eq0720_02}
\left( |Du|^2 I_\Omega + |p|^2 I_\Omega  \right)_{B_r(z_0)} \le 1.
\end{equation}
By Corollary \ref{cor0719_1}, there exist
$$
(w,p_1), (v,p_2) \in W_2^1(\Omega_{R/40}(z_0))^d \times L_2(\Omega_{R/40}(z_0))
$$
such that $(u,p) = (w,p_1) + (v,p_2)$ in $\Omega_{R/40}(z_0)$ and
\begin{multline}
							\label{eq0720_03}
( |Dw|^2 + |p_1|^2 )_{\Omega_{R/40}(z_0)}^{\frac{1}{2}} \\
\le N \rho^{\frac{1}{2\nu}} ( |Du|^2 + |p|^2 )^{\frac{1}{2}}_{\Omega_R(z_0)} + N (|f_\alpha|^{2\mu} + |g|^{2\mu})_{\Omega_R(z_0)}^{\frac{1}{2\mu}},
\end{multline}
\begin{multline}
							\label{eq0720_04}
\|Dv\|_{L_\infty(\Omega_{R/80}(z_0))} + \|p_2\|_{L_\infty(\Omega_{R/80}(z_0))}
\\
\le N (\rho^{\frac{1}{2\nu}}+1) ( |Du|^2 + |p|^2 )^{\frac{1}{2}}_{\Omega_R(z_0)} + N (|f_\alpha|^{2\mu} + |g|^{2\mu})_{\Omega_R(z_0)}^{\frac{1}{2\mu}},	
\end{multline}
where $N=N(d,\delta,K_1,q)$. It follows from \eqref{eq0720_03}, \eqref{eq0720_01}, and \eqref{eq0720_02} that
\begin{equation}
							\label{eq2.19}
( |Dw|^2 + |p_1|^2 )_{\Omega_{R/40}(z_0)}^{\frac{1}{2}} \le N\rho^{\frac 1 {2\nu}}.
\end{equation}

Next we show that for sufficiently large $\kappa>0$, that depends only on $d$, $K_1$, and $q$, we have
\begin{multline}
							\label{eq7020_07}
\{\cM (|Du|^2 + |p|^2) > \kappa^2 \} \cap \Omega_{R/240}
\\
\subset \left\{\cM \left( |Dw|^2 I_{\Omega_{R/80}(z_0)} + |p_1|^2 I_{\Omega_{R/80}(z_0)} \right) > \kappa^2/3 \right\} \cap \Omega_{R/240}.
\end{multline}
To prove this, let $y$ be a point belonging to
\begin{equation}
							\label{eq0719_02}
\left\{\cM \left(|Dw|^2   I_{\Omega_{R/80}(z_0)} + |p_1|^2 I_{\Omega_{R/80}(z_0)}\right) \le \kappa^2/3 \right\} \cap \Omega_{R/240}.
\end{equation}
By the triangle inequality, we have $B_r(y) \cap \Omega \subset \Omega_{r+R/120}(z_0)$. For $r \ge R/240$, by \eqref{eq0720_02},
\begin{align}
							\label{eq0720_06}
&\dashint_{B_r(y)} (|Du|^2 + |p|^2)I_\Omega \le \frac{1}{|B_r(y)|} \int_{B_{r+R/120}(z_0)} (|Du|^2 + |p|^2)I_{\Omega}\nonumber
\\
&\le \frac{|B_{r+R/120}(z_0)|}{|B_r(y)|} = \left(\frac{r+R/120}{r}\right)^d \le 3^d.
\end{align}
For $r < R/240$, by the triangle inequality, we have
$\Omega_r(y) \subset \Omega_{R/80}(z_0)$.
Since \eqref{eq0719_02} means that
$$
\dashint_{B_r(y)} |Dw|^2 I_{\Omega_{R/80}(z_0)}   + |p_1|^2 I_{\Omega_{R/80}(z_0)} \, dz \le \kappa^2/3
$$
for any $r > 0$, we have
\begin{align}
							\label{eq0720_05}
&\dashint_{B_r(y)} \left(|Du|^2 + |p|^2\right)I_\Omega
=  \dashint_{B_r(y)} \left(|Du|^2 + |p|^2\right)I_{\Omega_{r}(y)}\nonumber
\\
&\le 2 \dashint_{B_r(y)} |Dw|^2 I_{\Omega_{R/80}(z_0)} + |p_1|^2 I_{\Omega_{R/80}(z_0)} + (|Dv|^2+|p_2|^2)I_{\Omega_{R/80}(z_0)}\nonumber
\\
&\le 2 \kappa^2/3 + 2\dashint_{B_r(y)} (|Dv|^2+|p_2|^2)I_{\Omega_{R/80}(z_0)}.
\end{align}
By \eqref{eq0720_04}, on $\Omega_r(y) \subset \Omega_{R/80}(z_0)$, we have
$$
|Dv|^2 + |p_2|^2 \le N(\rho^{\frac{1}{\nu}}+1) ( |Du|^2 + |p|^2 )_{\Omega_R(z_0)} + N (|f_\alpha|^{2\mu} + |g|^{2\mu})_{\Omega_R(z_0)}^{\frac{1}{\mu}}.
$$
From this, \eqref{eq0720_05}, and \eqref{eq0719_01}, it follows that
\begin{equation}
							\label{eq0720_09}
\frac{1}{|B_r(y)|}\int_{B_r(x)} \left(|Du|^2 + |p|^2\right)I_{\Omega}
\le 2 \kappa^2/3+ N_1,
\end{equation}
where $N_1=N_1(d, \delta,K_1, q)$.
From \eqref{eq0720_09} and \eqref{eq0720_06}, we arrive at \eqref{eq7020_07} provided that $\kappa^2 \ge \max\{3^d, 3N_1\}$.

Now by \eqref{eq7020_07}, \eqref{eq2.19}, and the Hardy-Littlewood inequality, we see that
\begin{align*}
&|\{ \cM ( |Du|^2+|p|^2 ) > \kappa^2 \} \cap \Omega_{R/240}|
\le \frac{N}{\kappa^2/3} \int_{\bR^d} |Dw|^2 I_{\Omega_{R/80}(z_0)} + |p_1|^2 I_{\Omega_{R/80}(z_0)}\\
&\le \frac{N \rho^{1/\nu}|\Omega_{R/40}(z_0)|}{\kappa^2/3} \le \rho^{1/\nu} |\Omega_{R/240}|,
\end{align*}
provided that $\kappa$ is sufficiently large depending only on $d$, $\delta$, $K_1$, and $q$.
This contracts the assumption
\eqref{eq0720_08}.
The lemma is proved.
\end{proof}

We are now ready to complete the proof of Theorem \ref{thm1}.
\begin{proof}[Proof of Theorem \ref{thm1}]
We may assume that $f \equiv 0$.
Indeed, for $B_R \supseteq \Omega$, we find $w \in W_{q_1}^2(B_R)$ such that $\Delta w = f 1_{\Omega}$ in $B_R$ and $w|_{\partial B_R} = 0$.
Then, we consider
$$
\cL u + \nabla p = D_\alpha\left( D_\alpha w + f_\alpha\right),
$$
for which from the Sobolev embedding theorem and the well-known $L_{q_1}$-estimate for the Laplace equation we have
$$
\|D_\alpha w\|_{L_q(\Omega)} \le \|D_\alpha w\|_{L_q(B_R)} \le N \|w\|_{W_{q_1}^2(B_R)}
\le N\|f\|_{L_{q_1}(\Omega)}.
$$
Also, owing to Lemma \ref{lem0225_1}, we only need to consider the case when $q\neq 2$.

{\em Case 1: $q>2$}.
Note that $\Omega$ is bounded and, owing to Assumption \ref{assum0711_1} (also see Remark \ref{rem0229_2}), satisfies Assumption \ref{assum0224_1} with $K_1 = K_1(d, R_0, K)$.
Thus by Lemma \ref{lem0225_1}, there exists a unique solution
$(u,p) \in  W_2^1(\Omega)^d \times L_2(\Omega)$.
We prove that this $(u,p)$ is indeed in $W_q^1(\Omega)^d \times L_q(\Omega)$ and satisfies \eqref{eq0328_02}.

Let $\kappa = \kappa(d,\delta,K_1,q)$ be the constant in Lemma \ref{lem4.5}.
Then, for any $s>0$, by the Hardy-Littlewood inequality,
\begin{equation}
                \label{eq12.16}
|\cA(\kappa s)|\le N_0(\kappa s)^{-2}\||Du|^2+p^2\|_{L_1(\Omega)}
\le N_0(\kappa s)^{-2} \big(
\|Du\|^2_{L_2(\Omega)}+\|p\|^2_{L_2(\Omega)}\big),
\end{equation}
where $N_0=N_0(d)$.
From \eqref{eq12.16}, Lemma \ref{lem4.5}, and a result from measure theory on the ``crawling of ink spots,'' which
can be found in \cite{MR579490} or \cite[Section 2]{MR563790}, we have the following upper bound of $\cA$. For any
$$
\rho\in (0,1/960)\quad\text{and}\quad \kappa s\ge N_0^{1/2}\rho^{-1/(2\nu)} |B_{R_1/240}|^{-1/2}\big(\|Du\|^2_{L_2(\Omega)}+\|p\|^2_{L_2(\Omega)}\big)^{1/2},
$$
we have
\begin{equation}
                                \label{eq12.24}
|\cA(\kappa s)|\le N(d)\rho^{1/\nu}|\cB(s)|.
\end{equation}
Recall the elementary identity
\begin{equation}
							\label{eq0812_02}
\|f\|_{L_q(\Omega)}^q=q\int_0^\infty|\{x\in \Omega\,:\,|f(x)|>s\}|s^{q-1}\,ds.
\end{equation}
By using \eqref{eq12.16} when
$$
\kappa s\in \left(0,N_0^{1/2}\rho^{-1/(2\nu)}|B_{R_1/240}                                                                                                                                                                                                                                                  |^{-1/2}\big(\|Du\|^2_{L_2(\Omega)}
+\|p\|^2_{L_2(\Omega)}\big)^{1/2}\right)
$$
and \eqref{eq12.24} otherwise,
we get for any sufficiently large $S>0$,
\begin{align}
                    \label{eq1.50}
&\int_0^{\kappa S}|\cA(s)|s^{q-1}\,ds
=\int_0^{S}|\cA(\kappa s)|\kappa^q s^{q-1}\,ds\nonumber\\
&\le N\rho^{(2-q)/(2\nu)}
\big(\|Du\|^q_{L_2(\Omega)}+\|p\|^q_{L_2(\Omega)}\big)+N\rho^{1/\nu}\int_0^S \cB(s)s^{q-1}\,ds\nonumber\\
&\le N_1\rho^{(2-q)/(2\nu)}\big(\|Du\|^q_{L_2(\Omega)}+\|p\|^q_{L_2(\Omega)}
+\|f_\alpha\|_{L_q(\Omega)}^q+\|g\|_{L_q(\Omega)}^q\big)\nonumber\\
&\quad+N_2\rho^{1/\nu}
\int_0^{S}|\cA(s)|s^{q-1}\,ds,
\end{align}
where we used the Hardy-Littlewood maximal function theorem in the last inequality (recall $q>2\mu$), and $N_2$ depends only on $d$, $\delta$,  $K_1$, and $q$, and $N_1$ depends also on $R_1$.
Note that since $\Omega$ is bounded, we have
$$
\int_0^{\kappa S} |\cA (s)| s^{q-1} \, ds < \infty.
$$
Then by replacing $S$ by $\kappa S$ in the last integral in \eqref{eq1.50} and taking $\rho$ sufficiently small depending on $d$, $\delta$,  $K_1$, and $q$, we obtain from \eqref{eq1.50} that
$$
\int_0^{\kappa S}|\cA(s)|s^{q-1}\,ds\le
N\big(\|Du\|^q_{L_2(\Omega)}+\|p\|^q_{L_2(\Omega)}
+\|f_\alpha\|_{L_q(\Omega)}^q+\|g\|_{L_q(\Omega)}^q\big),
$$
where $N$ depends only on $d$, $\delta$, $K_1$, $q$, and $R_1$.
Now, let $S\to \infty$, and use the identity \eqref{eq0812_02} and Lemma \ref{lem0225_1} to obtain
$$
\| \cM ( |Du|^2 + |p|^2 ) \|_{L_{q/2}(\Omega)}^{q/2} \le N \|f_\alpha\|_{L_q(\Omega)}^q + N\|g\|_{L_q(\Omega)}^q,
$$
where $N=N(d,\delta, R_1, K_1,K, q)$.
From this, we finally see that $(u,p)$ is in $ W_q^1(\Omega)^d \times L_q(\Omega)$ and satisfies \eqref{eq0328_02} because by Lebesgue differentiation theorem
$$
|Du|^2 + |p|^2 \le \cM( |Du|^2 + |p|^2 )
$$
for a.e. $x \in \Omega$.

{\em Case 2: $q\in (1,2)$.} First we prove \eqref{eq0328_02} by using a duality argument. Let $q'=q/(q-1)\in (2,\infty)$ and $\rho = \rho(d,\delta, K_1,q')$ from Case 1. Then, for any $\eta=(\eta_{\alpha})$, where $\eta_\alpha\in L_{q'}(\Omega)^d$ for $\alpha=1,\ldots,d$, and any $h\in L_{q'}(\Omega)$, there exists a unique solution $(v,\pi) \in W_{q'}^{1}(\Omega)^d \times L_{q'}(\Omega)$ with $(\pi)_\Omega = 0$ satisfying
\begin{equation}
                                \label{eq9.44}
\begin{cases}
D_\beta(A^{\alpha\beta}_{\operatorname{tr}} D_\alpha v) + \nabla \pi = D_\alpha \eta_\alpha
\quad
&\text{in}\,\,\Omega,
\\
\operatorname{div} v = h-(h)_\Omega
\quad
&\text{in}\,\,\Omega,
\\
v= 0\quad
&\text{on}\,\,\partial\Omega,
\end{cases}
\end{equation}
where $A^{\alpha\beta}_{\operatorname{tr}}$ is the transpose of the matrix $A^{\alpha\beta}$ for each $\alpha,\beta=1,\ldots,d$. We also have 
\begin{equation}
                                \label{eq10.55}
\|Dv\|_{L_{q'}(\Omega)} + \|\pi\|_{L_{q'}(\Omega)} \le N \big(\|\eta_\alpha\|_{L_{q'}(\Omega)}+\|h\|_{L_{q'}(\Omega)}\big),
\end{equation}
where $N = N(d,\delta,R_1, K_1,  K,q')$. Now we test \eqref{eq9.44} by $u$ to obtain
$$
\int_\Omega  \eta_\alpha  \cdot D_\alpha u\,dx=\int_\Omega \big(D_\beta u \cdot A^{\alpha\beta}_{\operatorname{tr}}  D_\alpha v +\pi\, \operatorname{div} u\big)\,dx=
\int_\Omega \big(f_\alpha \cdot D_\alpha v-ph+\pi g\big)\,dx.
$$
From this and \eqref{eq10.55}, we get 
$$
\left|\int_\Omega \big(\eta_\alpha \cdot D_\alpha u+ph\big)\,dx\right|\le N\big(\|\eta_\alpha\|_{L_{q'}(\Omega)}+\|h\|_{L_{q'}(\Omega)}\big)
\big(\|f_\alpha\|_{L_q(\Omega)}
+\|g\|_{L_q(\Omega)}\big).
$$
Since $\eta\in \big(L_{q'}(\Omega)\big)^{d^2}$ and $h\in L_{q'}(\Omega)$ are arbitrary, we obtain \eqref{eq0328_02}.

For the solvability, for $k>0$, we consider the equation with
\begin{equation}
                    \label{eq1.46}
f^k_\alpha:=\max\{-k,\min\{f_\alpha,k\}\},\quad g^k:=\max\{-k,\min\{g,k\}\}
\end{equation}
in place of $f_\alpha$ and $g$. Since $f_\alpha^k,g^k\in L_2(\Omega)$, by Lemma \ref{lem0225_1}, there is a unique solution $(u_k,p_k)\in \mathring W^1_2(\Omega)^d\times L_2(\Omega)$ satisfying $(p_k)_\Omega=0$
and
\begin{equation}
                            \label{eq1.48}
\begin{cases}
\cL u_k + \nabla p_k = D_\alpha f^k_\alpha
\quad
&\text{in}\,\,\Omega,
\\
\operatorname{div} u_k = g^k - (g^k)_\Omega
\quad
&\text{in}\,\,\Omega,
\\
u_k = 0
\quad
&\text{on} \,\, \partial \Omega.
\end{cases}
\end{equation}
Since $\Omega$ is bounded, $(u_k,p_k)\in \mathring W^1_q(\Omega)^d\times L_q(\Omega)$.
By the a priori estimate, we have
\begin{align*}
\|u_k\|_{W^1_q(\Omega)}+\|p_k\|_{L_q(\Omega)}
&\le N\|f^k\|_{L_q(\Omega)}+N\|g^k\|_{L_q(\Omega)}\\
&\le N\|f\|_{L_q(\Omega)}+N\|g\|_{L_q(\Omega)}.
\end{align*}
By the weak compactness, there is a subsequence  $(u_{k_j},p_{k_j})$, $u\in \mathring W^1_q(\Omega)^d$, and $p\in L_q(\Omega)$ satisfying $(p)_\Omega=0$, such that
$$
u_{k_j}\rightharpoonup u\quad\text{in}\,\,W^{1}_q(\Omega),\quad
p_{k_j}\rightharpoonup p\quad\text{in}\,\,L_q(\Omega).
$$
Taking the limit of the equations of $(u_{k_j},p_{k_j})$, it is easily seen that $(u,p)$ satisfies \eqref{eq10.49}.
In particular, $\operatorname{div}u = g$ in $\Omega$ because for any $\psi \in L_{q'}(\Omega)$, $1/q+1/q'=1$,
$$
\int_\Omega \left(g^k -(g^k)_\Omega\right) \psi \, dx = \int_\Omega (\operatorname{div} u_k) \psi \, dx \to
\int_\Omega (\operatorname{div} u) \psi \, dx
$$
and
$$
\int_\Omega \left( g^k - (g^k)_\Omega \right)  \psi \, dx \to \int_\Omega g \psi \, dx.
$$
The uniqueness follows from the a priori estimate \eqref{eq0328_02}. The theorem is proved.
\end{proof}

\section{Weighted case}
							\label{sec06}

We consider $\Omega$ and the operator $\cL$ with general coefficients $A^{\alpha\beta}$ satisfying Assumptions \ref{assum1004} and \ref{assum0711_1}.

\subsection{Mean oscillation estimates} In this subsection, we establish mean oscillation estimates for $D_{x'}u$ and $U$ (see \eqref{eq0829_01} for the definition of $U$) in the $L_q$ setting.

\begin{lemma}
							\label{lem0918_1}
Let $q \in (1,\infty)$, $R > 0$, $c > 1$, and $x_0 \in \bR^d$, and let the coefficients $\bar{A}^{\alpha\beta}(x_1)$ of the operator $\cL_0$ be measurable in $x_1$.
For any $f_\alpha, g \in L_q(B_{cR}(x_0))$, there exists $(u,p) \in W_{q}^1(B_R(x_0)) \times L_q(B_R(x_0))$ satisfying
\begin{equation*}
%							\label{eq0918_01}
\begin{cases}
\cL_0 u + \nabla p = D_\alpha f_\alpha
\quad
&\text{in}\,\,B_R(x_0),
\\
\operatorname{div} u = g
\quad
&\text{in}\,\,B_R(x_0),
\end{cases}
\end{equation*}
and
\begin{equation*}
%							\label{eq0918_02}
\|Du\|_{L_q(B_R(x_0))} + \|p\|_{L_q(B_R(x_0))} \le N \|f_\alpha\|_{L_q(B_{cR}(x_0))} + N \|g\|_{L_q(B_{cR}(x_0))},
\end{equation*}
where $N=N(d,\delta,c,q)$, but is independent of $R$.
\end{lemma}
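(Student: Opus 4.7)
The plan is to reduce to $R = 1$ by scaling, then construct the solution on $B_R(x_0)$ by cutting off the data to a slightly larger ball, adjusting the divergence so that it has zero mean, and applying Theorem \ref{thm1} on that larger ball — where the partial BMO hypothesis on $\bar A^{\alpha\beta}(x_1)$ is automatically satisfied.

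After translating $x_0$ to the origin, the rescaling $\tilde u(y) := u(Ry)$, $\tilde p(y) := R\, p(Ry)$, $\tilde{\bar A}^{\alpha\beta}(y_1) := \bar A^{\alpha\beta}(R y_1)$, $\tilde f_\alpha(y) := R f_\alpha(Ry)$, $\tilde g(y) := R g(Ry)$ preserves the form of the system and the $x_1$-only dependence of the coefficients, while each of the $L_q$ norms in the target estimate acquires the common factor $R^{1-d/q}$. Hence it suffices to prove the lemma for $R = 1$ with a constant depending only on $d$, $\delta$, $c$, and $q$; the original statement then follows by undoing the scaling, producing a constant independent of $R$.

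In the unit-scale case, I would fix a cutoff $\eta \in C_0^\infty(B_{(c+1)/2})$ with $\eta \equiv 1$ on $B_1$ and $0 \le \eta \le 1$, set $\hat f_\alpha := \eta f_\alpha$ and $\hat g := \eta g$ (extended by zero to $B_c$), put $m := (\hat g)_{B_c}$, and introduce the linear vector field $w(x) := m\,(x_1, 0, \ldots, 0)^{\operatorname{tr}}$, so that $\operatorname{div} w = m$ and $|\bar A^{\alpha\beta} D_\beta w| \le N|m|$ pointwise, with $|m| \le N \|g\|_{L_q(B_c)}$ by H\"older. I would then apply Theorem \ref{thm1} with $\Omega = B_c$, $f \equiv 0$, and $q_1 = q$ to the modified system
\[
\cL_0 u_1 + \nabla p_1 = D_\alpha\bigl(\hat f_\alpha - m \bar A^{\alpha 1}(x_1)(1,0,\ldots,0)^{\operatorname{tr}}\bigr) \quad \text{in } B_c, \qquad \operatorname{div} u_1 = \hat g - m \quad \text{in } B_c,
\]
with $u_1|_{\partial B_c} = 0$. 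The smooth ball $B_c$ satisfies Assumption \ref{assum1004} with some $R_0 = R_0(c)$, and Assumption \ref{assum0711_1}($\rho$) for any prescribed $\rho$ once $R_1$ is taken small enough: part (i) is vacuous since the coefficients depend only on $x_1$, and part (ii) holds because a $C^\infty$ boundary is arbitrarily flat at sufficiently small scales. This yields $(u_1, p_1) \in W_q^1(B_c)^d \times L_q(B_c)$ with $(p_1)_{B_c} = 0$ obeying $\|Du_1\|_{L_q(B_c)} + \|p_1\|_{L_q(B_c)} \le N\bigl(\|f_\alpha\|_{L_q(B_c)} + \|g\|_{L_q(B_c)}\bigr)$.

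Setting $u := u_1 + w$ and $p := p_1$, a direct check of the weak formulation shows that on $B_1$ (where $\eta \equiv 1$, so $\hat f_\alpha = f_\alpha$ and $\hat g = g$) the pair $(u, p)$ solves the original system; combining the previous bound with the trivial estimate $\|Dw\|_{L_q(B_1)} \le N|m|$ gives the desired inequality on $B_1$, and reversing the rescaling completes the proof. The one step requiring any thought is confirming that Theorem \ref{thm1} is available on $B_c$ with coefficients depending only on one variable, but both of its geometric assumptions hold almost by inspection in this setting, so no real obstacle arises; the remainder is standard cutoff, zero-mean correction, and scaling.
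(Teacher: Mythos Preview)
Your overall strategy --- scale to $R=1$, then invoke Theorem \ref{thm1} on a fixed smooth ball --- matches the paper's, and your linear corrector $w$ cleanly handles the zero-mean constraint on the divergence (a point the paper's own argument passes over).

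The gap is in your verification of Assumption \ref{assum0711_1} (ii) on $B_c$. Part (ii) demands, for each boundary point $y_0$ and each small $r$, a \emph{single} orthogonal coordinate system in which \emph{both} the boundary is $\rho r$-flat \emph{and} the oscillation \eqref{eq0307_01} is at most $\rho$. At a point $y_0 \in \partial B_c$ whose inward normal is far from $\pm e_1$ --- take $y_0 = (0,c,0,\ldots,0)$ --- any rotation that flattens $\partial B_c$ sends the original $x_1$-axis into the new $x'$-hyperplane. In those rotated coordinates $\bar A^{\alpha\beta}$ becomes a merely measurable function of an $x'$-variable, and the quantity in \eqref{eq0307_01} can be of order $1$ no matter how small $R_1$ is chosen. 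So part (ii) does \emph{not} follow from ``a $C^\infty$ boundary is arbitrarily flat at small scales''; that addresses only one of the two simultaneous requirements, and the other fails.

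This is exactly why the paper (see the proof of Lemma \ref{lem0918_2}, to which the proof of Lemma \ref{lem0918_1} refers) replaces the coefficients by
\[
A_R^{\alpha\beta}(x) = \eta(x)\,\bar A^{\alpha\beta}(Rx_1) + (1-\eta(x))\,\delta_{\alpha\beta} I_d,
\]
with the same cutoff $\eta$ vanishing outside $B_{(c+1)/2}$. Near $\partial B_c$ these coefficients are constant, so \eqref{eq0307_01} is trivially zero in any rotated frame and one is free to choose the frame that flattens the boundary; in the interior the only $x'$-dependence comes from the smooth $\eta$, giving oscillation $O(R_1)$ uniformly in $R$. Your argument is repaired by making this same modification to the coefficients before applying Theorem \ref{thm1}; on $B_1$ one has $\eta \equiv 1$, so the solution you produce still satisfies the original system there.
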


\begin{proof}
See the proof of Lemma \ref{lem0918_2} below.
\end{proof}

\begin{lemma}
							\label{lem0918_2}
Let $q \in (1,\infty)$, $R > 0$, $c > 1$, and $x_0 \in \bR^d$, and let the coefficients $\bar{A}^{\alpha\beta}(x_1)$ of the operator $\cL_0$ be measurable in $x_1$.
For any $f_\alpha, g \in L_q(B_{cR}^+(x_0))$, there exists $(u,p) \in W_{q}^1(B_R^+(x_0)) \times L_q(B_R^+(x_0))$ satisfying
\begin{equation}
							\label{eq0918_03}
\begin{cases}
\cL_0 u + \nabla p = D_\alpha f_\alpha
\quad
&\text{in}\,\,B_R^+(x_0),
\\
\operatorname{div} u = g
\quad
&\text{in}\,\,B_R^+(x_0),
\\
u = 0 \quad
&\text{on} \,\, B_R(x_0) \cap \partial \bR^d_+,
\end{cases}
\end{equation}
and
\begin{equation}
							\label{eq0918_04}
\|Du\|_{L_q(B_R^+(x_0))} + \|p\|_{L_q(B_R^+(x_0))} \le N \|f_\alpha\|_{L_q(B_{cR}^+(x_0))} + N \|g\|_{L_q(B_{cR}^+(x_0))},
\end{equation}
where $N=N(d,\delta,c,q)$, but is independent of $R$.
\end{lemma}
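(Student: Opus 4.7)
The plan is to reduce this local half-ball problem to its global counterpart on the whole half-space $\bR^d_+$, for which solvability with $x_1$-measurable coefficients is essentially available from \cite{DK15S}.

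First I would extend $f_\alpha$ and $g$ by zero outside $B_{cR}^+(x_0)$ to obtain $\tilde f_\alpha, \tilde g \in L_q(\bR^d_+)$ with $\|\tilde f_\alpha\|_{L_q(\bR^d_+)} = \|f_\alpha\|_{L_q(B_{cR}^+(x_0))}$ and likewise for $\tilde g$. No smooth cutoff is required, since the equation only has to hold on the smaller set $B_R^+(x_0)$, on which the zero extensions agree with the originals.

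Next I would invoke solvability of the Stokes system on the half-space to produce $(v,\pi) \in W_q^1(\bR^d_+)^d \times L_q(\bR^d_+)$ satisfying $\cL_0 v + \nabla \pi = D_\alpha \tilde f_\alpha$, $\operatorname{div} v = \tilde g$ in $\bR^d_+$, $v = 0$ on $\partial \bR^d_+$, together with the a priori bound
\[
\|Dv\|_{L_q(\bR^d_+)} + \|\pi\|_{L_q(\bR^d_+)} \le N\bigl(\|\tilde f_\alpha\|_{L_q(\bR^d_+)} + \|\tilde g\|_{L_q(\bR^d_+)}\bigr),
\]
with $N = N(d,\delta,q)$. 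Setting $u := v|_{B_R^+(x_0)}$ and $p := \pi|_{B_R^+(x_0)}$ will then yield \eqref{eq0918_03} on $B_R^+(x_0)$ (since the extensions coincide with the originals there), and \eqref{eq0918_04} will follow from monotonicity of the $L_q$-norm; in particular the resulting constant is independent of both $R$ and $c$.

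The main obstacle is supplying the half-space solvability invoked in the middle step for every $q \in (1,\infty)$. For $q \in [2,\infty)$, the a priori $L_q$-estimate on $\bR^d_+$ with $x_1$-measurable coefficients was established in \cite{DK15S}; existence would then follow in the standard way by approximating $(\tilde f_\alpha, \tilde g)$ by smooth compactly supported data, solving classically, and passing to the limit using the a priori bound. The more delicate case is $q \in (1,2)$, where I would run a duality argument against the transposed operator $\cL_0^{*} w = D_\beta\bigl((A^{\alpha\beta})^{\operatorname{tr}} D_\alpha w\bigr)$, whose coefficients remain $x_1$-measurable so that the $q'>2$ estimate applies and thereby yields the $q$-estimate. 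Existence for $q \in (1,2)$ would then be obtained by truncating $(\tilde f_\alpha, \tilde g)$ as in \eqref{eq1.46}, solving the truncated problem in $L_2$ (via the divergence-equation solvability together with a Galerkin or Lax--Milgram argument on $\bR^d_+$), and extracting a weak limit in $W_q^1 \times L_q$, exactly as in Case 2 of the proof of Theorem \ref{thm1}.
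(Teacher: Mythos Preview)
Your overall strategy—extend the data by zero to $\bR^d_+$, solve globally, and restrict—is sound and genuinely different from the paper's proof. The paper does \emph{not} invoke half-space solvability. Instead, it modifies the coefficients outside a fixed ball so that they agree with the Laplacian there, chooses a fixed smooth bounded domain $\Omega$ with $B_{(c+1)/2}^+\subset\Omega\subset B_c^+$ whose boundary contains the flat portion $B_{(c+1)/2}\cap\partial\bR^d_+$, applies the already-established Theorem~\ref{thm1} on $\Omega$ (the modified coefficients satisfy Assumption~\ref{assum0711_1} with arbitrarily small $\rho$ for small $R_1$, uniformly in $R$), and then rescales back to $B_R^+$. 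This is completely self-contained within the paper and yields $N=N(d,\delta,c,q)$. Your route, by contrast, would give an $N$ independent of $c$, which is a small bonus.

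There is, however, a gap in your existence step on $\bR^d_+$. You propose, for $q<2$, to truncate the data, solve in $L_2$, and pass to a weak limit ``exactly as in Case~2 of the proof of Theorem~\ref{thm1}.'' That argument relies on the boundedness of $\Omega$: once $(u_k,p_k)\in W_2^1(\Omega)\times L_2(\Omega)$, the inclusion $L_2(\Omega)\subset L_q(\Omega)$ places it in the $L_q$-scale so the a~priori $L_q$-estimate applies. On $\bR^d_+$ this inclusion fails, and an $L_2$ solution need not lie in $W_q^1(\bR^d_+)\times L_q(\bR^d_+)$, so you cannot invoke the $L_q$ a~priori bound to control the sequence. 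The same issue arises for $q>2$: ``solving classically'' for smooth compactly supported data still only yields an $L_2$ solution (the coefficients are merely measurable in $x_1$), and you must argue separately that it lies in $L_q$. A clean repair is the method of continuity from the Laplacian Stokes system (for which half-space solvability in all $L_q$ is classical), using the a~priori estimates you have assembled; but this is more than ``the standard way,'' and the paper's bounded-domain detour avoids the difficulty entirely.
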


\begin{proof}
Without loss of generality, we assume that $x_0 = 0$.
Set $\eta$ to be an infinitely differentiable function defined on $\bR^d$ such that
$$
\eta(x) = 1 \quad \text{on} \,\, B_1,
\quad
\eta(x) = 0 \quad \text{on} \,\, \bR^d \setminus B_{(c+1)/2}.
$$
Then, for $R>0$, set
$$
\cL_R u = D_\alpha (A^{\alpha\beta}_R D_\beta u ),
$$
where
$$
A^{\alpha\beta}_R(x) = \eta \bar{A}^{\alpha\beta}(R x_1) + (1-\eta) \Delta.
$$

Now, we fix a domain $\Omega$ with a smooth boundary such that
$$
B_{(c+1)/2}^+ \subset \Omega \subset B_c^+,
\quad
B_{(c+1)/2} \cap \partial \bR^d_+ \subset \partial \Omega.
$$
For this $\Omega$, find the constant $K_1 = K_1(d,c)$ in Assumption \ref{assum0224_1}.
Fix $\rho = \rho(d,\delta,c,q)$ in Theorem \ref{thm1} (also see Remark \ref{rem0918_1}).
We see that $A^{\alpha\beta}_R$ satisfy Assumption \ref{assum0711_1} ($\rho$) for $\Omega$ uniformly in $R$ if the $R_1$ in Assumption \ref{assum0711_1} is sufficiently small, and $R_0$ and $R_1$ in Theorem \ref{thm1} depend only on $c$.
By setting
$$
\hat{f}_\alpha(x) = R^{-1}f_\alpha(Rx),
\quad
\hat{g}(x) = R^{-1}g(Rx),
$$
and using Theorem \ref{thm1}, we find $(\hat{u}, \hat{p}) \in W_q^1(\Omega)^d \times L_q(\Omega)$ satisfying $(\hat{p})_{\Omega} = 0$,
$$
\begin{cases}
\cL_R \hat{u} + \nabla \hat{p} = D_\alpha \hat{f}_\alpha
\quad
&\text{in}\,\,\Omega,
\\
\operatorname{div} \hat{u} = \hat{g}
\quad
&\text{in}\,\,\Omega,
\\
\hat{u} = 0 \quad & \text{on} \,\, \partial \Omega,
\end{cases}
$$
and
$$
\|D\hat{u}\|_{L_q(\Omega)} + \|\hat{p}\|_{L_q(\Omega)} \le N \|\hat{f}_\alpha\|_{L_q(\Omega)} + N\|\hat{g}\|_{L_q(\Omega)},
$$
$$
\le N \|\hat{f}_\alpha\|_{L_q(B_c^+)} + N\|\hat{g}\|_{L_q(B_c^+)},
$$
where $N=N(d,\delta,c,q)$.
Then, we see that $(u,p) \in W_q^1(B_R^+)^d \times L_q(B_R^+)$, where
$$
u(x) = R^2 \, \hat{u}(x/R)
\quad\text{and}\quad
p(x) = R \, \hat{p}(x/R)
$$
satisfy \eqref{eq0918_03} and \eqref{eq0918_04}.
\end{proof}

\begin{lemma}
							\label{lem0311_1}
Let $q \in (1,\infty)$, $\mu, \nu \in (1,\infty)$, $1/\mu + 1/\nu = 1$, and $\kappa \ge 4$.
Then under Assumptions \ref{assum1004} and \ref{assum0711_1} ($\rho$),
for any $r \in (0,R_1/\kappa]$, $x_0 \in \bR^d$, and
$$
(u,p) \in W_{q\mu}^1(B_{\kappa r}(x_0))^d \times L_q(B_{\kappa r}(x_0))
$$
satisfying
$$
\begin{cases}
\cL u + \nabla p = D_\alpha f_\alpha
\quad
&\text{in}\,\,B_{\kappa r}(x_0),
\\
\operatorname{div} u = g
\quad
&\text{in}\,\,B_{\kappa r}(x_0),
\end{cases}
$$
where $f_\alpha, g \in L_q(B_{\kappa r}(x_0))$, there exists a $(d^2+1)$-dimensional vector-valued function $\cU$ on $B_{\kappa r}(x_0)$ such that, on $B_{\kappa r}(x_0)$,
\begin{equation}
							\label{eq10.54}
N^{-1}(|Du|+|p|) \le |\cU| \le N (|Du|+|p|)
\end{equation}
and
\begin{multline*}
\left( |\cU - (\cU)_{B_r(x_0)}|\right)_{B_r(x_0)} \le N \kappa^{-\frac{1}{2}} \left( |Du|^q \right)_{B_{\kappa r}(x_0)}^{\frac{1}{q}}
\\
+ N \kappa^{\frac d q} \rho^{\frac{1}{q\nu}} \left( |Du|^{q\mu} \right)_{B_{\kappa r}(x_0)}^{\frac{1}{q\mu}} + N \kappa^{\frac d q} \left(|f_\alpha|^q\right)_{B_{\kappa r}(x_0)}^{\frac{1}{q}}+N \kappa^{\frac d q} \left(|g|^q\right)_{B_{\kappa r}(x_0)}^{\frac{1}{q}},
\end{multline*}
where $N=N(d,\delta,\mu, q)$.
\end{lemma}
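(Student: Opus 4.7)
The plan is to freeze the coefficients in the tangential directions on the ball $B_{\kappa r}(x_0)$, split $(u,p)$ into a perturbation part controlled by an $L_q$ solvability estimate and a homogeneous part controlled by the H\"older regularity of $D_{x'}$ and $U$, and then package the combination into the vector-valued function $\cU$.

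First I would choose the coordinate system provided by Assumption \ref{assum0711_1}(i) with center $x_0$ and radius $\kappa r$, set
$$
\bar A^{\alpha\beta}(x_1) := \dashint_{B'_{\kappa r}(x'_0)} A^{\alpha\beta}(x_1, y')\, dy',
$$
and let $\cL_0$ denote the operator with coefficients $\bar A^{\alpha\beta}$. Rewriting the system as $\cL_0 u + \nabla p = D_\alpha \big( f_\alpha + (\bar A^{\alpha\beta} - A^{\alpha\beta}) D_\beta u \big)$ and $\operatorname{div} u = g$ on $B_{\kappa r}(x_0)$, Lemma \ref{lem0918_1} with $R = \kappa r/2$, $c=2$ produces a pair $(w,p_1)\in W^1_q(B_{\kappa r/2})^d\times L_q(B_{\kappa r/2})$ solving the same system on $B_{\kappa r/2}(x_0)$. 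Combining the $L_q$ bound with H\"older's inequality applied to $(\bar A-A)Du$ at exponents $q\nu,q\mu$, and using Assumption \ref{assum0711_1}(i) together with $|\bar A - A| \le 2\delta^{-1}$ (so $(|\bar A - A|^{q\nu})_{B_{\kappa r}}^{1/(q\nu)} \le N \rho^{1/(q\nu)}$), gives
$$
(|Dw|^q + |p_1|^q)_{B_{\kappa r/2}}^{1/q} \le N \rho^{\tfrac{1}{q\nu}} (|Du|^{q\mu})_{B_{\kappa r}}^{\tfrac{1}{q\mu}} + N \big(|f_\alpha|^q+|g|^q\big)_{B_{\kappa r}}^{1/q}.
$$
Setting $(v,p_2) := (u-w, p-p_1)$, this pair satisfies $\cL_0 v + \nabla p_2 = 0$ and $\operatorname{div} v = 0$ on $B_{\kappa r/2}$. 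I would then define
$$
\cU := (D_{x'} u,\ \bar U,\ g) \in \bR^{d^2+1},
$$
where $\bar U$ is the vector from \eqref{eq0829_01} built with $\bar A^{\alpha\beta}$ in place of $A^{\alpha\beta}$. The upper bound in \eqref{eq10.54} is immediate from $|\bar A|\le\delta^{-1}$ and $|g|=|\operatorname{div} u|\le N|Du|$; for the lower bound, ellipticity of $\bar A$ makes $[\bar A^{11}_{ij}]_{i,j=2}^d$ invertible, so one recovers $D_1 u_j$ ($j\ge 2$) from $\bar U_2,\ldots,\bar U_d$ and $D_{x'} u$, then $D_1 u_1$ from $g$ and $D_{x'} u$, and finally $p$ from $\bar U_1$ and $Du$.

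To bound $(|\cU - (\cU)_{B_r}|)_{B_r}$, I would split the contribution into $v$-parts, $w$-parts, and the $g$-part. For the smooth pair, Lemma \ref{lem0829_3}(i) with $r'=r$ and $R=\kappa r/2$ (and $\kappa r/2 - r \ge \kappa r/4$ since $\kappa \ge 4$) yields
$$
[D_{x'} v]_{C^{1/2}(B_r)} + [\bar U^v]_{C^{1/2}(B_r)} \le N(\kappa r)^{-d-1/2} \|Dv\|_{L_1(B_{\kappa r/2})}.
$$
Combining $(|\cdot - (\cdot)_{B_r}|)_{B_r} \le (2r)^{1/2}[\cdot]_{C^{1/2}(B_r)}$ with $\|Dv\|_{L_1(B_{\kappa r/2})} \le N(\kappa r)^d (|Dv|^q)_{B_{\kappa r/2}}^{1/q}$ collapses this into $N\kappa^{-1/2}(|Dv|^q)_{B_{\kappa r/2}}^{1/q}$; writing $Dv=Du-Dw$ and invoking the $(w,p_1)$-bound produces the $N\kappa^{-1/2}(|Du|^q)_{B_{\kappa r}}^{1/q}$ term of the claim (the remaining $\rho^{1/(q\nu)}$ contribution is absorbed by the $\kappa^{d/q}$ coefficient). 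For the $w$-parts bound trivially $(|D_{x'} w - (D_{x'} w)_{B_r}|)_{B_r} \le 2(|Dw|)_{B_r} \le N\kappa^{d/q}(|Dw|^q)_{B_{\kappa r/2}}^{1/q}$, and analogously for $\bar U^w$ via $|\bar U^w|\le N(|Dw|+|p_1|)$; finally $(|g - (g)_{B_r}|)_{B_r} \le N\kappa^{d/q}(|g|^q)_{B_{\kappa r}}^{1/q}$. Re-inserting the $(w,p_1)$ bound in the $w$-parts produces the $\kappa^{d/q}$-weighted terms in the target inequality, and summing yields the claim.

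The main obstacle is the bookkeeping of two competing scaling factors: the $\kappa^{-1/2}$ gain from the $C^{1/2}$-seminorm scaling of the homogeneous part $(v,p_2)$ (which lets $\kappa$ absorb a local $(|Du|^q)^{1/q}$ term in later applications), versus the $\kappa^{d/q}$ loss incurred whenever we pass from integral averages on $B_r$ back to $B_{\kappa r}$. These factors must be kept separate throughout so that the $\kappa^{-1/2}$ appears only against the $Du$ term, while the $\kappa^{d/q}$ sits in front of the $\rho^{1/(q\nu)}$, $f_\alpha$, and $g$ terms, which is exactly the form needed for the Fefferman--Stein-based weighted estimates that follow.
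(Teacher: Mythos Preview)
Your proposal is correct and follows essentially the same approach as the paper: freeze coefficients via Assumption \ref{assum0711_1}, use Lemma \ref{lem0918_1} to peel off the perturbation part $(w,p_1)$, apply the $C^{1/2}$ estimate of Lemma \ref{lem0829_3} to the homogeneous remainder $(v,p_2)$, and define $\cU$ from $D_{x'}u$, $\operatorname{div} u$, and the $U$-vector built with $\bar A$. The only cosmetic differences are that the paper takes $\operatorname{div} w = g - (g)_{B_{\kappa r/2}}$ (so $\operatorname{div} v$ equals the constant $(g)_{B_{\kappa r/2}}$ rather than $0$), and orders the components of $\cU$ as $(D_{x'}u,\operatorname{div} u,U)$; neither change affects the argument.
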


\begin{proof}
We assume that $x_0 = 0$.
By Assumption \ref{assum0711_1}, there is a coordinate system such that
\begin{equation}
							\label{eq0918_05}
\dashint_{B_{\kappa r}} |A^{\alpha\beta}(x_1,x') - \bar{A}^{\alpha\beta}(x_1) | \, dx \le \rho,
\end{equation}
where
\begin{equation*}
				%			\label{eq0918_07}
\bar{A}^{\alpha\beta}(x_1) = \dashint_{B'_{\kappa r}} A^{\alpha\beta}(x_1,x') \, dx'.
\end{equation*}
Let $\cL_0$ be the elliptic operator with the coefficients $\bar{A}^{\alpha\beta}$.
By Lemma \ref{lem0918_1}, there exists $(w,p_1) \in W_q^1(B_{\kappa r/2})^d \times L_q(B_{\kappa r/2})$ such that
$$
\begin{cases}
\cL_0 w + \nabla p_1 =D_\alpha \bar{f}_\alpha
\quad
&\text{in}\,\,B_{\kappa r/2},
\\
\operatorname{div} w = g - (g)_{B_{\kappa r/2}}
\quad
&\text{in}\,\,B_{\kappa r/2},
\end{cases}
$$
where $\bar{f}_\alpha = f_\alpha + (\bar{A}^{\alpha\beta} - A^{\alpha\beta}) D_\beta u$
and
\begin{equation}
							\label{eq0225_02}
\|Dw\|_{L_q(B_{\kappa r/2})} + \|p_1\|_{L_q(B_{\kappa r/2})} \le N \|\bar{f}_\alpha\|_{L_q(B_{\kappa r})} + N \|g\|_{L_q(B_{\kappa r})}.
\end{equation}
This estimate implies
\begin{equation}
							\label{eq0302_04}
\|Dw\|_{L_q(B_r)} + \|p_1\|_{L_q(B_r)} \le N \left( \|\bar{f}_\alpha\|_{L_q(B_{\kappa r})} + \|g\|_{L_q(B_{\kappa r})}\right),
\end{equation}
where $N=N(d,\delta,q)$.
Note that by the boundedness of $A^{\alpha\beta}$, H\"older's inequality, and \eqref{eq0918_05}, we have
\begin{equation}
							\label{eq0918_06}
\|\bar{f}_\alpha\|_{L_q(B_{\kappa r})} \le \|f_\alpha\|_{L_q(B_{\kappa r})} + N \rho^{1/(q\nu)} (\kappa r)^{d/ (q\nu)} \|Du\|_{L_{q\mu}(B_{\kappa r})},
\end{equation}
where $N=N(\delta,\mu,q)$.

Set $(v,p_2) = (u,p) - (w,p_1)$, which satisfies
$$
\begin{cases}
\cL_0 v + \nabla p_2 = 0
\quad
&\text{in}\,\,B_{\kappa r/2},
\\
\operatorname{div} v = (g)_{B_{\kappa r/2}}
\quad
&\text{in}\,\,B_{\kappa r/2}.
\end{cases}
$$
Then, by Lemma \ref{lem0829_3},
\begin{align}
&\left( |D_{x'} v - (D_{x'} v)_{B_r}| \right)_{B_r} + \left( |V - (V)_{B_r}| \right)_{B_r}\nonumber
\\
							\label{eq10.17}
&\le (2r)^{1/2} \left[D_{x'}v\right]_{C^{1/2}(B_r)} + (2r)^{1/2} \left[V\right]_{C^{1/2}(B_r)}
\le N \kappa^{-1/2}\left( |Dv|^q \right)^{1/q}_{B_{\kappa r/2}},
\end{align}
where
$$
V_1 = \sum_{j=1}^d \sum_{\beta=1}^d \bar{A}_{1j}^{1\beta} D_\beta v_j + p_2,
\quad
V_i = \sum_{j=1}^d\sum_{\beta = 1}^d \bar{A}_{ij}^{1\beta}D_\beta v_j,
\quad
i = 2, \ldots, d,
$$
and $N=N(d,\delta)$.
Set
\begin{equation}
                                            \label{eq10.40}
\cU=(D_{x'}u,\operatorname{div} u, U_1,\ldots,U_d),
\end{equation}
where
\begin{equation*}
%							\label{}
U_1 = \sum_{j=1}^d \sum_{\beta=1}^d \bar{A}_{1j}^{1\beta} D_\beta u_j + p,\quad
U_i = \sum_{j=1}^d\sum_{\beta = 1}^d \bar{A}_{ij}^{1\beta}D_\beta u_j,
\quad
i = 2, \ldots, d.
\end{equation*}
Then, it follows from the triangle inequality, \eqref{eq10.17}, and H\"older's inequality that
\begin{align*}
&\left( |\cU - (\cU)_{B_r}| \right)_{B_r} \le N \left( |D_{x'} v - (D_{x'} v)_{B_r}| \right)_{B_r}
\\
& + N \left( |V - (V)_{B_r}| \right)_{B_r} + \left( |g| \right)_{B_r} + N \left( |D w| + |p_1|\right)_{B_r}\\
&\le N \kappa^{-1/2} \left( |Dv|^q \right)_{B_{\kappa r/2}}^{1/q}
+N \kappa^{d/q} \left( |g|^q \right)_{B_{\kappa r/2}}^{1/q}
+ N \left( |D w| + |p_1| \right)_{B_r},
\end{align*}
where $N=N(d,\delta)$.
Together with the estimates \eqref{eq0225_02}, \eqref{eq0302_04}, \eqref{eq0918_06}, and the fact that $u = v+w$, this shows that
\begin{align*}
&\left( |\cU - (\cU)_{B_r}| \right)_{B_r} \le N \kappa^{-1/2} \left( |Du|^q \right)_{B_{\kappa r}}^{1/q}
\\
& + N \kappa^{d/q} \rho^{1/(q\nu)} \left( |Du|^{q\mu} \right)_{B_{\kappa r}}^{1/(q\mu)} + N \kappa^{d/q} \left(|f_\alpha|^q + |g|^q \right)_{B_{\kappa r}}^{1/q}.
\end{align*}
Finally, it is easy to check \eqref{eq10.54} using the definition of $U$.
The lemma is proved.
\end{proof}

Recall 
$$
\Omega_r(x_0) = \Omega \cap B_r(x_0).
$$

\begin{lemma}
                        \label{lem6.4}
Let $q \in (1,\infty)$, $\mu, \nu \in (1,\infty)$, $1/\mu + 1/\nu = 1$, and $\kappa \ge 64$.
Then, under Assumptions \ref{assum1004} and \ref{assum0711_1} ($\rho$) such that $\rho \kappa \le 1/4$, for any $r \in (0,R_1/\kappa]$, $x_0 \in \overline{\Omega}$, and
$$
(u,p) \in W_{q\mu}^1(\Omega_{\kappa r}(x_0))^d \times L_{q\mu}(\Omega_{\kappa r}(x_0))
$$
satisfying
\begin{equation}
							\label{eq11.33}
\begin{cases}
\cL u + \nabla p = D_\alpha f_\alpha
\quad
&\text{in}\,\,\Omega_{\kappa r}(x_0),
\\
\operatorname{div} u = g
\quad
&\text{in}\,\,\Omega_{\kappa r}(x_0),
\\
u = 0
\quad
&\text{on} \,\, \partial \Omega \cap B_{\kappa r}(x_0),
\end{cases}
\end{equation}
where $f_\alpha \in L_q(\Omega_{\kappa r}(x_0))$, there exists a $(d^2+1)$-dimensional vector-valued function $\cU$ on $\Omega_{\kappa r}(x_0)$ such that, on $\Omega_{\kappa r}(x_0)$,
\begin{equation}
							\label{eq0919_01}
N^{-1}(|Du|+|p|) \le |\cU| \le N (|Du|+|p|),
\end{equation}
and
\begin{align}
							\label{eq3.33}	
&\left( |\cU - (\cU)_{\Omega_r(x_0)}|\right)_{\Omega_r(x_0)}
\le N (\kappa^{-\frac 1 2}+\kappa\rho) \left( |Du|^q + |p|^q \right)_{\Omega_{\kappa r}(x_0)}^{\frac{1}{q}}
\nonumber\\
&\quad + N \kappa^{\frac d q} \rho^{\frac{1}{q\nu}} \left( |Du|^{q\mu} +|p|^{q\mu} \right)_{\Omega_{\kappa r}(x_0)}^{\frac{1}{q\mu}}
+ N \kappa^{\frac d q} \left(|f_\alpha|^q\right)_{\Omega_{\kappa r}(x_0)}^{\frac{1}{q}}+N \kappa^{\frac d q} \left(|g|^q\right)_{\Omega_{\kappa r}(x_0)}^{\frac{1}{q}},
\end{align}
where $N=N(d,\delta,\mu, q)$.
\end{lemma}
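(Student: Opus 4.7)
The plan is to mirror the decomposition argument in Lemma \ref{lem0311_1}, handling the boundary via Assumption \ref{assum0711_1}(ii), and to split into two cases according to the distance from $x_0$ to $\partial\Omega$. If $B_{\kappa r}(x_0)\subset\Omega$ (interior case), then $\Omega_s(x_0)=B_s(x_0)$ for $s\le\kappa r$, and Lemma \ref{lem0311_1} applied directly yields \eqref{eq3.33} since $\kappa^{-1/2}\le\kappa^{-1/2}+\kappa\rho$ and adding $|p|^q$ into the right-hand side only weakens the bound. All the real work is in the boundary case: choose $y_0\in\partial\Omega$ with $|x_0-y_0|\le\kappa r$ and apply Assumption \ref{assum0711_1}(ii) at $y_0$ at scale $2\kappa r$ (after shrinking $R_1$ by a fixed factor if necessary) to get, after translating $y_0$ to the origin,
\[
\{x_1>\rho\kappa r\}\cap B_{2\kappa r}\subset \Omega_{2\kappa r}\subset \{x_1>-\rho\kappa r\}\cap B_{2\kappa r},
\]
together with the oscillation bound \eqref{eq0307_01} for the $x'$-averaged coefficients $\bar A^{\alpha\beta}(x_1)$; let $\cL_0$ be the associated constant-in-$x'$ operator.

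Next I introduce a smooth cutoff $\chi(x_1)$ equal to $0$ for $x_1\le\rho\kappa r$, equal to $1$ for $x_1\ge 2\rho\kappa r$, with $|\chi'|\le 2(\rho\kappa r)^{-1}$, and set $\hat u=\chi u$, which vanishes on $\{x_1=\rho\kappa r\}$. On the half-ball $\widetilde B:=B_{\kappa r}\cap\{x_1>\rho\kappa r\}$ the pair $(\hat u,p)$ satisfies
\[
\cL_0\hat u+\nabla p=D_\alpha\bigl(f_\alpha+(\bar A^{\alpha\beta}-A^{\alpha\beta})D_\beta u+\bar A^{\alpha\beta}D_\beta((\chi-1)u)\bigr),\quad \operatorname{div}\hat u=\chi g+\chi' u_1.
\]
Apply Lemma \ref{lem0918_2} on $\widetilde B$ to solve for $(w,p_1)$ with zero flat-face trace, right-hand side equal to the above $D_\alpha(\cdots)$ piece, and divergence $\chi g+\chi'u_1$ minus its mean. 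The $L_q$-estimate from Lemma \ref{lem0918_2} combined with \eqref{eq0307_01} and the Hardy/H\"older bounds in the proof of Proposition \ref{prop0719_1} (cf.\ \eqref{eq0328_06}--\eqref{eq0328_07}) yields, with $1/\mu+1/\nu=1$,
\[
(|Dw|^q+|p_1|^q)_{\widetilde B}^{1/q}\le N\rho^{\frac 1{q\nu}}\bigl(|Du|^{q\mu}+|p|^{q\mu}\bigr)_{\Omega_{\kappa r}(x_0)}^{\frac 1{q\mu}}+N(|f_\alpha|^q+|g|^q)_{\Omega_{\kappa r}(x_0)}^{\frac 1 q},
\]
the $|p|^{q\mu}$ contribution coming from extending $p_1$ by $p$ below $\{x_1=\rho\kappa r\}$.

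Then set $(v,p_2)=(\hat u-w,\hat p-p_1)$ where $\hat p$ is the same extension of $p_1$ described above. The pair $(v,p_2)$ solves the homogeneous system $\cL_0 v+\nabla p_2=0$, $\operatorname{div} v=$ constant, $v=0$ on $\{x_1=\rho\kappa r\}\cap B_{\kappa r}$, so Lemma \ref{lem0829_3}(ii) (after translating the flat face and rescaling) gives
\[
[D_{x'}v]_{C^{1/2}(\widetilde B\cap B_{\kappa r/2})}+[V]_{C^{1/2}(\widetilde B\cap B_{\kappa r/2})}\le N(\kappa r)^{-(d+1)/2}\|Dv\|_{L_1(\widetilde B)}.
\]
Define $\cU$ as in \eqref{eq10.40} with the $\bar A^{\alpha\beta}$, and use the decomposition $u=(1-\chi)u+v+w$, $p=(1-\chi)p+p_2+p_1$ to split $\cU=\cU_{(1-\chi)u,(1-\chi)p}+\cU_v+\cU_w$. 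The triangle inequality produces three pieces: the $v$-piece contributes the $\kappa^{-1/2}$ term via the H\"older bound above (the factor $(2r)^{1/2}$ cancels with $(\kappa r)^{-1/2}$); the $w$-piece contributes the $\rho^{1/(q\nu)}$ and the $f_\alpha$, $g$ data terms from the estimate above; and the $(1-\chi)(u,p)$-piece is supported in the strip $\Omega_r(x_0)\cap\{x_1\le 2\rho\kappa r\}$, whose volume is at most $N\rho\kappa\,|\Omega_r(x_0)|$, so H\"older's inequality on this thin strip yields the $\kappa\rho(|Du|^q+|p|^q)^{1/q}$ term. The bound \eqref{eq0919_01} follows directly from the definitions in \eqref{eq10.40} and the boundedness and ellipticity of $\bar A^{\alpha\beta}$.

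\textbf{Main obstacle.} The most delicate point is producing the linear $\kappa\rho$ factor (rather than only $\rho^{1/(q\nu)}$) in front of $(|Du|^q+|p|^q)^{1/q}$. This forces me to keep $(1-\chi)(u,p)$ as its own summand of $\cU$ instead of absorbing it into the $w$-solve: its $L_1$-average over $\Omega_r(x_0)$ is then bounded by (strip volume$/|\Omega_r(x_0)|$) times a local $L_q$-norm of $\cU$, and the volume ratio is $\lesssim\rho\kappa$. The hypothesis $\rho\kappa\le 1/4$ is used precisely here: it guarantees that this strip occupies at most a fixed fraction of $\Omega_r(x_0)$ and that $\widetilde B$ fits inside $\Omega_{\kappa r}(x_0)$, so that Lemma \ref{lem0918_2} indeed controls $(w,p_1)$ by data supported in $\Omega_{\kappa r}(x_0)$. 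A secondary bookkeeping point is that the half-space problem is centered at $y_0$ rather than $x_0$, but the comparisons $|\Omega_s(x_0)|\asymp|\Omega_s(y_0)|$ for $s\asymp\kappa r$ (as in \eqref{eq0719_04}) absorb this into the constants.
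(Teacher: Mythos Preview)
Your overall architecture matches the paper's: split into interior/boundary cases, in the boundary case flatten via Assumption~\ref{assum0711_1}(ii), introduce the cutoff $\chi$, solve for a correction $(\hat w,\hat p_1)$ on the flat half-ball via Lemma~\ref{lem0918_2}, and estimate the remaining homogeneous piece by Lemma~\ref{lem0829_3}. The Hardy/H\"older bounds you quote for the $\rho^{1/(q\nu)}$ terms are exactly the paper's \eqref{eq0329_01}--\eqref{eq0328_007}.

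There is, however, a genuine gap in how you extract the linear factor $\kappa\rho$. You propose to keep $(1-\chi)(u,p)$ as its own summand and bound its $L_1$-average over $\Omega_r(x_0)$ by ``(strip volume$/|\Omega_r(x_0)|$) times a local $L_q$-norm.'' H\"older's inequality on a set of relative measure $\lesssim\kappa\rho$ only gives
\[
\big(|\cU_{(1-\chi)(u,p)}|\big)_{\Omega_r(x_0)}\le (\kappa\rho)^{1-1/q}\big(|Du|^q+|p|^q\big)_{\Omega_r(x_0)}^{1/q},
\]
not the claimed $\kappa\rho\,(\cdots)^{1/q}$; you have no $L_\infty$ control on $Du$ or $p$, so you cannot trade the full strip measure for an $L_\infty$ norm. (A related issue: the decomposition $p=(1-\chi)p+p_1+p_2$ is not well defined, since $p$ is never multiplied by $\chi$; on the transition zone $\{\rho\kappa r<x_1<2\rho\kappa r\}$ it fails.) The paper obtains the sharp $\kappa\rho$ differently: it absorbs $(1-\chi)u$ into $w$ and extends $p_1=p$ below $\{x_1=\rho R\}$, so that the homogeneous pair $(v,p_2)$ vanishes identically on the thin set $\mathcal D_1=\Omega_r(x_0)\cap\{x_1\le\rho R\}$. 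Then the oscillation of $(D_{x'}v,V)$ over $\Omega_r(x_0)=\mathcal D_1\cup\mathcal D_2$ picks up a jump term of size $\lesssim(|\mathcal D_1|/|\Omega_r(x_0)|)\,\|(D_{x'}v,V)\|_{L_\infty(\mathcal D_2)}\lesssim\kappa\rho\,\|(D_{x'}v,V)\|_{L_\infty(\mathcal D_2)}$, and the $L_\infty$ bound is available precisely because $(v,p_2)$ solves the homogeneous constant-in-$x'$ system (Lemma~\ref{lem0829_3}). This is the content of \eqref{eq5.01}.

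Your weaker factor $(\kappa\rho)^{1-1/q}$ would still suffice for the downstream application in Theorem~\ref{thm2} (one first fixes $\kappa$ large, then $\rho$ small), but it does not prove \eqref{eq3.33} as stated. To close the gap, follow the paper: put $(1-\chi)u$ into $w$, extend $p_1$ by $p$ below the flat face, and read off the $\kappa\rho$ term from the $L_\infty$ estimate of the homogeneous piece.
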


\begin{proof}
Let $\tilde x\in \partial\Omega$ be such that $|x_0 -\tilde x|=\text{dist}(x_0,\partial\Omega)$.
We consider two cases.

\noindent{\bf Case 1: $|x_0 -\tilde x|\ge \kappa r/16$.} In this case, we have 
$$
\Omega_r(x_0)=B_r(x_0)\subset B_{\kappa r/16}(x_0)\subset \Omega.
$$
Since $\kappa/16\ge 4$, \eqref{eq3.33} follows from Lemma \ref{lem0311_1}.

\noindent{\bf Case 2: $|x_0 -\tilde x|< \kappa r/16$.} In this case, the proof is similar to that of Proposition \ref{prop0719_1}. Without loss of generality, one may assume that $\tilde x$ is the origin. Note that
\begin{equation*}
\Omega_r(x_0)\subset \Omega_{\kappa r/4} \subset \Omega_{\kappa r/2}
\subset \Omega_{\kappa r}(x_0).
\end{equation*}
Denote $R=\kappa r/2 \, (\le R_1/2)$.
Due to Assumption \ref{assum0711_1}, we can perform an orthogonal transformation to obtain
$$
 \{(x_1, x'):\rho R< x_1\}\cap B_R
 \subset\Omega\cap B_R
 \subset \{(x_1, x'):-\rho R<x_1\}\cap B_R
$$
and
\begin{equation}
                                \label{eq17_50}
\dashint_{B_R} \left| A^{\alpha\beta}(x_1,x') - \bar A^{\alpha\beta}(x_1)\right| \, dx \le \rho,
\end{equation}
where
\begin{equation}
							\label{eq0715_001}
\bar A^{\alpha\beta}(x_1)=\dashint_{B'_R} A^{\alpha\beta}(x_1,x')\,dx'.
\end{equation}
Take a smooth function $\chi$ on $\bR$ such that
$$
\chi(x_1)\equiv 0\quad\text{for}\,\,x_1\le \rho R,
\quad \chi(x_1)\equiv 1\quad\text{for}\,\,x_1\ge  2\rho R,\quad
\text{and}
\quad
|\chi'| \le 2(\rho R)^{-1}.
$$
Denote $\cL_0$ to be the elliptic operator with the coefficients $\bar A^{\alpha\beta}$ from \eqref{eq0715_001}.
Let $\hat u=\chi u$, which vanishes on $B_R\cap \{x_1\le \rho R\}$. From \eqref{eq11.33}, it is easily seen that $(\hat u,p)$ satisfies
\begin{equation}
                                    \label{eq17.23b0}
\begin{cases}
\cL_0 \hat u+\nabla p = D_\alpha (\tilde f_\alpha+h_\alpha)
\quad
&\text{in}\,\,B_{R}\cap\{x_1>\rho R\},
\\
\operatorname{div}  \hat u = \chi g+\chi' u_1
\quad
&\text{in}\,\,B_{R}\cap\{x_1>\rho R\},
\\
\hat u = 0
\quad
&\text{on} \,\, B_{R}\cap \{x_1=\rho R\},
\end{cases}
\end{equation}
where
$$
\tilde f_\alpha=f_\alpha+(\bar A^{\alpha\beta}- A^{\alpha\beta})D_\beta  u\quad
\text{and}
\quad
h_\alpha=\bar A^{\alpha\beta} D^{\beta}((\chi-1) u).
$$
For $\tau \in [0,\infty)$, set
$$
\widetilde{B}_r^+(\tau,0) = B_r(\tau,0) \cap \{x_1 > \tau\},
$$
where $0 \in \bR^{d-1}$.
Since $\rho\in (0,1/16)$, we have
$$
\Omega_{R/2} \subset \Omega_{3R/4}(\rho R,0)\quad
\text{and}
\quad
\widetilde{B}^+_{3R/4}(\rho R,0) \subset \widetilde{B}^+_{7R/8}(\rho R,0) \subset B_{R}\cap\{x_1>\rho R\}.
$$
By Lemma \ref{lem0918_2}, there exists
$$
(\hat w, \hat{p}_1)\in W^{1}_q(\widetilde{B}^+_{3R/4}(\rho R,0))^d \times L_q(\widetilde{B}^+_{3R/4}(\rho R,0))
$$
satisfying
\begin{equation}
							\label{eq0714_003}
\begin{cases}
\cL_0 \hat w+\nabla \hat{p}_1 = D_\alpha (\tilde f_\alpha+h_\alpha)
\quad
&\text{in}\,\,\widetilde{B}^+_{3R/4}(\rho R,0),
\\
\operatorname{div} \hat w = \chi g+\chi' u_1-\big(\chi g+\chi' u_1\big)_{\widetilde{B}^+_{3R/4}(\rho R,0)}
\quad
&\text{in}\,\,\widetilde{B}^+_{3R/4}(\rho R,0),
\\
\hat w = 0
\quad
&\text{on} \,\, B_{3R/4}(\rho R,0) \cap \{x_1 = \rho R\},
\end{cases}
\end{equation}
and
\begin{align}
                                \label{eq3.001}
&\|D\hat w\|_{L_q(\widetilde{B}^+_{3R/4}(\rho R,0))} + \|\hat{p}_1\|_{L_q(\widetilde{B}^+_{3R/4}(\rho R,0))}
\nonumber\\
&\le N\|\tilde f_\alpha\|_{L_q(\cD)}
+N\|h_\alpha\|_{L_q(\cD)}
+N\|\chi g+\chi' u\|_{L_q(\cD)}
\nonumber\\
&\le N\|f_\alpha\|_{L_q(\cD)}
+N\|(\bar A^{\alpha\beta}- A^{\alpha\beta})D_\beta  u\|_{L_q(\cD)}
\nonumber\\
&\quad +N\|D((\chi-1)u)\|_{L_q(\cD)}
+N\|g\|_{L_q(\cD)}+N\|\chi' u\|_{L_q(\cD)},
\end{align}
where $\cD = \widetilde{B}^+_{7R/8}(\rho R,0)$ and $N = N(d,\delta,q)$.
Using the fact that $|A^{\alpha\beta}| \le \delta^{-1}$ and $\cD \subset B_R$ together with \eqref{eq17_50} and H\"older's inequality, it follows that
\begin{equation}
							\label{eq0329_01}
\|(\bar A^{\alpha\beta}- A^{\alpha\beta})D_\beta  u\|_{L_q(\cD)}\le N\rho^{\frac 1 {q\nu}}R^{\frac d {q\nu}}
\|Du\|_{L_{q\mu}(\cD)}.
\end{equation}
Since $\chi-1$ is supported on $\{x_1\le 2\rho R\}$, H\"older's inequality implies that
\begin{equation}
							\label{eq0328_006}
\|(\chi-1)Du\|_{L_q(\cD)} \le N\rho^{\frac 1 {q\nu}}R^{\frac d {q\nu}}
\|Du\|_{L_{q\mu}(\Omega_{R})}.
\end{equation}
Using H\"older's inequality again, together with the fact that $\chi'$ is supported on $\{\rho R \le x_1 \le 2\rho R\}$, we have
\begin{equation}
							\label{eq0328_007}
\|\chi' u\|_{L_q(\cD)}
\le N \rho^{\frac{1}{q\nu}} R^{\frac{d}{q\nu}}
\|\chi' u\|_{L_{q\mu}(\cD)} \le N \rho^{\frac{1}{q\nu}} R^{\frac{d}{q\nu}} \|Du\|_{L_{q\mu}(\Omega_R)}.
\end{equation}
Note that in the last inequality above we used Hardy's inequality, the boundary condition $u = 0$ on $\partial\Omega$, and the observation that
$$
|\chi'| \le N(x_1 - \phi(x'))^{-1}
$$
for $(x_1,x') \in \Omega_R$, where $\phi(x')$ is the largest number such that $(\phi(x'),x') \in \partial \Omega$.
The inequalities \eqref{eq0329_01}, \eqref{eq0328_006}, and \eqref{eq0328_007}, together with \eqref{eq3.001}, imply that
\begin{equation}
            \label{eq21.52h0}
(|D \hat w|^q + |\hat{p}_1|^q)_{\widetilde{B}^+_{3R/4}(\rho R,0)}^{\frac 1 q}
\le N\rho^{\frac 1 {q\nu}} (|Du|^{q\mu})_{\Omega_R}^{\frac 1 {q\mu}}+
N(|f_\alpha|^q+|g|^q)_{\Omega_{R}}^{\frac 1 q}.
\end{equation}
We extend $\hat w$ to be zero  in $\Omega_{3R/4}(\rho R,0)\cap \{x_1<\rho R\}$, so that $\hat w\in W^{1}_2(\Omega_{3R/4}(\rho R,0))$, and we let
$$
w=\hat w+(1-\chi)u.
$$
We also set
$$
p_1 =
\begin{cases}
\hat{p}_1 \quad &\text{in} \,\,\tilde{B}^+_{3R/4}(\rho R,0),
\\
p \quad &\text{in} \,\, \Omega_{3R/4}(\rho R, 0) \cap \{x_1 \rho R\}.
\end{cases}
$$
By the same reasoning as in \eqref{eq0328_006} and \eqref{eq0328_007}, we have 
$$
\|D\left((1-\chi) u\right)\|_{L_q(\Omega_{3R/4}(\rho R,0))} \le N \rho^{\frac{1}{q\nu}} R^{\frac{d}{q \nu}} \|Du\|_{L_{q\mu}(\Omega_R)},
$$
and
$$
\|p_1\|_{L_q(\Omega_{3R/4}(\rho R,0))} \le \|\hat{p}_1\|_{L_q(\tilde{B}^+_{3R/4}(\rho R,0))} + N \rho^{\frac{1}{q\nu}} R^{\frac{d}{q\nu}} \|p\|_{L_{q\mu}(\Omega_R)}.
$$
From these inequalities and \eqref{eq21.52h0}, we deduce that
\begin{equation}
            \label{eq18.340h}
(|Dw|^q + |p_1|^q)_{\Omega_{3R/4}(\rho R,0)}^{\frac 1 q}
\le N\rho^{\frac 1 {q\nu}} (|Du|^{q\mu} + |p|^{q\mu})_{\Omega_R}^{\frac 1  {q\mu}}+
N(|f_\alpha|^q+|g|^q)_{\Omega_R}^{\frac 1 q}.
\end{equation}
Note that, because $\kappa \rho\le 1/4$, it holds that
$$
\Omega_r(x_0) \subset \Omega_{3R/4}(\rho R,0)\quad \text{and} \quad |\Omega_{3R/4}(\rho R,0)|/|\Omega_r(x_0)|
\le N(d) \kappa^{d}.
$$
Thus, from \eqref{eq18.340h} we also obtain that
\begin{equation}
                                \label{eq28_01}
(|Dw|^q + |p_1|^q)_{\Omega_r(x_0)}^{\frac 1 q}
\le N\kappa^{\frac d q}\rho^{\frac 1 {q\nu}} (|Du|^{q\mu} + |p|^{q\mu})_{\Omega_R}^{\frac 1 {q\mu}}+N \kappa^{\frac d q}
(|f_\alpha|^q+|g|^q)_{\Omega_R}^{\frac 1 q}.
\end{equation}

Next, we set $v= u- w$ $(=\chi u-\hat w)$ and $p_2=p-p_1$ in $\Omega_{3R/4}(\rho R,0)$.
From \eqref{eq17.23b0} and \eqref{eq0714_003}, it is easily seen that $(v,p_2)$ satisfies
\begin{equation*}
\begin{cases}
\cL_0 v+\nabla p_2 = 0
\quad
&\text{in}\,\,\widetilde{B}^+_{3R/4}(\rho R,0),
\\
\operatorname{div} v = \big(\chi g+\chi' u_1\big)_{\widetilde{B}^+_{3R/4}(\rho R,0)}
\quad
&\text{in}\,\,\widetilde{B}^+_{3R/4}(\rho R,0),
\\
v = 0
\quad
&\text{on} \,\, B_{3R/4}(\rho R,0)\cap \{x_1=\rho R\}.
\end{cases}
\end{equation*}
Denote
$$
\cD_1=\Omega_{r}(x_0)\cap \{x_1\le \rho R\},
\quad
\cD_2=\Omega_{r}(x_0)\cap \{x_1> \rho R\},
\quad\text{and}\,\,
\cD_3=\widetilde{B}^+_{R/4}(\rho R,0).
$$
We see that $\cD_2\subset \cD_3$ and $|\cD_1|\le N\kappa\rho|\Omega_{r}(x_0)|$, where the latter follows from the fact that $\cD_1 = \Omega_r(x_0) \cap \{ - \rho R \le x_1 \le \rho R\}$.
We set
$$
V_1 = \sum_{j=1}^d \sum_{\beta=1}^d \bar{A}_{1j}^{1\beta} D_\beta v_j + p_2,
\quad
V_i = \sum_{j=1}^d\sum_{\beta = 1}^d \bar{A}_{ij}^{1\beta}D_\beta v_j,
\quad
i = 2, \ldots, d,
$$
where the coefficients $\bar{A}^{1\beta}(x_1)$ are taken from \eqref{eq0715_001}.
Note that $v=V=0$ in $\cD_1$.
Then, by applying Lemma \ref{lem0829_3},
we get 
\begin{align}
							\label{eq5.01}
&\big(|V-(V)_{\Omega_r(x_0)}|\big)_{\Omega_r(x_0)}
+\big(|D_{x'}v-(D_{x'}v)_{\Omega_r(x_0)}|\big)_{\Omega_r(x_0)}
\nonumber\\
&\le N r^{\frac 1 2}\big([V]_{C^{1/2}(\cD_2)}
+[D_{x'}v]_{C^{1/2}( \cD_2)}\big)
+N\kappa\rho \left(\|V\|_{L_\infty(\cD_2)} +\|D_{x'}v\|_{L_\infty(\cD_2)} \right)
\nonumber\\
&\le N r^{\frac 1 2}\big([V]_{C^{1/2}(\cD_3)}
+[D_{x'}v]_{C^{1/2}( \cD_3)}\big)
+N\kappa\rho \left(\|V\|_{L_\infty(\cD_3)} +\|D_{x'}v\|_{L_\infty(\cD_3)} \right)
\nonumber\\
&\le N(\kappa^{- \frac 1 2}+\kappa \rho)(|Dv|^q + |p_2|^q)_{\tilde{B}^+_{R/2}(\rho R,0)}^{\frac 1 q}.
\end{align}

Now, we define $\cU=(D_{x'}u,\operatorname{div} u, U_1, U_2,\ldots,U_d)$ as in \eqref{eq10.40}.
Note that $\cU$ satisfies \eqref{eq0919_01}.
From the triangle inequality and \eqref{eq5.01}, we have 
\begin{multline*}
\big(|\cU-(\cU)_{\Omega_r(x_0)}|\big)_{\Omega_r(x_0)} \le N\big(|D_{x'}v-(D_{x'}v)_{\Omega_r(x_0)}|\big)_{\Omega_r(x_0)}
\\
+ N \big(|V-(V)_{\Omega_r(x_0)}|\big)_{\Omega_r(x_0)}
+N(|g| + |Dw| + |p_1|)_{\Omega_r(x_0)}
\\
\le N(\kappa^{-\frac 1 2}+\kappa \rho)(|Dv|^q + |p_2|^q)_{\tilde{B}^+_{R/2}(\rho R, 0)}^{\frac 1 q}
%\\
+N(|g|^q  + |Dw|^q + |p_1|^q)^{\frac 1 q}_{\Omega_r(x_0)}.
\end{multline*}
To estimate the terms on the right-hand side of the last inequality above, we note the following.
First, by the fact that $u= w+v$ and $p=p_1+p_2$, and  \eqref{eq18.340h} we have 
$$
( |Dv|^q + |p_2|^q)^{\frac 1 q}_{\tilde{B}^+_{R/2}(\rho R, 0)} \le ( |Du|^q + |p|^q )^{\frac 1 q}_{\tilde{B}^+_{R/2}(\rho R, 0)} + ( |Dw|^q + |p_1|^q)^{\frac 1 q}_{\tilde{B}^+_{R/2}(\rho R, 0)}
$$
$$
\le N( |Du|^q + |p|^q )^{\frac 1 q}_{\Omega_R} + N\rho^{\frac 1 {q\nu}} (|Du|^{q\mu} + |p|^{q\mu})_{\Omega_R}^{\frac 1  {q\mu}}+
N(|f_\alpha|^q+|g|^q)_{\Omega_R}^{\frac 1 q}.
$$
Now, we use \eqref{eq28_01} and the fact that $|\Omega_R| \le N \kappa^d |\Omega_r(x_0)|$ by the condition $\kappa \rho \le 1/4$ to obtain 
$$
(|g|^q + |Dw|^q + |p_1|^q)^{\frac 1 q}_{\Omega_r(x_0)} \le N \kappa^{\frac d q} \rho^{\frac 1 {q\nu}} (|Du|^{q\mu} + |p|^{q\mu})_{\Omega_R}^{\frac 1 {q\mu}} + N \kappa^{\frac d q} \left( |f_\alpha|^q + |g|^q \right)_{\Omega_R}^{\frac 1 q}.
$$
Combining the inequalities above, we get
$$
\big(|\cU-(\cU)_{\Omega_r(x_0)}|\big)_{\Omega_r(x_0)} \le N(\kappa^{- \frac 1 2} + \kappa \rho) (|Du|^q + |p|^q)^{\frac 1 q}_{\Omega_R}
$$
$$
+ N \kappa^{\frac d q} \rho^{\frac 1 {q\nu}} (|Du|^{q\mu} + |p|^{q\mu})^{\frac 1 {q\mu}}_{\Omega_R}
+ N \kappa^{\frac d q} \left( |f_\alpha|^q + |g|^q \right)^{\frac 1 q}_{\Omega_R}.
$$
Since $\Omega_R = \Omega_{\kappa r/2} \subset \Omega_{\kappa r}(x_0)$ and the volumes of these two sets are comparable, we finally obtain \eqref{eq3.33}. The lemma is proved.
\end{proof}

\subsection{Proofs of Theorems \ref{thm3} and \ref{thm2}}

First we note that a bounded Reifenberg flat domain is a space of homogeneous type, see \cite[Remark 7.3]{DK15}, which is endowed with the Euclidean distance and a doubling measure $\mu$ that is naturally inherited from the Lebesgue measure. From a result in Christ \cite[Theorem 11]{MR1096400}, there exists a filtration of partitions of $\Omega$ in the following sense. For each $n\in\bZ$, there exists a collection of disjoint open subsets $\bC_n:=\{Q_\alpha^n\,:\,\alpha\in I_n\}$ for some index set $I_n$, satisfying the following properties:
\begin{enumerate}
\item[(1)] For any $n\in \bZ$, $\mu(\Omega\setminus \bigcup_{\alpha}Q_\alpha^n)=0$;
\item[(2)] For each $n$ and $\alpha\in I_n$, there exists a unique $\beta\in I_{n-1}$ such that $Q_\alpha^n\subset Q_\beta^{n-1}$;
\item[(3)] For each $n$ and $\alpha\in I_n$, $\text{diam}(Q_\alpha^n)\le N_0\delta_0^n$;
\item[(4)] Each $Q_\alpha^n$ contains some ball $B_{\varepsilon_0\delta_0^n}(z_\alpha^n)$;
\end{enumerate}
for some constants $\delta_0\in (0,1)$, $\varepsilon_0>0$, and $N_0$ depending only on $d$, $R_0$, and $K$.

We also use the following filtration of partitions of $\bR^d$:
$$
\bC_n := \{ C_n = C_n(i_1, \ldots, i_d): (i_1, \ldots, i_d) \in \bZ^d \},
$$
where $n \in \bZ$ and
$$
C_n(i_1, \ldots, i_d) = [i_1 2^{-n}, (i_1 + 1) 2^{-n}).
$$
For a filtration of partitions of $\bR^d_+$, we replace $i_1 \in \bZ$ by $i_1 \in \{0,1,2,\ldots,\}$.

The following lemma is proved in \cite[Theorem 2.4]{DK15}, which is in the spirit of \cite[Theorem 2.7]{MR2540989}.

\begin{lemma}							\label{lem6.5}
Let $q \in (1, \infty)$, $\omega\in A_q$, and $\Omega$ be either $\bR^d$, $\bR^{d}_+$, or a bounded Reifenberg flat domain as in Theorem \ref{thm2}.
Suppose that
$$
\ff,\fg,\fh\in L_{q,\omega}(\Omega),\quad |\ff| \le \fh,
$$
and for each $n \in \bZ$ and $Q \in \bC_n$,
there exists a measurable function $\ff^Q$ on $Q$
such that $|\ff| \le \ff^Q \le \fh$ on $Q$ and
\begin{equation}							 \label{eq10.53}
\dashint_Q |\ff^Q(x) - \left(\ff^Q\right)_Q| \,dx
\le N_0\fg(y)\quad \forall \,y\in Q
\end{equation}
for some constant $N_0>0$.

(i) When $\Omega=\bR^d$ or $\bR^d_+$, we have
$$
\|\ff\|^{q}_{L_{q,\omega}}\le NN_0\|\fg\|^\beta_{L_{q,\omega}}\|\fh \|_{L_{q,\omega}}^{q-\beta},
$$
where $\beta\in (0,1]$ and  $N>0$ are constants depending only on $d$, $q$, and $[\omega]_{A_q}$.

(ii) Otherwise, we have
\begin{equation*}
%                            \label{eq1.51b}
\|\ff\|^{q}_{L_{q,\omega}}\le NN_0\|\fg\|^\beta_{L_{q,\omega}}\|\fh \|_{L_{q,\omega}}^{q-\beta}+
N_1\|\fh\|^{q}_{L_1},
\end{equation*}
where $\beta\in (0,1]$ and  $N>0$ are constants depending only on $d$, $q$, $R_0$, $K$, and $[\omega]_{A_q}$, and $N_1$ depends only on $\omega(\Omega)$ and the same parameters as $N$.
\end{lemma}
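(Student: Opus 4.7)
The plan is to adapt the Fefferman--Stein sharp function technique to the filtration $\{\bC_n\}$, using the sandwich $|\ff|\le \ff^Q \le \fh$ to upgrade the BMO-type bound \eqref{eq10.53} into a weighted $L_{q,\omega}$-estimate. Write $\ff^*(y) := \sup_n \dashint_{Q_n(y)} |\ff|$ for the dyadic maximal function along the filtration (with $Q_n(y)$ the unique cube in $\bC_n$ containing $y$), and similarly $\fg^*$, $\fh^*$. By the Lebesgue differentiation theorem on the homogeneous space $(\Omega,|\cdot|,dx)$, $|\ff|\le \ff^*\le \fh^*$ almost everywhere, and by the weighted Hardy--Littlewood maximal theorem $\|\fg^*\|_{L_{q,\omega}}\le N\|\fg\|_{L_{q,\omega}}$ and $\|\fh^*\|_{L_{q,\omega}}\le N\|\fh\|_{L_{q,\omega}}$.

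The heart of the proof is a good-$\lambda$ inequality
$$\omega\bigl(\{\ff^*>A\lambda,\ \fg^*\le \gamma\lambda,\ \fh^*\le M\lambda\}\bigr)\le \varepsilon(\gamma,A,M,[\omega]_{A_q})\,\omega\bigl(\{\ff^*>\lambda\}\bigr),$$
with $\varepsilon\to 0$ as $\gamma\to 0$ for fixed $A,M$ satisfying $A>2M$. To prove it, perform a Calder\'on--Zygmund stopping-time decomposition of $\{\ff^*>\lambda\}$ into maximal dyadic cubes $\{Q_i\}\subset\bigcup_n\bC_n$ with $(|\ff|)_{Q_i}\sim\lambda$. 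On any $Q_i$ containing a point $y_0$ with $\fg^*(y_0)\le\gamma\lambda$ and $\fh^*(y_0)\le M\lambda$, hypothesis \eqref{eq10.53} gives $\dashint_{Q_i}|\ff^{Q_i}-(\ff^{Q_i})_{Q_i}|\le N_0\gamma\lambda$, while the sandwich forces $(\ff^{Q_i})_{Q_i}\le (\fh)_{Q_i}\le M\lambda$ and $|\ff|\le \ff^{Q_i}$ pointwise on $Q_i$. Combining these with the weak--$(1,1)$ bound for the dyadic maximal function and the stopping-time control $(|\ff|)_{Q_n(x)}\le\lambda$ for cubes $Q_n(x)\supsetneq Q_i$ yields
$$|\{x\in Q_i:\ff^*(x)>A\lambda\}|\le \frac{4N_0\gamma}{A}|Q_i|.$$
Summing over $i$ and invoking the $A_\infty$ (reverse H\"older) property $\omega(E)\le N(|E|/|Q|)^\theta\omega(Q)$ for $E\subset Q$, with $\theta=\theta(d,q,[\omega]_{A_q})>0$, promotes this to the weighted good-$\lambda$ inequality with $\varepsilon\le N(N_0\gamma/A)^\theta$.

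Integrating the good-$\lambda$ bound against $q\lambda^{q-1}\,d\lambda$ via the layer-cake formula (after a truncation $\ff^*\wedge K$ to guarantee finite left-hand sides), and choosing $M$ in terms of $\lambda$ so as to balance the $M^{-q}\|\fh^*\|^q$ tail against the $\gamma^\theta$ gain, one absorbs a copy of $\|\ff^*\wedge K\|_{L_{q,\omega}}^q$ and arrives at
$$\|\ff^*\|_{L_{q,\omega}}^q\le NN_0\|\fg^*\|_{L_{q,\omega}}^\beta\|\fh^*\|_{L_{q,\omega}}^{q-\beta}$$
with $\beta=\beta(d,q,[\omega]_{A_q})\in(0,1]$ determined by $\theta$; letting $K\to\infty$ and using $|\ff|\le \ff^*$ proves (i). For (ii), on a bounded domain the stopping-time argument has a base threshold $\lambda_0\sim \omega(\Omega)^{-1}\|\fh\|_{L^1}$ below which no CZ cube can be strictly contained in $\Omega$; handling $\lambda\le\lambda_0$ separately produces the additive term $N_1\|\fh\|_{L^1}^q$ with $N_1$ depending on $\omega(\Omega)$.

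The main obstacle I anticipate is extracting the interpolation exponent $\beta>0$: a naive absorption yields only $\|\ff^*\|^q\le N(\|\fg^*\|^q+\|\fh^*\|^q)$, and recovering the finer product form requires both the quantitative $A_\infty$-self-improvement of $\omega$ and a delicate $\lambda$-dependent choice of the truncation parameter $M$, in the spirit of \cite{DK15}. A secondary technical point is that $\ff^Q$ depends on the cube $Q$ and has no martingale structure across scales, so the stopping-time argument must rely solely on the single-cube bound \eqref{eq10.53} rather than on any multiscale coherence of the family $\{\ff^Q\}_Q$.
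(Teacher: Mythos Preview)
The paper does not prove this lemma; it cites \cite[Theorem 2.4]{DK15} (cf.\ \cite[Theorem 2.7]{MR2540989}) without further argument, so there is no in-paper proof to compare against. Your good-$\lambda$ outline is the expected route behind those references and the mechanism is correct: on each stopped cube $Q_i$ meeting $\{\fg^*\le\gamma\lambda,\ \fh^*\le M\lambda\}$, the sandwich $|\ff|\le\ff^{Q_i}$ together with $(\ff^{Q_i})_{Q_i}\le(\fh)_{Q_i}\le M\lambda$ reduces the level set $\{\ff^*>A\lambda\}\cap Q_i$ to a weak-$(1,1)$ bound for the local dyadic maximal function of $\ff^{Q_i}-(\ff^{Q_i})_{Q_i}$, which \eqref{eq10.53} controls by $N_0\gamma\lambda$; the $A_\infty$ property of $\omega$ then promotes Lebesgue smallness to $\omega$-smallness. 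You are also right that no coherence of $\{\ff^Q\}$ across scales is needed, and your description of the bounded-domain correction in (ii) is on target. One remark on the exponent $\beta$: replacing $(\fg,N_0)$ by $(s\fg,N_0/s)$ leaves the hypothesis unchanged but scales the stated right-hand side by $s^{\beta-1}$, so a literal reading forces $\beta=1$; in the paper's applications one has $\fh\sim\ff$, and the inequality collapses to $\|\ff\|_{L_{q,\omega}}\le N\|\fg\|_{L_{q,\omega}}$ regardless of $\beta$, so you need not chase the sharp product form via a $\lambda$-dependent $M$---fixed parameters with $\gamma$ small enough for absorption (governed by the $A_\infty$ exponent $\theta$) already deliver what is used downstream.
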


\begin{proof}[Proof of Theorem \ref{thm3}]
We only treat the case when $\Omega = \bR^d_+$, as the whole space case is similar.
By the reverse H\"older's inequality for $A_q$ weights, we have $\omega\in A_{\tilde q}$ for some $\tilde q\in (1,q)$ which depends on $d$, $q$, and $[\omega]_{A_q}$. Let $q_0=q/\tilde q\in (1,q)$. Then clearly, for any function $f\in L_{q,\omega}(\Omega)$, we have $|f|^{q_0}\in L_{\tilde q,\omega}(\Omega)$, and by Lemma 3.1 of \cite{DK15S}, $f\in L_{q_0,\text{loc}}(\Omega)$.

For $x \in \bR^d_+$ and $C_n \in \bC_n$ such that $x \in C_n$, find $x_0 \in \bR^d_+$ and the smallest $r \in (0,\infty)$ (indeed, $r = 2^{-n-1}\sqrt{d}$) satisfying
$C_n \subset B_r(x_0)$ and
\begin{equation}
							\label{eq0311_03}
\dashint_{C_n} | h(x) - (h)_{C_n} | \, dx \le N(d) \dashint_{B_r(x_0)}| h(x) - (h)_{B_r(x_0)} | \, dx.
\end{equation}
Under the conditions of the theorem, Assumption \ref{assum0711_1} is satisfied with $\rho=0$ and $R_1 = \infty$.
Since $(u,p) \in W_{q_0}^1(\Omega_{\kappa r}^+(x_0))^d \times L_{q_0}(B_{\kappa r}^+(x_0))$, it follows from Lemma \ref{lem6.4} that for any $\kappa \ge 64$ and $r>0$, \eqref{eq3.33} holds with $q$ replaced by $q_0$ and $\rho=0$, where $\cU$ is a function defined on $C_n$ and satisfies \eqref{eq0919_01}.
Clearly, each term in the right-hand side of \eqref{eq3.33} is bounded by its maximal function at $x$.
From this and \eqref{eq0311_03}, we have 
\begin{equation*}
\left( |\cU - (\cU)_{C_n}| \right)_{C_n}
\le N \kappa^{-\frac 1 2} \left( \cM (|Du|^{q_0}+|p|^{q_0}) (x) \right)^{\frac 1{q_0}}
+ N \kappa^{\frac d {q_0}}
\left( \cM (F^{q_0})(x) \right)^{\frac 1 {q_0}}
\end{equation*}
for $x \in C_n$ and $\kappa \ge 64$, where $N=N(d,\delta,q_0)$ and $F=|f_\alpha|+|g|$.
Recall the inequality \eqref{eq0919_01}.
Applying Lemma 6.5 (i) with $\ff=N^{-1}(|Du|+|p|)$, $\fh=N(|Du|+|p|)$, and
$$
\fg=\kappa^{-\frac 1 2} \left( \cM (|Du|^{q_0}+|p|^{q_0}) (x) \right)^{\frac 1{q_0}}
+ \kappa^{\frac d {q_0}}
\left( \cM (F^{q_0})(x) \right)^{\frac 1 {q_0}},
$$
and using the maximal function theorem with $A_p$ weights (see, for instance, \cite{MR0740173}) on the above pointwise estimate, we obtain 
\begin{align*}
&\|Du\|_{L_{q,\omega}} + \|p\|_{L_{q,\omega}}\\
&\le N \kappa^{-\frac 1 2} \left\|\left( \cM (|Du|^{q_0}+|p|^{q_0}) \right)^{\frac 1 {q_0}}\right\|_{L_{q,\omega}}+ N \kappa^{\frac d {q_0}}
\left\|\left( \cM (F^{q_0}) \right)^{\frac 1 {q_0}}\right\|_{L_{q,\omega}}\\
&= N \kappa^{-\frac 1 2} \left\|\cM (|Du|^{q_0}+|p|^{q_0})  \right\|_{L_{\tilde q,\omega}}^{\frac 1 {q_0}}+ N \kappa^{\frac d {q_0}}
\left\|\cM (F^{q_0}) \right\|_{L_{\tilde q,\omega}}^{\frac 1 {q_0}}\\
&\le N \kappa^{-\frac 1 2} \left\||Du|^{q_0}+|p|^{q_0}\right\|_{L_{\tilde q,\omega}}^{\frac 1 {q_0}}+ N \kappa^{\frac d {q_0}}
\left\|F^{q_0}\right\|_{L_{\tilde q,\omega}}^{\frac 1 {q_0}}\\
&\le N \kappa^{-\frac 1 2} \left(\|Du\|_{L_{q,\omega}}+\|p\|_{L_{q,\omega}}\right) + N \kappa^{\frac d {q_0}} \left( \|f_\alpha\|_{L_{q,\omega}} + \|g\|_{L_{q,\omega}} \right),
\end{align*}
where $L_{q,\omega} = L_{q,\omega}(\bR^d_+)$ and $N = N(d,\delta,q,[\omega]_{A_q})$.
Upon taking a sufficiently large $\kappa \ge 64$, which depends only on $d$, $\delta$, $q$, and $[\omega]_{A_q}$ such that $N \kappa^{-1/2} \le 1/2$, we arrive at
\eqref{eq0307_03}. The theorem is proved.
\end{proof}

\begin{proof}[Proof of Theorem \ref{thm2}]
As in the proof of Theorem \ref{thm3}, we have $\omega\in A_{\tilde q}$ for some $\tilde q\in (1,q)$ which depends on $d$, $q$, $R_0$, $K$, and $[\omega]_{A_q}$. Let $q_0=\mu=(q/\tilde q)^{1/2}\in (1,q^{1/2})$. Then, for any function $f\in L_{q,\omega}(\Omega)$, we have
$$
|f|^{q_0\mu}\in L_{\tilde q,\omega}(\Omega),\quad
|f|^{q_0}\in L_{\tilde q\mu,\omega}(\Omega),
$$
and by Lemma 3.1 of \cite{DK15S}, $f\in L_{q_0\mu}(\Omega)$. We first prove the a priori estimate \eqref{eq11.32}.
Let $(u,p) \in  W_{q,\omega}^1(\Omega)^d \times L_{q,\omega}(\Omega)$ be a solution to \eqref{eq11.28} satisfying $(p)_\Omega=0$. Then, $(u,p) \in  W_{q_0\mu}^1(\Omega)^d \times L_{q_0\mu}(\Omega)$.

By the properties (3) and (4) stated before Lemma \ref{lem6.5}, for each $Q\in \bC_n$ in the partitions,
there exist $r \in (0,\infty)$ and $x_0 \in \bar\Omega$ such that
\begin{equation}
                                \label{eq11.23}
Q \subset \Omega_r(x_0)\quad\text{and}
\quad |\Omega_r(x_0)| \le N |Q|,
\end{equation}
where $N$ depends on $d$, $R_0$, and $K$.
To apply Lemma \ref{lem6.5} (ii), we take 
$$
\ff=N^{-1}(|Du|+|p|),\quad \fh=N(|Du|+|p|),
$$ 
where $N=N(d, \delta, \mu, q_0) = N(d,\delta, R_0, K, q, [\omega]_{A_q})$, and
\begin{align*}
\fg(y) &= (\kappa^{-\frac 1 2} + \kappa \rho) \left[ \cM(|Du|^{q_0}+|p|^{q_0})(y)\right]^{\frac{1}{q_0}}
+ \kappa^{\frac{d}{q_0}} \left[ \cM(F^{q_0})(y)\right]^{\frac{1}{q_0}}\\
&\quad + \kappa^{\frac{d}{q_0}} \rho^{\frac{1}{q_0\nu}} \left[ \cM (|Du|^{q_0\mu}
+|p|^{q_0\mu})(y) \right]^{\frac{1}{q_0\mu}}+R_1^{-d}\kappa^{d}\left\||Du|+|p|\right\|_{L_1(\Omega)},
\end{align*}
where $F=|f_\alpha|+|g|$.
For $\ff^Q$, we consider two cases. When $\kappa r\le R_1$, we choose $\ff^Q=\cU$, where $\cU$ is from Lemma \ref{lem6.4}.
Thanks to \eqref{eq0919_01},  \eqref{eq3.33} with $q$ replaced by $q_0$, and \eqref{eq11.23}, we have that \eqref{eq10.53} holds. Otherwise, i.e., if $r>R_1/\kappa$, we take $\ff^Q=|Du|+|p|$. Then, by \eqref{eq11.23}, we have 
\begin{align*}
\dashint_Q |\ff^Q(x) - \left(\ff^Q\right)_Q| \,dx
&\le N\dashint_{\Omega_r(x_0)} |Du|+|p| \,dx\\
&\le N|\Omega_{R_1/\kappa}(x_0)|^{-1}\left\||Du|+|p|\right\|_{L_1(\Omega)}.
\end{align*}
Since $|\Omega_{R_1/\kappa}(x_0)|^{-1}\le NR_1^{-d}\kappa^{d}$, we still get that \eqref{eq10.53} holds. Therefore, the conditions in Lemma \ref{lem6.5} are satisfied, which yields
\begin{align*}
&\|Du\|_{L_{q,\omega}}+\|p\|_{L_{q,\omega}}
\le N\|\fg\|_{L_{q,\omega}}+N_1\left\||Du|+|p|\right\|_{L_1}\\
&\le N_1 R_1^{-d}\kappa^{d}\left\||Du|+|p|\right\|_{L_1}+N(\kappa^{-\frac 1 2} + \kappa \rho)
\|\cM(|Du|^{q_0}+|p|^{q_0})^{\frac{1}{q_0}}\|_{L_{q,\omega}}\\
&\quad +N\kappa^{\frac{d}{q_0}}\|\cM(F^{q_0})^{\frac{1}{q_0}}\|_{L_{q,\omega}}
+N\kappa^{\frac{d}{q_0}} \rho^{\frac{1}{q_0\nu}}
\|\cM(|Du|^{q_0\mu}+|p|^{q_0\mu})^{\frac{1}{q_0\mu}}\|_{L_{q,\omega}}\\
&= N_1 R_1^{-d}\kappa^{d}\||Du|+|p|\|_{L_1}+N(\kappa^{-\frac 1 2} + \kappa \rho)
\|\cM(|Du|^{q_0}+|p|^{q_0})\|_{L_{q/q_0,\omega}}^{\frac{1}{q_0}}\\
&\quad +N\kappa^{\frac{d}{q_0}}\|\cM(F^{q_0})\|^{\frac{1}{q_0}}_{L_{q/q_0,\omega}}
+N\kappa^{\frac{d}{q_0}} \rho^{\frac{1}{q_0\nu}}
\|\cM(|Du|^{q_0\mu}+|p|^{q_0\mu})\|^{\frac{1}{q_0\mu}}_{L_{q/{q_0\mu},\omega}},
\end{align*}
where $N = N(d, \delta, R_0, K, q, [\omega]_{A_q})$ and $N_1 = N(d, \delta,R_0, K, q, [\omega]_{A_q}, \omega(\Omega))$.
By the weighted maximal function theorem (see, for instance, \cite{MR0740173}), the right-hand side above is bounded by
\begin{align*}
&N_1 R_1^{-d}\kappa^{d}\||Du|+|p|\|_{L_1}+N(\kappa^{-\frac 1 2} + \kappa \rho)
\||Du|^{q_0}+|p|^{q_0}\|_{L_{q/q_0,\omega}}^{\frac{1}{q_0}}\\
&\quad +N\kappa^{\frac{d}{q_0}}\|F^{q_0}\|^{\frac{1}{q_0}}_{L_{q/q_0}}
+N\kappa^{\frac{d}{q_0}} \rho^{\frac{1}{q_0\nu}}
\||Du|^{q_0\mu}+|p|^{q_0\mu}\|^{\frac{1}{q_0\mu}}_{L_{q/{q_0\mu}}}\\
&=N_1 R_1^{-d}\kappa^{d}\||Du|+|p|\|_{L_1}+N(\kappa^{-\frac 1 2} + \kappa \rho)
\||Du|+|p|\|_{L_{q,\omega}}\\
&\quad +N\kappa^{\frac{d}{q_0}}\|F\|_{L_{q,\omega}}
+N\kappa^{\frac{d}{q_0}} \rho^{\frac{1}{q_0\nu}}
\||Du|+|p|\|_{L_{q,\omega}},
\end{align*}
where $N=N(d,\delta, R_0,K,q,[\omega]_{A_q})>0$ and $N_1 = N_1(d,\delta,R_0,K, q, [\omega]_{A_q}, \omega(\Omega)) > 0$.
Upon taking a sufficiently large $\kappa$, then a sufficiently small $\rho$, depending on $d$, $\delta$, $R_0$, $K$, $q$, and $[\omega]_{A_q}$, so that
$$
N(\kappa^{-\frac 1 2} + \kappa \rho)+N\kappa^{\frac{d}{q_0}} \rho^{\frac{1}{q_0\nu}}\le 1/2,
$$
we get 
\begin{equation}
							\label{eq0328_04}
\|Du\|_{L_{q,\omega}(\Omega)}\le N\|Du\|_{L_1(\Omega)}+N\|f_\alpha\|_{L_{q,\omega}(\Omega)}+N\|g\|_{L_{q,\omega}(\Omega)},
\end{equation}
where $N=N(d,\delta,R_0, R_1,K, q, [\omega]_{A_q}, \omega(\Omega))>0$. To bound the first term on the right-hand side, we notice that since $\Omega$ is bounded,  $f_\alpha,g\in L_{q_0}(\Omega)$ and
$$
\|f_\alpha\|_{L_{q_0}(\Omega)}\le N\|f_\alpha\|_{L_{q,\omega}(\Omega)},\quad
\|g\|_{L_{q_0}(\Omega)}\le N\|g\|_{L_{q,\omega}(\Omega)}.
$$
Thus, by Theorem \ref{thm1},
$$
\|Du\|_{L_{q_0}(\Omega)} + \|p\|_{L_{q_0}(\Omega)} \le N \|f_\alpha\|_{L_{q,\omega}(\Omega)} +N \|g\|_{L_{q,\omega}(\Omega)},
$$
where $N=N(d,\delta, R_0,R_1, K, q_0)$, provided that $\rho$ is sufficiently small depending only on $d$, $\delta$, $R_0$, $K$, and $q_0$. Combining this with \eqref{eq0328_04} yields the desired estimate \eqref{eq11.32}.

Next, we prove the solvability. By reverse H\"{o}lder's inequality for $A_p$ weights in spaces of homogeneous type proved in \cite[Theorem 3.2]{MS1981}, we have $\omega\in L_{1+\varepsilon}(\Omega)$, where $\varepsilon>0$ depends only on $d$, $R_0$, $K$, $q$, and $[\omega]_{A_q}$. Set $q_1=q/\varepsilon+q$. Then, by H\"older's inequality, it is easily seen that for any $f\in L_{q_1}(\Omega)$, we have $f\in L_{q,\omega}(\Omega)$ and
\begin{equation*}
                %                \label{eq1.44}
\|f\|_{L_{q,\omega}(\Omega)}\le N\|f\|_{L_{q_1}(\Omega)},
\end{equation*}
where $N=N(d,R_0,K,q,[\omega]_{A_q}, \omega(\Omega))>0$.
For $k>0$, define $f^k_\alpha$ and $g^k$ as in \eqref{eq1.46}. Since $\Omega$ is bounded, we have
$$
f^k_\alpha, \, g^k\in L_{\infty}(\Omega)\subset L_{q_1}(\Omega).
$$
Then, by Theorem \ref{thm1}, there is a unique solution $(u_k,p_k)\in W^1_{q_1}(\Omega)^d\times L_{q_1}(\Omega)$ satisfying $(p_k)_\Omega=0$ and \eqref{eq1.48}. Thus, by the reasoning above, we have $(u_k,p_k) \in  W_{q,\omega}^1(\Omega)^d \times L_{q,\omega}(\Omega)$.
As $f_\alpha,g\in L_{q_0}(\Omega)$, we have $|f_\alpha|,|g|<\infty$ a.e. in $\Omega$. By the dominated convergence theorem, it is easily seen that
$$
f^k_\alpha\to f_\alpha,\quad g^k\to g\quad \text{in}\quad L_{q,\omega}(\Omega)
$$
as $k\to \infty$. Moreover, $(g^k)_\Omega \to 0$ as $k\to\infty$. Therefore, by \eqref{eq11.32}, $\{(u_k,p_k)\}$ is a Cauchy sequence in $\mathring W_{q,\omega}^1(\Omega)^d \times L_{q,\omega}(\Omega)$. Let $(u,p)\in \mathring W_{q,\omega}^1(\Omega)^d \times L_{q,\omega}(\Omega)$ be the limit of the sequence. Following the last part of the proof of Theorem \ref{thm1}, we see that $(u,p)$ satisfies \eqref{eq11.28} and $(p)_\Omega=0$.
Finally, the uniqueness follows from the a priori estimate \eqref{eq11.32}. The theorem is proved.
\end{proof}

%%%%%%%%%%%%%%%%%%%%%%%%
\bibliographystyle{plain}

\begin{thebibliography}{10}

\bibitem{AGZ11}
H. Abidi, G. Gui, and P. Zhang.
\newblock On the decay and stability of global solutions to the 3D inhomogeneous Navier-Stokes equations.
\newblock {\em Comm. Pure Appl. Math.},64(6):832--881, 2011.

\bibitem{MR2263708}
Gabriel Acosta, Ricardo~G. Dur{\'a}n, and Mar{\'{\i}}a~A. Muschietti.
\newblock Solutions of the divergence operator on {J}ohn domains.
\newblock {\em Adv. Math.}, 206(2):373--401, 2006.

\bibitem{MR0740173}
Hugo Aimar and Roberto~A. Mac{\'{\i}}as.
\newblock Weighted norm inequalities for the {H}ardy-{L}ittlewood maximal
  operator on spaces of homogeneous type.
\newblock {\em Proc. Amer. Math. Soc.}, 91(2):213--216, 1984.

\bibitem{AHMNT2014}
Jonas Azzam, Steve Hofmann, Jos\'{e}~Mar\'{i}a Martell, Kaj Nystr\"{o}m, and
  Tatiana Toro.
\newblock A new characterization of chord-arc domains.
\newblock {\em arXiv:1406.2743}.

\bibitem{BMR07}
M. Bul{\'{\i}}{\v{c}}ek, J. M{\'a}lek, and K. R. Rajagopal.
\newblock Navier's slip and evolutionary Navier-Stokes-like systems with pressure and shear-rate dependent viscosity.
\newblock {\em Indiana Univ. Math. J.}, 56(1):51--85, 2007.

\bibitem{BS16}
Sun-Sig Byun and Hyoungsuk So.
\newblock Weighted estimates for generalized steady {S}tokes systems in nonsmooth domains.
\newblock {\em arXiv:1609.03290}.


\bibitem{CL15}
Jongkeun Choi and Ki-Ahm Lee.
\newblock The green function for the stokes system with measurable
  coefficients.
\newblock {\em arXiv:1503.07290}.

\bibitem{MR1096400}
Michael Christ.
\newblock A {$T(b)$} theorem with remarks on analytic capacity and the {C}auchy
  integral.
\newblock {\em Colloq. Math.}, 60/61(2):601--628, 1990.

\bibitem{DM04}
M. Dindo{\v{s}} and M. Mitrea.
\newblock The stationary Navier-Stokes system in nonsmooth manifolds: the Poisson problem in Lipschitz and $C^1$ domains.
\newblock {\em Arch. Ration. Mech. Anal.} 174(1):1--47, 2004.

\bibitem{DK15S}
Hongjie Dong and Doyoon Kim.
\newblock {$L_q$}-estimates for stationary stokes system with coefficients
  measurable in one direction.
\newblock {\em arXiv:1604.02690}.

\bibitem{DK15}
Hongjie Dong and Doyoon Kim.
\newblock On {$L_p$}-estimates for elliptic and parabolic equations with
  {$A_p$} weights.
\newblock {\em Trans. Amer. Math. Soc. (to appear), arXiv:1603.07844}.

\bibitem{MR2835999}
Hongjie Dong and Doyoon Kim.
\newblock Higher order elliptic and parabolic systems with variably partially
  {BMO} coefficients in regular and irregular domains.
\newblock {\em J. Funct. Anal.}, 261(11):3279--3327, 2011.

\bibitem{MR2771670}
Hongjie Dong and Doyoon Kim.
\newblock On the {$L_p$}-solvability of higher order parabolic and elliptic
  systems with {BMO} coefficients.
\newblock {\em Arch. Ration. Mech. Anal.}, 199(3):889--941, 2011.

\bibitem{MR2800569}
Hongjie Dong and Doyoon Kim.
\newblock Parabolic and elliptic systems in divergence form with variably
  partially {BMO} coefficients.
\newblock {\em SIAM J. Math. Anal.}, 43(3):1075--1098, 2011.

\bibitem{MR3266252}
Hongjie Dong and Doyoon Kim.
\newblock On the impossibility of {$W_p^2$} estimates for elliptic equations
  with piecewise constant coefficients.
\newblock {\em J. Funct. Anal.}, 267(10):3963--3974, 2014.

\bibitem{MR975121}
E.~B. Fabes, C.~E. Kenig, and G.~C. Verchota.
\newblock The {D}irichlet problem for the {S}tokes system on {L}ipschitz
  domains.
\newblock {\em Duke Math. J.}, 57(3):769--793, 1988.

\bibitem{FMR05} M. Franta, J. M{\'a}lek, K. R. Rajagopal.
\newblock On steady flows of fluids with pressure- and shear-dependent viscosities.
\newblock {\em Proc. R. Soc. Lond. Ser. A Math. Phys. Eng. Sci.}, 461(2055):651--670, 2005.

\bibitem{MR1313554}
G.~P. Galdi, C.~G. Simader, and H.~Sohr.
\newblock On the {S}tokes problem in {L}ipschitz domains.
\newblock {\em Ann. Mat. Pura Appl. (4)}, 167:147--163, 1994.

\bibitem{MR641818}
M.~Giaquinta and G.~Modica.
\newblock Nonlinear systems of the type of the stationary {N}avier-{S}tokes
  system.
\newblock {\em J. Reine Angew. Math.}, 330:173--214, 1982.

\bibitem{MR717034}
Mariano Giaquinta.
\newblock {\em Multiple integrals in the calculus of variations and nonlinear
  elliptic systems}, volume 105 of {\em Annals of Mathematics Studies}.
\newblock Princeton University Press, Princeton, NJ, 1983.

\bibitem{MR1331981}
David Jerison and Carlos~E. Kenig.
\newblock The inhomogeneous {D}irichlet problem in {L}ipschitz domains.
\newblock {\em J. Funct. Anal.}, 130(1):161--219, 1995.

\bibitem{MR1446617}
Carlos~E. Kenig and Tatiana Toro.
\newblock Harmonic measure on locally flat domains.
\newblock {\em Duke Math. J.}, 87(3):509--551, 1997.

\bibitem{MR2540989}
N.~V. Krylov.
\newblock Second-order elliptic equations with variably partially {VMO}
  coefficients.
\newblock {\em J. Funct. Anal.}, 257(6):1695--1712, 2009.

\bibitem{MR563790}
N.~V. Krylov and M.~V. Safonov.
\newblock A property of the solutions of parabolic equations with measurable
  coefficients.
\newblock {\em Izv. Akad. Nauk SSSR Ser. Mat.}, 44(1):161--175, 239, 1980.

\bibitem{LS75}
O. A. Lady{\v{z}}enskaya  and V. A. Solonnikov.
\newblock The unique solvability of an initial-boundary value problem for viscous incompressible inhomogeneous fluids.
\newblock Boundary value problems of mathematical physics, and related questions of the theory of functions, 8.
\newblock {\em Zap. Nau\v cn. Sem. Leningrad. Otdel. Mat. Inst. Steklov. (LOMI)}, 52:52--109, 218--219,  (1975).

\bibitem{Li96}
Pierre-Louis Lions,
\newblock {\em Mathematical topics in fluid mechanics. Vol. 1.} Incompressible models.
\newblock  Oxford Lecture Series in Mathematics and its Applications, 3. Oxford Science Publications. The Clarendon Press, Oxford University Press, New York, 1996. xiv+237 pp.

\bibitem{MS1981}
Roberto~A. Mac{\'{\i}}as and Carlos~A. Segovia.
\newblock {\em A well behaved quasi-distance for spaces of homogeneous type},
  volume~32 of {\em Trabajos de Matem{\'a}tica}.
\newblock Inst. Argentino Mat., 1981.

\bibitem{MR0159110}
Norman~G. Meyers.
\newblock An {$L\sp{p}$}-estimate for the gradient of solutions of second order
  elliptic divergence equations.
\newblock {\em Ann. Scuola Norm. Sup. Pisa (3)}, 17:189--206, 1963.

\bibitem{MR2135732}
Roberto Monti and Daniele Morbidelli.
\newblock Regular domains in homogeneous groups.
\newblock {\em Trans. Amer. Math. Soc.}, 357(8):2975--3011, 2005.

\bibitem{MR579490}
M.~V. Safonov.
\newblock Harnack's inequality for elliptic equations and {H}\"older property
  of their solutions.
\newblock {\em Zap. Nauchn. Sem. Leningrad. Otdel. Mat. Inst. Steklov. (LOMI)},
  96:272--287, 312, 1980.
\newblock Boundary value problems of mathematical physics and related questions
  in the theory of functions, 12.

\end{thebibliography}

\def\cprime{$'$}

%%%%%%%%%%%%%%%%%%%%%%%%

\end{document}